\documentclass[reqno]{amsart}
\usepackage[foot]{amsaddr}

\usepackage{caption}
\usepackage[hidelinks]{hyperref}
\usepackage{url}
\usepackage[capitalize,nameinlink]{cleveref}
\usepackage{xcolor}

\usepackage{amssymb}
\usepackage{amscd}
\usepackage{amsthm}
\usepackage{setspace}
\usepackage{enumerate}
\usepackage{graphicx}
\usepackage{mathtools}
\usepackage{tcolorbox} 
\usepackage{siunitx}
\usepackage{tikz-cd}
\usepackage{bm,blkarray}
\numberwithin{equation}{section}
\usepackage{algorithm}
\usepackage{algpseudocode}
\usepackage{subcaption}

\usepackage{mathtools}
\usepackage[tableposition=top]{caption}
\usepackage{booktabs,dcolumn}

\usepackage{pgfplots}
\pgfplotsset{compat=1.17}

\renewcommand\P{\mathbb{P}}

\newcommand\R{\mathbb{R}}

\newcommand{\rank}{\mathop{\rm rank}}

\usepackage{xcolor}      
\usepackage{listings}    

\definecolor{codegreen}{rgb}{0,0.6,0}
\definecolor{codegray}{rgb}{0.5,0.5,0.5}
\definecolor{codepurple}{rgb}{0.58,0,0.82}
\definecolor{white}{rgb}{1,1,1}
\definecolor{juliablue}{rgb}{0.2,0.4,0.8}
\definecolor{juliared}{rgb}{0.8,0.2,0.2}

\lstdefinestyle{juliasstyle}{
    backgroundcolor=\color{white},   
    commentstyle=\color{codegreen},
    keywordstyle=\color{juliablue}\bfseries,
    numberstyle=\tiny\color{codegray},
    stringstyle=\color{codepurple},
    basicstyle=\ttfamily\footnotesize,
    breakatwhitespace=false,         
    breaklines=true,                 
    captionpos=b,                    
    keepspaces=true,                 
    numbers=left,                    
    numbersep=5pt,                  
    showspaces=false,                
    showstringspaces=false,
    showtabs=false,                  
    tabsize=2,
    frame=single,
    rulecolor=\color{black},
    title=\lstname,
    morekeywords={abstract, break, case, catch, const, continue, do, else, elseif, end, export, false, for, function, global, if, import, in, let, macro, module, quote, return, struct, true, try, type, using, while},
    sensitive=true
}

\title[LU Factorization of Discrete Random Matrices]{LU Factorization of Discrete Random Matrices}
\author{Samuel Orellana Mateo}
\address{Department of Mathematics, Duke University, Durham, North Carolina, 27708 USA.}
\email{samuel.orellanamateo@duke.edu}
\author{John Urschel}
\address{Department of Mathematics, Massachusetts Institute of Technology, Cambridge, MA, 02139 USA.}
\email{urschel@mit.edu}
\author{Nicholas West}
\email{npwest00@mit.edu}
\subjclass[2020]{15A23, 15B52, 65F05}
\keywords{Gaussian elimination, growth factor, random matrices}

\newtheorem{theorem}{Theorem}[section]
\newtheorem{definition}[theorem]{Definition}
\newtheorem{lemma}[theorem]{Lemma}

\newtheorem{open}[theorem]{Open Problem}

\newtheorem{corollary}[theorem]{Corollary}

\newtheorem{remark}[theorem]{Remark}

\begin{document}

\begin{abstract}
 We consider the probability that a discrete random matrix $M_n(\xi)$ is \emph{strongly non-singular}, meaning all its leading principal submatrices are non-singular. This property is equivalent to the existence of an LU factorization. We show that for any discrete random variable $\xi$ with finite support and $|\xi|_\infty < 1$, there is a constant probability that $M_n(\xi)$ is strongly non-singular with a growth factor bounded by $n^{5/2+\delta}$. Furthermore, we provide a tight asymptotic lower bound for this probability as $|\xi|_\infty \to 0$. Finally, we provide exact counts for strongly non-singular binary matrices up to $n=9$ and use these to derive improved upper bounds for the Bernoulli case. 
\end{abstract}

\maketitle

\section{Introduction}

Gaussian elimination, the process in which a
matrix is factored into the product of a lower and upper triangular matrix, is the oldest and most-used algorithm for solving a general linear system. The resulting factorization $A = LU$, where $L$ is lower unitriangular and $U$ is upper triangular, is unique. However, an LU factorization may not exist if a leading principal minor (excluding the determinant of the matrix itself) of $A$ is zero. In this case, a permutation of the rows and/or columns of $A$ or a suitable pre- and/or post-multiplication is needed, increasing computational costs. In addition, when performing Gaussian elimination in finite precision, round-off error can accumulate. The stability of the Gaussian elimination algorithm in finite precision is governed by the growth factor, defined by
\[ \mathrm{growth}(A) = \max\left\{ \|L\|_{\max}, \frac{\|U\|_{\max}}{\|A\|_{\max}} \right\},\footnote{The exact definition of the growth factor varies by source. The quantity $\max_{k} \|A^{(k)}\|_{\max} /\|A\|_{\max}$ and the pair $\|L\|_\infty$ and $\|U\|_\infty/\|A\|_\infty$ are also popular, with the former implicitly assuming a pivoting strategy that produces bounded entries in $L$. They are all equivalent in that they can all be used to bound the backward error of the LU factorization.} \]
where $\|\cdot\|_{\max}$ is the entrywise matrix infinity norm. A matrix is said to be \emph{strongly non-singular} if all of its leading principal submatrices are non-singular. The existence of an LU factorization without pivoting for an $n \times n$ matrix is equivalent to the leading $(n-1)\times (n-1)$ principal submatrix being strongly non-singular.

In this paper, we consider an average case analysis of Gaussian elimination for discrete matrices. In particular, for a discrete random variable $\xi$, we consider the random matrix $A$ with entries sampled independently from $\xi$ and estimate the probability that $A$ has an LU factorization (essentially, the probability that it is strongly non-singular) and the probability that $A$ has a stable LU factorization in finite precision (i.e., a polynomially bounded $\mathrm{growth}(A)$). This analysis is closely connected to two seemingly disjoint areas of research, the numerical stability of Gaussian elimination and the singularity problem for discrete matrices, which we detail below.

\subsection{Numerical stability of Gaussian elimination} The numerical stability of Gaussian elimination has a long history, beginning, most notably, with the work of von Neumann and Goldstine \cite{von1947numerical} and later the work of Wilkinson \cite{wilkinson1961error,Wilkinson1965AEP}. Wilkinson was the first to recognize how the numerical stability of Gaussian elimination is related to the growth factor, and proved upper bounds for $\mathrm{growth}(A)$ under partial pivoting \cite[pg. 212]{Wilkinson1965AEP} and complete pivoting \cite{wilkinson1961error}. His upper bound of $2^{n-1}$ for partial pivoting is tight, and the large majority of research in this area is devoted to understanding the extent to which exponential growth is rare (see \cite{huang2024average,trefethen2012smart,trefethen1990average} and \cite[Lecture 22]{trefethen2022numerical} for more details regarding the average case and smoothed analysis of Gaussian elimination with partial pivoting). Wilkinson's quasi-polynomial upper bound for the growth factor under complete pivoting was quite pessimistic, but was only recently improved by an exponential constant \cite{bisain2023new}. A tight estimate for the growth factor under complete pivoting remains a major open problem. For more details on partial and complete pivoting, we refer the reader to \cite{urschel72numerical} and \cite[Section 1.1]{edelman2024some}. Significant attention has also been devoted to matrix transformations as an alternative to pivoting, see \cite{demmel2023improved,parker1995random,peca2023growth}.
In contrast, Gaussian elimination without pivoting is significantly easier to analyze. It is mostly understood in average case and smoothed analysis settings \cite{sankar2006smoothed,yeung1997probabilistic}. For instance, Sankar, Spielman, and Teng showed that, for a $n \times n$ Gaussian matrix $A$, $\mathbb{P}\left[\|U\|_\infty \ge M n^{3/2} \|A \|_{\infty} \right] \lesssim 1/M$ \cite[Theorem 4.3]{sankar2006smoothed} and $\mathbb{P}\left[\|L\|_\infty \ge M n^{2} \right] \lesssim 1/M$ \cite[Theorem 4.4]{sankar2006smoothed} (which implies that $\mathbb{P}[\mathrm{growth}(A) \ge  M n^{5/2}] \lesssim 1/M$). In Theorem \ref{thm:main}, we prove that, for discrete random matrices, the bound $\mathrm{growth}(A) \lesssim n^{5/2 + \delta}$ holds with constant probability for any $\delta >0$.

\subsection{Singularity of discrete random matrices} What is the probability that a binary matrix is singular? This folklore question has a long and rich history. Koml{\'o}s was the first to show that this probability was $o_n(1)$ \cite{komlos1967determinant}. A sequence of works by Kahn, Koml{\'o}s, and Szemer{\'e}di \cite{kahn1995probability}, Tao and Vu \cite{tao2005random,tao2007singularity}, and Bourgain, Vu, and Wood \cite{bourgain2010singularity} produced exponentially small bounds, with decreasingly small constants. Tikhomirov proved a tight bound of $(1/2 + o_n(1))^n$ \cite{tikhomirov2020singularity}. However, the stronger conjecture that this probability is $(1+o_n(1))n^2 2^{1-n}$ for a random $\{\pm 1\}$-matrix (see \cite[Equation 1]{kahn1995probability}) remains open. The work of Bourgain, Vu, and Wood also addresses more general classes of discrete random matrices, proving a bound of $(\sqrt{p}+o_n(1))^n$ for matrices with iid random entries $\xi$ with $\mathbb{P}[\xi = a] \le p$ for all $a$. The work of Jain, Sah, and Sawhney \cite{jain2021singularity} produced tight estimates for random matrices with non-uniform iid entries, and non-exact but improved estimates even in the uniform setting. Furthermore, the authors provided not only singularity estimates but bounds on the smallest singular value as well.

As noted above, the existence of an LU factorization is equivalent to all of the (strictly) leading principal submatrices being non-singular. As such, our analysis of the LU factorization of discrete matrices borrows many key results and techniques from this literature. In particular, the techniques of Subsection \ref{sub:n_moderate} are strongly inspired by \cite{komlos1967determinant}, and we make use of \cite[Theorem 1.4]{jain2021singularity} and \cite[Theorem 1.4]{bourgain2010singularity} (see Corollaries \ref{cor:jss} and \ref{cor:p_small}) to handle leading principal minors of arbitrarily large dimension.

\subsection{Our Contributions} In this work, we make three key contributions. For a discrete random variable $\xi$, let $|\xi|_{\infty}:= \max_{a \in \mathbb{R}} \mathbb{P} \left[\xi = a \right]$ and  $M_n(\xi)$ denote a $n \times n$ random matrix with entries independently distributed according to $\xi$. First, we prove that any discrete random matrix $M_n(\xi)$ has a constant probability of having a bounded growth factor:

\begin{theorem}\label{thm:main}
Let $\delta >0$ and $\xi$ be a discrete random variable with finite support and $|\xi|_{\infty}<1$. There exists a fixed constant $C_{\xi}<1$ such that
\[ \mathbb{P}\left[\mathrm{growth}(M_{n}(\xi)) > n^{\frac{5}{2} + \delta} \right] \le C_{\xi} + o_{n}(1).\]
\end{theorem}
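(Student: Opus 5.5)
Write $A=M_n(\xi)$ and $A_k$ for its leading $k\times k$ principal submatrix. We use the standard formulas for the LU factors in terms of minors. The pivots are $u_{kk}=\det A_k/\det A_{k-1}$. More generally, $u_{ij}$ for $i\le j$ and $\ell_{ji}$ for $j>i$ are ratios of a bordered $i\times i$ minor to $\det A_i$ (resp. $\det A_{i-1}$). By Cramer's rule, this gives
\[
\ell_{ji}=\big(A_{j,[1:i]}A_i^{-1}\big)_i,\qquad u_{ij}=a_{ij}-A_{i,[1:i-1]}A_{i-1}^{-1}A_{[1:i-1],j}.
\]
Bounding the growth factor thus comes down to two things: (a) every $A_k$, $k\le n$, is non-singular; (b) $\|A_k^{-1}\|$, or more precisely the action of $A_k^{-1}$ on the relevant random rows and columns, is polynomially controlled. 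The exponent $5/2$ comes from the same bookkeeping as in Sankar--Spielman--Teng: $n^{3/2}$ for $U$ and $n^{2}$ for $L$, combined multiplicatively/maximally. The extra $n^{\delta}$ absorbs the union bound over $k$ and the slack in the discrete singular value estimates. The plan is to split the indices $k$ into three ranges: bounded $k\le K$, moderate $K<k\le k_0(n)$, and large $k>k_0$.

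\textbf{Small $k$.} For $k\le K$, with $K$ a large constant depending on $\xi$, we only need that $A_1,\dots,A_K$ are all non-singular with probability bounded below by a positive constant. This probability does not depend on $n$, and it is positive. One way to see positivity is to exhibit a single realization with all leading minors nonzero. Since $\xi$ takes at least two values $a\ne b$, choose a lower-triangular-dominant pattern, for example $b$ on and below the diagonal and $a$ above it; its leading minors are $(b-a)^{k-1}\cdot(\text{linear in } a,b)$. After possibly swapping the roles of $a$ and $b$ these are nonzero, which handles the degenerate case $b=0$. The complement of this event is what produces the constant $C_\xi<1$. Growth at these bounded dimensions is bounded by a constant depending on $\xi$, since there are finitely many realizations.

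\textbf{Moderate $k$.} For $K<k\le k_0$ we follow the Komlós-style argument of Subsection~\ref{sub:n_moderate}. Conditional on $A_{k-1}$ being non-singular, $A_k$ is obtained by adding a fresh row and column. Expanding $\det A_k$ along the new row and column shows it is a non-degenerate affine/quadratic form in the new entries. Littlewood--Offord anti-concentration then gives $\mathbb{P}[\det A_k=0\mid A_{k-1}]\lesssim k^{-1/2}$, or better using the rank structure. This is not summable by itself, so I would instead bound the probability that $A_k$ is singular directly by the discrete singularity estimates. These are Corollary~\ref{cor:jss}, of the form $(|\xi|_\infty+o(1))^k$, and Corollary~\ref{cor:p_small}. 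Since these are exponentially small, $\sum_{k>K}\mathbb{P}[\det A_k=0]\le \sum_{k>K} (\sqrt{|\xi|_\infty}+\epsilon)^k$, which is small once $K$ is large. The one delicate point is that the $o_k(1)$ terms only become effective for $k$ beyond some threshold. That threshold is exactly where $K$ is chosen.

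\textbf{Large $k$ and the growth bound; main obstacle.} For quantitative growth we need, for each $k$, a bound $s_{\min}(A_k)\ge n^{-1/2-\delta/2}$, or an analogous bound on $\|A_k^{-1}x\|$ for random $x$, outside an event of probability $o(1/n)$ or summably small. For this I would invoke the smallest singular value estimates of Jain--Sah--Sawhney, $\mathbb{P}[s_{\min}(A_k)\le \epsilon/\sqrt{k}]\lesssim \epsilon+e^{-ck}$. Applying them naively with a union bound over $n$ values of $k$ costs a factor of $n$, which is the main obstacle. To overcome it I would use the Sankar--Spielman--Teng structure: $\ell_{ji}$ and $u_{ij}$ depend on $A_i^{-1}$ applied to an independent fresh row or column. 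Controlling this requires only a distance-to-hyperplane estimate, $|u_{ii}|=\mathrm{dist}$ of the new row from the span of the old ones (in the appropriate Schur complement sense), which for discrete vectors has anti-concentration $\lesssim \epsilon+e^{-ck}$. Summing with $\epsilon=n^{-1-\delta/2}$ over $k$, and using the fact that $\|A\|_{\max}\asymp 1$ while the fresh row norms are $\lesssim\sqrt n$, gives $\|U\|_{\max}/\|A\|_{\max}\le n^{3/2+\delta}$ and $\|L\|_{\max}\le n^{5/2+\delta}$ with probability $1-o(1)$ on the non-singularity event. Combining the three ranges yields the bound $C_\xi+o_n(1)$.
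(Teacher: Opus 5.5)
The gap is in how you glue the ranges together. Your good event is $\{A_1,\dots,A_K\text{ non-singular}\}$ intersected with the event that every $A_k$, $K<k\le n$, is non-singular and well conditioned. The only lower bound you have for its probability is $P_K-\sum_{k>K}\mathbb{P}[A_k\text{ singular or ill-conditioned}]$, where $P_K:=\mathbb{P}[A_1,\dots,A_K\text{ non-singular}]$. It is not enough that the tail is ``small once $K$ is large'': $P_K$ also decreases with $K$, and you need the tail to be smaller than $P_K$ itself.

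Your single-realization argument gives only $P_K\ge(\min_{a\in\mathrm{supp}(\xi)}\mathbb{P}[\xi=a])^{K^2}$, and even Lemma \ref{lm:simple} gives only $(1-|\xi|_\infty)^K$. The tail, however, is genuinely exponential. The rates you quote, $((1+\epsilon)|\xi|_\infty)^K$ or $(\sqrt{|\xi|_\infty}+\epsilon)^K$, beat $(1-|\xi|_\infty)^K$ only when $|\xi|_\infty<1/2$ at best. For $\mathrm{Ber}(1/2)$ entries the tail already contains $\mathbb{P}[A_{K+1}\text{ has a zero row}]\approx(K+1)2^{-K-1}$, which exceeds $2^{-K}$ for $K\ge 2$. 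So in the most basic case your lower bound is negative for every $K$. Conditioning on $A_K$ instead does not rescue this, since Corollary \ref{cor:jss} says nothing about matrices with a prescribed top-left block.

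The missing idea is a lower bound on $P_K$ that decays \emph{subexponentially} in $K$, and this is exactly what the Koml\'os-type estimate you discarded provides. It never needs to be summable, because it is used multiplicatively along a chain of conditional probabilities, not in a union bound. Lemma \ref{lm:n_moderate} gives $\mathbb{P}[A_k\text{ singular}\mid A_{k-1}]\le k^{-1/3}$ uniformly over non-singular realizations of $A_{k-1}$. (This also needs the separate fact that $A_{k-1}^{-1}\bm{x}$ is rarely sparse, not just Littlewood--Offord.) Chaining gives $P_{n_1}\ge(1-|\xi|_\infty)^{n_0}\prod_{k=n_0+1}^{n_1}(1-k^{-1/3})\ge(1-|\xi|_\infty)^{n_0}e^{-2n_1^{2/3}}$. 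This beats $\gamma^{n_1}$ for every $\gamma<1$. The paper therefore takes $n_1$ large and then $\hat c$ small, so that the union bound over $k\ge n_1$ from Corollary \ref{cor:jss} (with $\gamma=(1+\epsilon)|\xi|_\infty$ and $t=\hat c\,k^{-1-\delta'}$) falls below $P_{n_1}$.

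Relatedly, the ``main obstacle'' you identify in the large range is not one. The factor lost in that plain union bound over $k$ is exactly where the exponent $5/2$ (rather than $3/2$) comes from. Plugging $\sigma_{\min}(A_k)\ge\hat c\,k^{-3/2-\delta'}$ into Lemma \ref{lm:growth}, $\mathrm{growth}(A)\le 1+\|A\|_{\max}\max_k k\,\sigma_{\min}^{-1}(A_k)$, gives $n^{5/2+\delta'}$ directly. Your Sankar--Spielman--Teng detour through pivots and hyperplane distances is therefore unnecessary. Its $e^{-ck}$ error term, summed from $K$ on, would also run into the same comparison with $P_K$.
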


The exponent of $5/2 + \delta$ is most likely not tight, and experiments suggest that the correct exponent for $\mathrm{growth}(A)$ (for both continuous and discrete random matrices) is $3/2$. Let $\Delta_k(A)$ denote the leading $k \times k$ principal minor of $A$ and $[n]:=\{1,\ldots,n\}$. Recall, an $n \times n$ matrix $A$ is said to be strongly non-singular if $\Delta_k(A) \ne 0$ for all $k \in [n]$. We note that, as a corollary, $M_n(\xi)$ has a constant probability of being strongly non-singular, or, equivalently, $M_{n+1}(\xi)$ has a constant probability of having an LU factorization:

\begin{corollary}\label{corr:main}
Let $\xi$ be a discrete random variable with finite support and $|\xi|_{\infty}<1$. There exists a fixed constant $C'_{\xi}$ such that
\[\mathbb{P}\left[M_n(\xi) \text{ is strongly non-singular}\right] \ge C'_{\xi} >0.\]
\end{corollary}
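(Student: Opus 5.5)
This follows from Theorem \ref{thm:main}. The key observation is that a finite growth factor already forces strong non-singularity, so the event in Theorem \ref{thm:main} is contained in the event we want. The plan is to make that containment precise and then pass from ``large $n$'' to ``all $n$''.

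\textbf{Step 1 (what a finite growth factor means).} The growth factor is defined through the factorization $A = LU$. We adopt the convention that $\mathrm{growth}(A) = \infty$ whenever Gaussian elimination without pivoting breaks down, i.e.\ whenever some pivot $\Delta_k(A)/\Delta_{k-1}(A)$ vanishes for some $k \le n-1$. This convention is implicit in Theorem \ref{thm:main}. So $\mathrm{growth}(A) \le n^{5/2+\delta}$ implies $\Delta_k(A) \neq 0$ for all $k \le n-1$.

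\textbf{Step 2 (the last minor).} For $A$ with an LU factorization we have $\Delta_n(A) = \prod_i U_{ii}$. A finite growth factor alone does not exclude $U_{nn}=0$. We handle this by nesting. Apply Theorem \ref{thm:main} to $M_{n+1}(\xi)$, whose leading $n\times n$ block has the law of $M_n(\xi)$. By Step 1, on the event $\mathrm{growth}(M_{n+1}(\xi)) \le (n+1)^{5/2+\delta}$, the leading minors $\Delta_1,\dots,\Delta_n$ of $M_{n+1}(\xi)$ are all nonzero. Hence $M_n(\xi)$ is strongly non-singular there. Fixing, say, $\delta = 1$, this gives
\[
\mathbb{P}[M_n(\xi) \text{ strongly non-singular}] \ge 1 - C_\xi - o_n(1).
\]
Since $C_\xi<1$, there is an $N$ such that the right-hand side is at least $(1-C_\xi)/2>0$ for all $n\ge N$.

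\textbf{Step 3 (finitely many small $n$).} For $n<N$ we need a positive lower bound for each fixed $n$. Since $|\xi|_\infty<1$, the support of $\xi$ contains two distinct values $a\neq b$. Consider the matrix with $b$ on and below the diagonal and $a$ strictly above it. Subtracting consecutive rows shows that its $k\times k$ leading block has determinant $b(b-a)^{k-1}$. If $b\neq 0$, every such determinant is nonzero. If $b=0$, swap the roles of $a$ and $b$, which gives determinants $a(a-b)^{k-1} \neq 0$. Either way the chosen matrix is strongly non-singular and occurs with probability at least $p_{\min}^{n^2}>0$, where $p_{\min}$ is the smallest atom of $\xi$. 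Taking
\[
C'_\xi = \min\Bigl\{(1-C_\xi)/2,\ \min_{n<N} p_{\min}^{n^2}\Bigr\}
\]
completes the argument.

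The main subtlety is in Steps 1 and 2: checking that the statement of Theorem \ref{thm:main} really controls the non-existence of the LU factorization, via the $\mathrm{growth}=\infty$ convention, and not merely the size of the factors when they exist. The shift from $n$ to $n+1$ is what lets us avoid any separate argument for the determinant itself. The remaining steps are elementary.
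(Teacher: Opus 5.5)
Your argument is correct and is essentially the paper's: the corollary is read off from Theorem \ref{thm:main}, whose proof in fact lower-bounds, uniformly in $n$, the probability that every leading principal minor is non-zero, and your shift from $n$ to $n+1$ is exactly the paper's remark that $M_n(\xi)$ being strongly non-singular is equivalent to $M_{n+1}(\xi)$ having an LU factorization. For the finitely many small $n$, Lemma \ref{lm:simple} already gives the bound $(1-|\xi|_\infty)^n$, which is a slightly quicker substitute for your explicit witness matrix, though both are valid.
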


Of course, as $|\xi|_{\infty}$ tends to zero, the probability of a singular leading principal minor also tends to zero. Second, we give the exact rate at which the singularity probability does so. Let $M_n(p)$ denote an arbitrary $n \times n$ random matrix with independent entries $\xi_{ij}$, where each $\xi_{ij}$ is a discrete random variable, possibly taking infinitely many values, with $|\xi_{ij}|_\infty\le p$ for all $i,j \in [n]$. We prove the following result:

\begin{theorem}\label{thm:p_small}
There exists a constant $\alpha>0$ such that
\[\mathbb{P}\left[M_n(p) \text{ is strongly non-singular} \right] \ge 1 - \frac{5}{3} p - O(p^{1+\alpha})\]
for all $p \in (0,1)$, and this bound is tight. In particular, for $\xi \sim \mathrm{Uniform}(\{2^j\}_{j=0}^{\lfloor1/p\rfloor} \cup \{0\})$,
\[\mathbb{P}\left[M_n(\xi) \text{ is strongly non-singular} \right] = 1 - \frac{5}{3} p - O(p^{1+\alpha}).\]
\end{theorem}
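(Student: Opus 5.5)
Writing $E_k$ for the event $\Delta_k(M_n(p))=0$ (the minor depends only on the leading $k\times k$ block), I would apply a union bound, $\mathbb{P}[\text{not strongly non-singular}]\le\sum_{k=1}^n\mathbb{P}[E_k]$, and split the sum into three ranges of $k$. For $k=1$, $\mathbb{P}[\xi_{11}=0]\le p$. For $k=2$, I condition on $\xi_{11},\xi_{12},\xi_{21}$: the minor $\xi_{11}\xi_{22}-\xi_{12}\xi_{21}$ vanishes only if either $\xi_{11}\neq 0$ and $\xi_{22}$ takes one prescribed value, or $\xi_{11}=0$ and $\xi_{12}\xi_{21}=0$. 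Hence $\mathbb{P}[E_2]\le p+O(p^2)$. A bare union bound would already give $2p+\dots$, which is too weak, so the $k=1$ and $k=2$ terms have to be counted together, together with the contribution of $k=3$. For each small fixed $k$, I would expand $\Delta_k$ along its last row and column: conditional on the leading $(k-1)\times(k-1)$ block $A_{k-1}$ being non-singular, $\Delta_k$ is affine in $\xi_{kk}$ with coefficient $\Delta_{k-1}\neq 0$. So $\mathbb{P}[E_k\cap E_{k-1}^c]\le p$, and the real saving comes from intersecting events. Instead of the union bound, I would write
\[
\mathbb{P}[\text{fail}]=\sum_k \mathbb{P}[E_k\cap E_1^c\cap\dots\cap E_{k-1}^c],
\]
and argue that the main term $p$ for $k=1$, plus $\tfrac23 p$ in total from the remaining steps, appears only through low-dimensional coincidences.

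Concretely, the target coefficient $5/3$ suggests the following accounting. Step $k=1$ contributes $p$. For $k\ge 2$, conditional on non-singularity up to $k-1$, the event $E_k$ requires $\xi_{kk}=c$, where $c$ is determined by the other entries of the leading $k\times k$ block. This has probability at most $p$ in the worst case, but for the bound to be sharp it should typically be much smaller. When $c$ is generic it is unlikely to be an atom of large mass, but an adversary choosing different distributions can concentrate mass $p$ exactly where $c$ tends to land. I would therefore bound $\mathbb{P}[E_k\mid \mathcal{F}]\le \sup_a\mathbb{P}[\xi_{kk}=a]$ only when $c$ is ``structurally forced''. For instance, $c=0$ is forced when the last row or column of the $(k-1)$ part is zero, which has probability $O(p^{k-1})$. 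Generically, I would instead use anti-concentration of the entry $\xi_{kk}$ combined with a decoupling in one further entry, e.g.\ $\xi_{k,k-1}$. Let me be honest that I cannot reconstruct the $5/3$ exactly from this sketch; my guess is that it arises as $p+p/2+p/6+\dots$, or as $p+\tfrac23 p$ from the $k=2,3$ steps via a Cauchy--Schwarz/Halász-type estimate $\mathbb{P}[X=Y]\le\sqrt{\mathbb{P}[X=X']\,\mathbb{P}[Y=Y']}$. Pinning down this constant is the main obstacle, and I would attack it by directly enumerating the atoms for $k\le 3$ in the tightness example.

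For $k$ large I would invoke Corollary~\ref{cor:p_small} (from Bourgain--Vu--Wood), which gives $\mathbb{P}[E_k]\le(\sqrt{p}+o_k(1))^k$. Summed over $k\ge K$ this is $O(p^{1+\alpha})$ once $K$ is a large constant and $p$ is small; for $p$ bounded below the statement is trivial by adjusting the $O$-constant. For moderate $k$ ($4\le k<K$), I would use Komlós-style arguments as in Subsection~\ref{sub:n_moderate} to show $\mathbb{P}[E_k\cap E_{k-1}^c]=O(p^{1+\alpha})$. To get there, I would condition on the first $k-1$ rows being in general position, so that $\xi_{kk}$ and a second free entry both need to hit prescribed values.

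Tightness would then be checked for $\xi$ uniform on $\{0\}\cup\{2^j\}$. In that case only a zero entry, or a coincidence of products of powers of two, can create singularity. Computing the $k=1,2,3$ contributions exactly should give $p+\tfrac23p+O(p^2)$, while all other events are $O(p^{1+\alpha})$ by the upper-bound analysis.
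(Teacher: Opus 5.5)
Your overall architecture matches the paper's: a first-failure decomposition, $p$ from $k=1$, an $O(p^{1+\alpha})$ middle range, and Bourgain--Vu--Wood for the tail. However, the two quantitative inputs that carry the theorem are missing, and the one you sketch for $k\ge 3$ would not work.

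First, as you acknowledge, you do not get $\tfrac23 p$. It comes entirely from the single step $\mathbb{P}[\Delta_2=0,\ \Delta_1\neq0]$, not from $k=3$ and not from correlations between $E_1$ and $E_2$. Decoupling in one further entry cannot produce it: conditioned on $\xi_{11},\xi_{12}$, the event is one linear equation in $(\xi_{21},\xi_{22})$, which can hold with probability of order $p$. The missing idea is additive energy.
\begin{itemize}
\item Up to $O(p^2)$ events involving zero entries, the event is $\xi_{11}\xi_{22}=\xi_{12}\xi_{21}$. With $\eta_{ij}=\log|\xi_{ij}|$ it becomes the additive collision $\eta_{11}+\eta_{22}=\eta_{12}+\eta_{21}$.
\item Cauchy--Schwarz bounds its probability by the geometric mean of $\sum_a\mathbb{P}[\eta_{11}+\eta_{22}=a]^2$ and $\sum_a\mathbb{P}[\eta_{12}+\eta_{21}=a]^2$.
\item A Fourier/H\"older argument (Lemma~\ref{lm:collision_prob}) reduces these to energies of i.i.d.\ copies and shows they are convex in the weights. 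So the worst case has atoms of mass $p$ on $m=\lceil1/p\rceil$ points.
\item An $m$-point set has at most $(2m^2+1)m/3$ additive quadruples (Lemma~\ref{lm:grid}), giving $\tfrac23p+O(p^2)$.
\end{itemize}
Tightness is then the fact that $\log_2$ maps $\{2^j\}$ to an arithmetic progression, which attains this count.

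Second, your plan for $k\ge3$ fails. Conditioned on the first $k-1$ rows, $\Delta_k=0$ is a \emph{single} linear equation in the last row. So it is not true that $\xi_{kk}$ and a second entry must both hit prescribed values. A linear equation in independent variables with atoms $\le p$ can hold with probability $\Theta(p)$. The Koml\'os/Rogozin bounds of Subsection~\ref{sub:n_moderate} decay only in the number of non-zero coefficients, not in $p$. A power saving requires an incidence theorem that uses many entries jointly. The paper does this as follows.
\begin{itemize}
\item It conditions on $\Delta_{k-3}\neq0$ and takes a Schur complement, reducing to $\det C=0$ for a $3\times3$ matrix $C$ with independent entries whose atoms are $\le p$.
\item It bounds the number of singular matrices with no vanishing $2\times2$ minor in a product grid of side $m$ by $O(m^{23/3})$, via the higher-dimensional Elekes--Szab\'o theorem (Theorem~\ref{thm:semi_alg}).
\item This requires checking that the $3\times3$ determinant is not locally of the form $\sum_{i,j}\pi_{ij}(x_{ij})=0$ (Lemma~\ref{lm:det_grid}). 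The $2\times2$ analogue fails precisely because of the logarithm trick above.
\item Treating rare atoms separately gives $O(p^{27/26})$ per step.
\end{itemize}

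Finally, your cutoff $K$ cannot be a constant. For fixed $K$, the tail bound $\sum_{k\ge K}\delta^k$ from Corollary~\ref{cor:p_small} is a positive constant independent of $p$, so it is not $O(p^{1+\alpha})$. You need $K\asymp\log(1/p)$ so that $\delta^K\le p^2$, which makes the middle range $\Theta(\log(1/p))$ long. The per-step bound must therefore be uniform in $k$, and that is exactly what the reduction to a fixed $3\times3$ problem supplies. It gives $O(p^{27/26}\log(1/p))=O(p^{1+\alpha})$ for any $\alpha<1/26$.
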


The tight example of a random variable $\xi$ uniformly sampled from zero and a geometric progression experimentally achieves the theoretical estimate of $1-\tfrac{5}{3} p$ quite quickly (see Figure \ref{fig:unif}). We note that the generality of Theorem \ref{thm:p_small} comes at a cost: because the entries of $M_n(p)$ are only independent and not identical, statements regarding minimum singular values (or, equivalently, growth factors) are not possible without significant restrictions on the behavior of the entries.
\begin{figure}
\centering
\includegraphics[width = 4.5in]{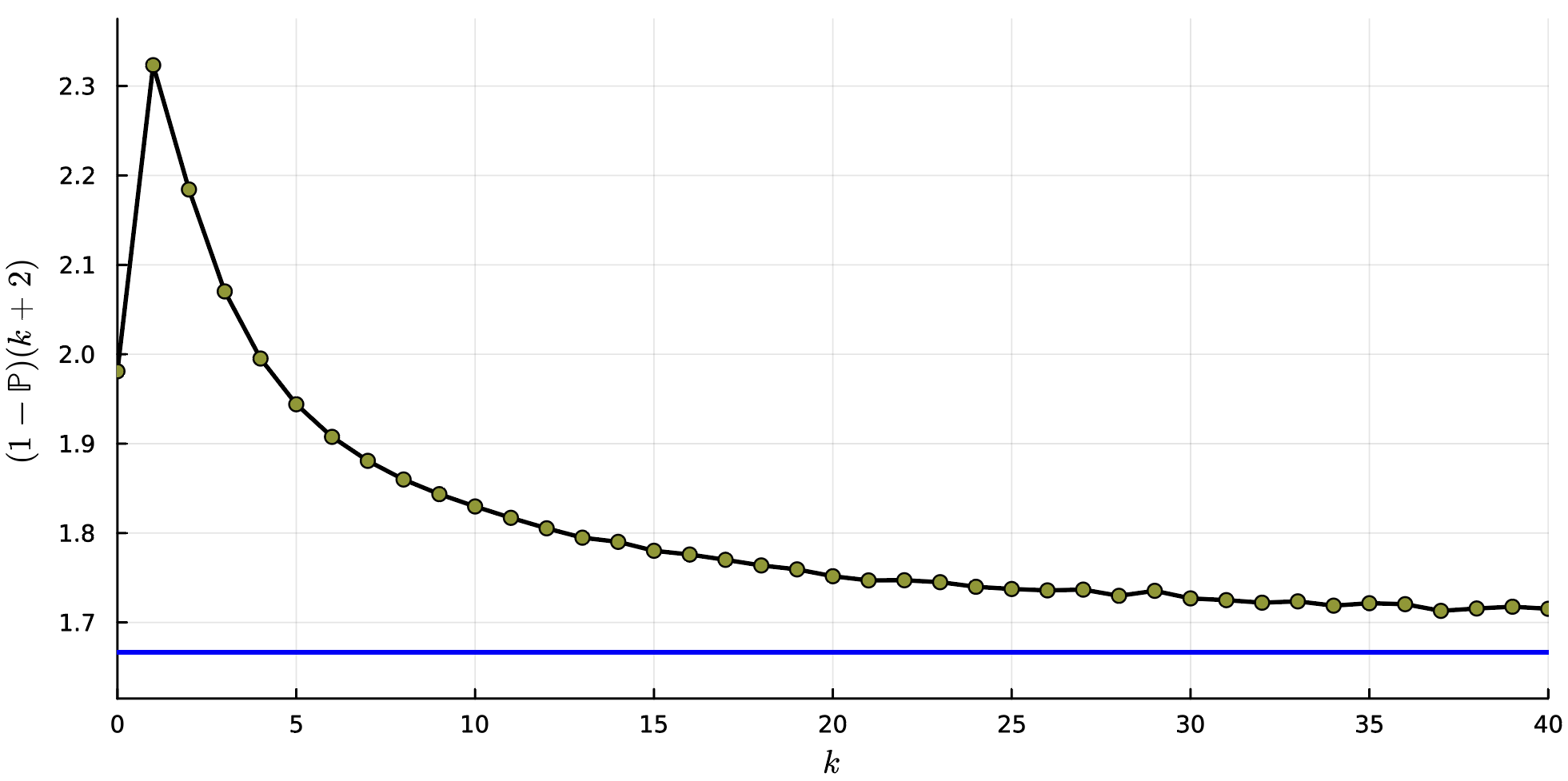}
\caption{The black line is empirical data for $\displaystyle{\frac{1-\mathbb{P}\big[M_n(\xi) \text{ is strongly non-singular}\big]}{|\xi|_\infty}}$, where $\xi \sim \mathrm{Uniform}(\{2^j\}_{j=0}^{k} \cup \{0\})$. The blue line is the value $5/3$, which Theorem \ref{thm:p_small} proves the black line approaches in limit. We see that the theoretical estimate of Theorem \ref{thm:p_small} is quite close to experimental results even for small $k$.}
\label{fig:unif}
\end{figure}

Finally, we pay special attention to the classical case of random $\{0,1\}$-matrices, i.e., $M_n(\xi)$ where $\xi \sim \mathrm{Ber}(p)$ equals $1$ with probability $p$, and $0$ with probability $1-p$. Corollary \ref{corr:main} provides an implicit constant lower bound for this setting. Here we provide exact counts for dimension at most nine, detailed experimental results for larger dimensions (in exact arithmetic), and theoretical upper bounds for sufficiently large dimensions. 

The exact number of strongly non-singular binary matrices, known as sequence OEIS A125587 by the Online Encyclopedia of Integer Sequences, was previously known up to dimension $6$, given by the sequence $$1, 4, 68, 5008, 1603232, 2224232640,$$ with $n = 1,\dots,4$ computed by Sloane and Vaishapayan and $n = 5,6$ computed by McKay \cite{OEISA125587}. We are not aware if the methods used by Sloane, Vaishapayan, and McKay are publicly available or if they differ from an exhaustive search over the $2^{n^2}$ binary matrices. 

In Section \ref{sec:sns}, we derive an algorithm that uses group theoretic and graph theoretic techniques to drastically improve upon exhaustive search; it relies on the construction of canonical representatives of strongly non-singular binary matrices which form equivalence classes under row and column permutations and an efficient search over the ``children'' of these matrices formed by appending a row and column. In addition to extending the above sequence to $n = 9$, it provides fine-grained sparsity data by reporting the number of strongly non-singular binary matrices of dimension $n$ with $k$ non-zero entries. These computations were performed on the Duke University Department of Computer Science cluster compute nodes, which are each equipped with 128 CPU threads and 1.15 TB of RAM. Utilizing a dedicated allocation of 100 CPU threads and 500 GB of RAM, evaluating the canonical equivalence classes up to dimension $8 \times 8$ required 30.5 days of computation, and computing the $9 \times 9$ extensions required an additional 7.2 days. We summarize our results in the following theorem:

\begin{figure}[t!]
\centering
  \begin{subfigure}[t]{0.325\textwidth}
    \centering
    \includegraphics[height =1.65 in]{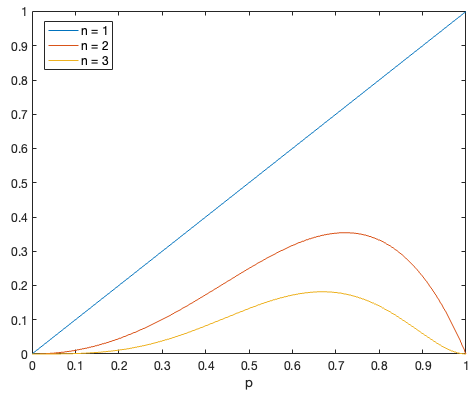}
    \caption{$n =1,\ldots,3$}
  \end{subfigure}
  \begin{subfigure}[t]{0.325\textwidth}
    \centering
\includegraphics[height =1.65 in]{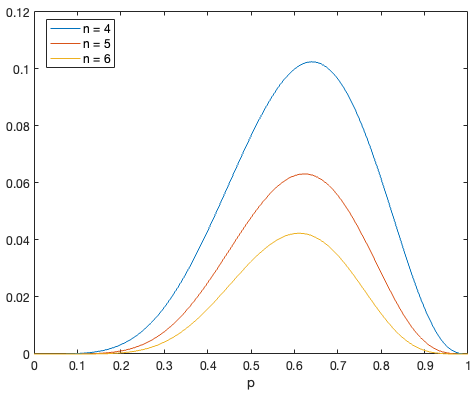}
    \caption{$n = 4,\ldots,6$}
  \end{subfigure}
  \begin{subfigure}[t]{0.325\textwidth}
    \centering
    \includegraphics[height =1.65 in]{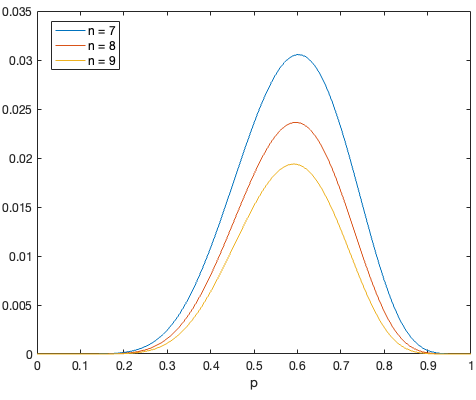}
    \caption{$n=7,\ldots,9$}
  \end{subfigure}
 \caption{Plots of $N_n(p):=\mathbb{P}\left[M_n(\mathrm{Ber}(p)) \text{ is strongly non-singular}\right]$ for $n = 1,\ldots,9$.}
\label{fig:exact_Nnp}
\end{figure}

\begin{theorem}
Let $N_n$ denote the number of strongly non-singular matrices in $\{0,1\}^{n\times n}$ and $N_{n,k}$ denote the number of strongly non-singular matrices in $\{0,1\}^{n\times n}$ with exactly $k$ non-zero entries. We have
\begin{align*}
N_7 &= 13\;288\;620\;860\;672, \\
N_8 &= 339\;148\;010\;221\;571\;072, \\
N_9 &= 36\;646\;054\;311\;185\;413\;881\;216,
\end{align*}
and the exact values of $N_{n,k}$ for all $n \le 9$ are reported in Tables \ref{tab:exact_counts} and \ref{tab:exact_counts_9x9}.
\end{theorem}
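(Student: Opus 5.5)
This theorem is a computational statement, so the plan is an exact enumeration organized by recursion in $n$, with symmetry reduction, rather than a search over all $2^{n^2}$ matrices. The organizing observation is that a matrix $A\in\{0,1\}^{n\times n}$ is strongly non-singular if and only if its leading $(n-1)\times(n-1)$ principal submatrix $A'$ is strongly non-singular and $\det A\neq 0$. Hence every strongly non-singular $n\times n$ matrix is a ``child'' of a strongly non-singular $(n-1)\times(n-1)$ matrix, obtained by appending a row and a column.

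The counts come from summing, over strongly non-singular parents $A'$, the number of bordered extensions that are non-singular. For a fixed $A'$ we append a column $c\in\{0,1\}^{n-1}$, a row $r\in\{0,1\}^{n-1}$, and a corner entry $d$. By the Schur complement, $\det A=\det A'\,(d-rA'^{-1}c)$. So for fixed $(r,c)$, at most one of the two corner values $d\in\{0,1\}$ makes $A$ singular, namely the one with $d=rA'^{-1}c$ when that value lies in $\{0,1\}$. Therefore the number of good children equals $2\cdot 4^{n-1}$ minus the number of pairs $(r,c)$ with $rA'^{-1}c\in\{0,1\}$. To count these we precompute $A'^{-1}c$ for all $2^{n-1}$ columns $c$, exactly in rational arithmetic or, more cheaply, as $\operatorname{adj}(A')c$ together with the integer $\det A'$. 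We then bucket the pairs by value, which costs $O(4^{n-1})$ per parent. To obtain $N_{n,k}$, we also track the weights $|r|+|c|+d$ and the weight of $A'$, which gives a generating polynomial in the sparsity.

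The number of parents is reduced using symmetry, and this is where the group theory enters. Let $S_{n-1}\times S_{n-1}$ act by row and column permutations. Such permutations do not preserve leading principal minors in general. However, permuting the rows and columns of the \emph{appended} data does act compatibly, since conjugating $A'$ by simultaneous permutations preserves its strongly non-singular structure only for permutations that fix prefixes. The plan is therefore to use a canonical-form tool (for example nauty, applied to the bipartite graph of $A$ with vertex colorings encoding position) to pick one representative per orbit under the subgroup of transformations that preserve strong non-singularity. We store each representative together with its orbit size and count children of representatives only, weighting each by its orbit size. Level $n$ then feeds level $n+1$: children are canonicalized, deduplicated, and their orbit sizes are accumulated. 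For the final level, $n=9$, no canonicalization is needed, because we only count extensions of the $8\times 8$ classes using the Schur complement count above.

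The main obstacle is correctness of the symmetry reduction. One must identify precisely a group action that preserves strong non-singularity and whose orbits the canonical labeling respects, and one must weight by orbit (stabilizer) sizes without double counting. I would validate this by reproducing the known values $1,4,68,5008,1603232,2224232640$ for $n\le 6$, by brute force cross-checks at $n\le 5$, and by checking the identity $\sum_k N_{n,k}=N_n$. The remaining challenge is purely resource-bound, namely memory for the $8\times 8$ classes and the running time quoted in the introduction. The stated values of $N_7,N_8,N_9$ and the tables are then the output of this certified computation.
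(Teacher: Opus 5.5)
Your recursion, your Schur-complement count of non-singular borderings ($2\cdot 4^{n-1}$ minus the pairs with $rA'^{-1}c\in\{0,1\}$), and your remark that the last level needs no canonicalization are all fine. The gap is the symmetry reduction, which you flag as the main obstacle and then resolve in the wrong direction. You take orbits under ``the subgroup of transformations that preserve strong non-singularity.'' For $n\ge 2$ that subgroup is trivial apart from transposition. If $PAQ$ is strongly non-singular for every strongly non-singular $A$, then taking $A=I$ forces $PQ=I$, so the permutation is simultaneous. Now suppose such a simultaneous permutation fixes $1,\dots,m-1$ and puts an index $i>m$ into position $m$. Take $A$ equal to $I$ except for the block $\begin{pmatrix}1&1\\1&0\end{pmatrix}$ on indices $\{i-1,i\}$. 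This $A$ is strongly non-singular, but the permuted matrix has leading $m\times m$ block equal to the principal submatrix of $A$ on $\{1,\dots,m-1,i\}$, which is $\mathrm{diag}(1,\dots,1,0)$. The position-encoding vertex colors you propose for nauty would likewise individualize every vertex. So your orbits have size at most $2$, and the scheme degenerates to processing essentially all $N_8\approx 3.4\cdot 10^{17}$ strongly non-singular $8\times 8$ parents, each with $4^8$ borderings. That is far beyond feasible.

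The missing idea is that the group need not preserve strong non-singularity at all; you had the right identity in hand. For any permutation matrices $P,Q$,
\[
\begin{pmatrix} P & 0\\ 0 & 1\end{pmatrix}\begin{pmatrix} B & P^{T}x\\ (Qy)^{T} & d\end{pmatrix}\begin{pmatrix} Q & 0\\ 0 & 1\end{pmatrix}=\begin{pmatrix} PBQ & x\\ y^{T} & d\end{pmatrix}.
\]
So $(x,y,d)\mapsto(P^{T}x,Qy,d)$ is a bijection between the borderings of $A=PBQ$ and those of $B$. It preserves $|\det|$, the number of ones, and the equivalence class of the bordered matrix; for transposition, swap the roles of $x$ and $y$. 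Hence the number and class distribution of non-singular borderings depend only on the orbit of the parent under the full group $S_n\times S_n$ together with transposition, whether or not the representative is itself strongly non-singular. You should therefore quotient by this full group and carry strong non-singularity in the weight. For each class, store the number $N_{[A]}$ of strongly non-singular matrices it contains, not its orbit size; the orbit size would overcount, since most members of a full orbit are not strongly non-singular. For every non-singular bordering $C$ of the representative, add $N_{[A]}$ to the class of $C$. Each strongly non-singular matrix has a unique parent, its leading block, so this accumulates exactly the strongly non-singular count of each class. This is the paper's Isomorphism Preservation lemma and Algorithm~\ref{alg:generation}. Canonical forms are computed on the bipartite graph with only the row and column sides distinguished, for both orderings $(R,C)$ and $(C,R)$.
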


Using the values $N_{n,k}$ for $n \le 9$, we may exactly compute the functions \[N_n(p):=\mathbb{P}\left[M_n(\mathrm{Ber}(p)) \text{ is strongly non-singular}\right]\]
for $n \le 9$. We do so and plot the results in Figure \ref{fig:exact_Nnp}.

Exact values of strongly non-singular matrices for small dimensions can be extrapolated to upper bounds for larger dimensions. We perform this extrapolation using the data $N_{n,k}$ in dimension $n = 9$. 
\begin{figure}
\centering
\includegraphics[width = 4.5in]{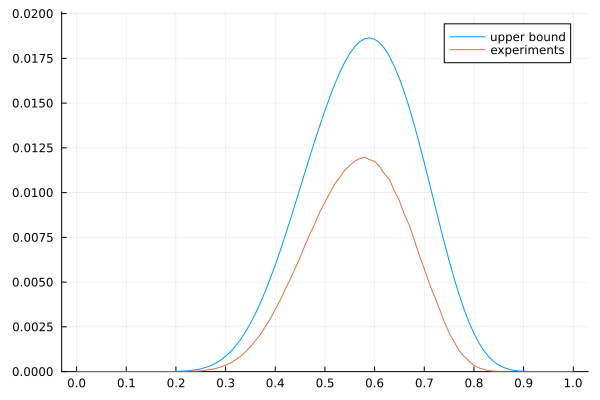}
\caption{Comparison of the analytical upper bound and experimental estimates for $N_{30}(p)$. The theoretical upper bound is evaluated from the exact $9 \times 9$ counts. Empirical values were generated via Monte Carlo simulation, testing $1001600$ random $30 \times 30$ extensions drawn from the set of strongly non-singular $4 \times 4$ base matrices.}
\label{fig:upper_exp}
\end{figure}
Our upper bounds are always within an additive factor of at most $0.68\%$ to what sampling suggests. For example, for a uniformly random $\{0,1\}$-matrix of dimension $n = 30$, we show that the probability of being strongly non-singular is bounded above by $1.45\ldots \%$, whereas experiments strongly suggest that this value is approximately $0.94\%$. Proving reasonably tight lower bounds (compared to the implicit constant in Corollary \ref{corr:main}) appears to be much more difficult. Such an improvement would likely require the creation of new techniques, as all known results that correctly capture the exponential rareness of singularity (i.e., the correct exponential rate) hide large, complicated constants. We leave the following question of providing effective lower bounds for this specific setting as an open problem:

\begin{open}
Prove that $\lim_{n \rightarrow \infty} \mathbb{P}\left[M_n(\mathrm{Ber}(\tfrac{1}{2})) \text{ is strongly non-singular}\right]  >0.5\%$.
\end{open}

\subsection{Remainder of paper} In Section \ref{sec:main_thm}, we prove Theorem \ref{thm:main}. In Section \ref{sec:anti_concentrated}, we prove Theorem \ref{thm:p_small}. In Section \ref{sec:sns}, we detail an algorithm to exactly count the number of $n \times n$ strongly non-singular binary matrices. In Section \ref{sec:experiments}, we detail our methodology for extrapolating our exact computations for dimension nine to upper bounds for $n \times n$ strongly non-singular binary matrices.

\section{LU Factorization of Discrete Random Matrices}\label{sec:main_thm}
Here we provide a proof of Theorem \ref{thm:main}. Our technique consists of handling small, medium, and large values of $n$ separately, and then relating singular value estimates to growth factor estimates.

\subsection{Bounding $n$ small} A very simple argument provides a loose, but sufficiently good first estimate for small values of $n$.

\begin{lemma}\label{lm:simple}
\[\mathbb{P}\left[\Delta_{k}(M_n(\xi)) \ne 0 \; \text{for all } k \in [n] \right] \ge (1 - |\xi|_\infty)^n\]
\end{lemma}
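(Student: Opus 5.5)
We will reveal the matrix one leading principal submatrix at a time and apply the chain rule. Let $A = M_n(\xi)$ and let $A_k$ denote its leading $k\times k$ principal submatrix. Write $E_k$ for the event that $\Delta_j(A)\neq 0$ for all $j\le k$. Then
\[
\mathbb{P}[E_n] = \prod_{k=1}^n \mathbb{P}[E_k \mid E_{k-1}],
\]
with $E_0$ the sure event. It therefore suffices to show $\mathbb{P}[\Delta_k(A)\neq 0 \mid E_{k-1}] \ge 1-|\xi|_\infty$ for each $k$.

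For $k=1$ this is immediate, since $\Delta_1(A)=a_{11}$ and $\mathbb{P}[a_{11}=0]\le|\xi|_\infty$. For $k\ge 2$ we condition on all entries of $A_k$ except the corner entry $a_{kk}$. This conditioning fixes $A_{k-1}$, and hence determines whether $E_{k-1}$ holds. Expanding $\det A_k$ along the last row (or by the Schur complement), we get
\[
\Delta_k(A) = a_{kk}\,\Delta_{k-1}(A) + c,
\]
where $c$ depends only on the conditioned entries. On $E_{k-1}$ we have $\Delta_{k-1}(A)\neq0$, so $\Delta_k(A)=0$ exactly when $a_{kk} = -c/\Delta_{k-1}(A)$, a single value fixed by the conditioning. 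Since $a_{kk}$ is independent of all the conditioned entries, this happens with probability at most $|\xi|_\infty$. Averaging over the conditioned entries restricted to $E_{k-1}$ gives $\mathbb{P}[\Delta_k\neq0\mid E_{k-1}]\ge 1-|\xi|_\infty$.

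Multiplying the $n$ factors yields the bound $(1-|\xi|_\infty)^n$. The only step needing care is the conditioning: we condition on a $\sigma$-algebra that excludes $a_{kk}$, and $E_{k-1}$ is measurable with respect to it, so independence of $a_{kk}$ applies cleanly. If $\mathbb{P}[E_{k-1}]=0$ the bound is trivial. No genuine obstacle is expected.
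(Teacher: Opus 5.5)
Your proof is correct and follows the same route as the paper. Both argue by induction that, on $E_{k-1}$, the minor $\Delta_k$ is affine in the corner entry $a_{kk}$ with non-zero coefficient $\Delta_{k-1}$, so by independence of $a_{kk}$ it vanishes with probability at most $|\xi|_\infty$; your explicit conditioning on the entries other than $a_{kk}$, with respect to which $E_{k-1}$ is measurable, just makes rigorous what the paper states informally.
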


\begin{proof}
    We proceed by induction. By definition, $\P[\xi \ne 0] \ge 1- |\xi|_\infty$. Now, suppose $\Delta_k(M_n(\xi)) \ne 0$ for all $k \in [\ell]$ with probability at least $(1-|\xi|_\infty)^{\ell}$. Assuming that $\Delta_k(M_n(\xi)) \ne 0$ for all $k \in [\ell]$, the quantity $\Delta_{\ell+1}(M_n(\xi))$ is linear in the entry $[M_n(\xi)]_{\ell+1,\ell+1}$ with a non-zero coefficient, and therefore  $\Delta_{\ell+1}(M_n(\xi))$ can equal zero with probability at most $|\xi|_\infty$.
\end{proof}

\subsection{Bounding $n$ moderate}\label{sub:n_moderate} Here we apply a version of a technique introduced by K{\'o}mlos \cite{komlos1967determinant}, to produce a bound on invertibility that decays slowly. For the remainder of the subsection, let us write $M_{n+1}(\xi)$ in block notation as
\[M_{n+1}(\xi) = \begin{pmatrix} A & \bm{x} \\ \bm{y}^T & \alpha \end{pmatrix}, \qquad \text{where} \quad A \in \mathbb{R}^{n \times n},\; \bm{x}, \bm{y} \in \mathbb{R}^{n}, \; \alpha \in \mathbb{R}. \]
Conditional on the invertibility of $A$, $M_{n+1}(\xi)$ is singular if and only if $\alpha = \bm{y}^T A^{-1} \bm{x}$. We aim to bound $\mathbb{P}_{\bm{x},\bm{y},\alpha}\left[\alpha = \bm{y}^TA^{-1} \bm{x} \right]$ for a fixed invertible $A$, by breaking our analysis into two cases: either the number of non-zeros of $A^{-1} \bm{x}$, denoted $\mathrm{nnz}(A^{-1} \bm{x})$, is small, which is rare, or $\mathrm{nnz}(A^{-1} \bm{x})$ is large, which means it's unlikely that $\bm{y}^TA^{-1} \bm{x} = \alpha$. 

First, we produce a bound on the probability of the random vector $\bm{x} \in \mathbb{R}^{n}$ lying in some fixed subspace of a given dimension.

\begin{lemma}\label{lm:subspace_ac}
Let $W$ be a fixed $d$-dimensional subspace of $\mathbb{R}^{n}$ and $\bm{x} \in \mathbb{R}^{n}$ be a random vector with iid entries sampled from a discrete random variable $\xi$. Then $\mathbb{P}[\bm{x} \perp W] \le |\xi|_\infty^d$.
\end{lemma}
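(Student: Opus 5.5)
We reduce the event $\bm{x} \perp W$ to the event that a few coordinates of $\bm{x}$ take prescribed values, and then use independence. Choose a basis $\bm{w}_1,\ldots,\bm{w}_d$ of $W$ and form the $d \times n$ matrix $B$ with these vectors as rows. Then $\bm{x} \perp W$ is equivalent to $B\bm{x} = 0$. Since $B$ has rank $d$, some $d$ columns of $B$ are linearly independent; let $S \subseteq [n]$, $|S| = d$, index them, so the $d\times d$ submatrix $B_S$ is invertible. Write $B\bm{x} = B_S \bm{x}_S + B_{S^c}\bm{x}_{S^c}$. Then $B\bm{x}=0$ holds if and only if
\[\bm{x}_S = -B_S^{-1} B_{S^c} \bm{x}_{S^c}.\]
In words, once the coordinates outside $S$ are fixed, the coordinates in $S$ are uniquely determined.

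Next we condition on $\bm{x}_{S^c}$, which is independent of $\bm{x}_S$ because the entries are iid. For any fixed value of $\bm{x}_{S^c}$, the event requires each coordinate $x_i$, $i \in S$, to equal one specific real number $v_i$, determined by $\bm{x}_{S^c}$. Each such equality has probability at most $|\xi|_\infty$, and these $d$ coordinates are mutually independent. So the conditional probability is at most $|\xi|_\infty^d$. Averaging over $\bm{x}_{S^c}$, which is a sum over its countably many discrete values, gives $\mathbb{P}[\bm{x}\perp W] \le |\xi|_\infty^d$.

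There is no real obstacle here. The only points to get right are the choice of $d$ linearly independent columns, which is possible since row rank equals column rank, and the correct use of independence between $\bm{x}_S$ and $\bm{x}_{S^c}$ in the conditioning step.
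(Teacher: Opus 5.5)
Your proposal is correct and follows essentially the same route as the paper: write $\bm{x}\perp W$ as a rank-$d$ linear system, choose $d$ independent columns so those $d$ coordinates of $\bm{x}$ are determined by the rest, and bound the probability by $|\xi|_\infty^d$ using independence. The only differences are minor. You use an arbitrary basis where the paper uses an orthonormal one, and you spell out the conditioning on $\bm{x}_{S^c}$ that the paper leaves implicit.
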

\begin{proof}
Let $\{\bm{w}_1, \bm{w}_2, \dots, \bm{w}_d\}$ be an orthonormal basis for $W$ and $Q = \left( \bm{w}_1 \; \bm{w}_2 \; \ldots \; \bm{w}_d\right) \in \mathbb{R}^{n \times d}$. The condition that $\bm{x}$ is orthogonal to $W$ is equivalent to $Q^T \bm{x} = 0$. The matrix $Q$ has rank $d$, and, without loss of generality, we may suppose that the first $d$ columns of $Q^T$ are linearly independent. Partitioning $Q^T$ and $\bm{x}$ as
\[
Q^T = \big(Q_L \;\; Q_R \big) \quad \text{and} \quad \bm{x} = \begin{pmatrix} \bm{x}_L \\ \bm{x}_R \end{pmatrix}, \quad \text{where} \; Q_L \in \mathbb{R}^{d \times d}, \; Q_R \in \mathbb{R}^{d \times (n-d)}, \; \bm{x}_L \in \mathbb{R}^d, \; \bm{x}_R \in \mathbb{R}^{n-d},
\]
the condition $Q^T \bm{x} = 0$ becomes $\bm{x}_L = - Q_L^{-1} Q_R \bm{x}_R$, i.e., the first $d$ entries of $\bm{x}$ are uniquely determined by the last $n-d$. The probability that $\bm{x}_L$ equals $- Q_L^{-1} Q_R \bm{x}_R$ is at most $|\xi|_\infty^d$, as desired.
\end{proof}

Using the above lemma, we can now effectively bound the probability that $\mathrm{nnz}(A^{-1} \bm{x})$ is small.

\begin{lemma}
Let $A \in \mathbb{R}^{n \times n}$ be a fixed invertible matrix, and $\bm{x} \in \mathbb{R}^{n}$ be a random vector with iid entries sampled from a discrete random variable $\xi$. Then
\[
\mathbb{P}\left[\operatorname{nnz}(A^{-1} \bm{x}) \le k \right] \le \exp\left\{k\log(n) +(n-k) \log |\xi|_\infty \right\} \qquad \text{for all} \quad k \ge 3.
\]
\end{lemma}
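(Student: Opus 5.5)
Set $\bm{z} = A^{-1}\bm{x}$, so that $\bm{x} = A\bm{z}$. The plan is a union bound over possible supports of $\bm{z}$, with Lemma \ref{lm:subspace_ac} handling each fixed support. If $\mathrm{nnz}(\bm{z}) \le k$, then there is a set $S \subseteq [n]$ with $|S| = k$ and $\mathrm{supp}(\bm{z}) \subseteq S$. Hence $\bm{x} \in V_S := \mathrm{span}\{A\bm{e}_j : j \in S\}$, where $A\bm{e}_j$ is the $j$th column of $A$. Since $A$ is invertible, these columns are linearly independent, so $V_S$ has dimension exactly $k$. Let $W_S := V_S^{\perp}$, a fixed subspace of dimension $n-k$. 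The event $\bm{x} \in V_S$ is the same as $\bm{x} \perp W_S$, and Lemma \ref{lm:subspace_ac} gives
\[\mathbb{P}[\bm{x} \in V_S] \le |\xi|_\infty^{\,n-k}.\]

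Next, take a union bound over all $\binom{n}{k}$ choices of $S$:
\[\mathbb{P}\left[\mathrm{nnz}(A^{-1}\bm{x}) \le k\right] \le \binom{n}{k}\, |\xi|_\infty^{\,n-k}.\]
Using $\binom{n}{k} \le n^k = \exp\{k\log n\}$ turns this into the claimed bound $\exp\{k\log(n) + (n-k)\log|\xi|_\infty\}$. If $|\xi|_\infty = 0$ the right-hand side is $0$ with the convention $\log 0 = -\infty$. But $|\xi|_\infty > 0$ for a discrete variable anyway, so this case does not arise.

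There is no real obstacle here. The hypothesis $k \ge 3$ is not needed for this argument; it presumably matters only later, or for a sharper form of the binomial estimate. The one point to check carefully is the dimension count. The subspace $V_S$ has dimension exactly $k$ because the columns of $A$ are independent. Therefore its orthogonal complement has dimension $n-k$, and that dimension is the exponent that Lemma \ref{lm:subspace_ac} produces. Supports of size smaller than $k$ are already covered, since every such support lies inside some $k$-set $S$.
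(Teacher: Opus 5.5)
Your proof is correct and follows the paper's argument. Your $W_S = V_S^{\perp}$ is exactly the paper's $\mathrm{span}\{\mathrm{row}_i(A^{-1}) : i \notin S\}$, and you bound $\binom{n}{k} \le n^k$ directly where the paper goes through $(en/k)^k$ and needs $k \ge 3$ to absorb $e/k$ — so you are right that the hypothesis $k \ge 3$ is not actually needed.
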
\label{lm:sparse_rare}
\begin{proof}
For a subset $I \subset [n]$, let $E_I$ be the event
\[
E_I := \{(A^{-1} \bm{x})_i = 0 \text{ for all } i \in I\} = \{\bm{x} \perp \mathrm{span}\{\mathrm{row}_i(A^{-1})\, :\, i \in I \} \}.
\]
The event $\operatorname{nnz}(A^{-1} \bm{x}) \le k$ implies that there exists an index set $I$ of size $|I| = n-k$ for which $E_I$ occurs, or, equivalently, there is some $|I| = n-k$ for which $\bm{x}$ is orthogonal to the $(n-k)$-dimensional subspace $\mathrm{span}\{\mathrm{row}_i(A^{-1})\, :\, i \in I \}$. Using Lemma \ref{lm:subspace_ac} and a union bound,
\[
\mathbb{P}\left[\operatorname{nnz}(A^{-1} \bm{x}) \le k \right] \le \sum_{|I|=n-k} \mathbb{P}\left[E_I\right] \le \binom{n}{k} |\xi|_\infty^{n-k} \le  \left(\frac{en}{k}\right)^{k} |\xi|_\infty^{n-k}.
\]
Noting that $k \ge 3$, our desired result follows immediately
\begin{align*}  \mathbb{P}\left[\operatorname{nnz}(A^{-1} \bm{x}) \le k \right] &\le \exp\{k(\log n + 1 - \log k) + (n-k) \log |\xi|_\infty\} \\
     &\le  \exp\{k \log n + (n-k) \log |\xi|_\infty\}.
\end{align*} 
\end{proof}

When $\mathrm{nnz}(A^{-1} \bm{x})$ is not too small, we can make use of the following Littlewood-Offord type inequality of Rogozin to conclude that $\alpha = \bm{y}^TA^{-1} \bm{x}$ is rare.

\begin{corollary}[Corollary of {\cite[Theorem 2]{rogozin1961estimate}}]\label{cor:rogozin}
Let $\bm{z} \in \mathbb{R}^m$ be a fixed vector with $k$ non-zero entries, and $\bm{y} \in \mathbb{R}^{m}$ be a random vector with iid entries sampled from a discrete random variable $\xi$. Then
\[\sup_{a \in \mathbb{R}} \mathbb{P}\left[ \bm{y}^T \bm{z} = a \right] \le \frac{C}{\sqrt{k(1-|\xi|_\infty)}}\]
for an absolute constant $C$.
\end{corollary}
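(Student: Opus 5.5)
The plan is to deduce the corollary from Rogozin's concentration inequality for sums of independent random variables \cite[Theorem 2]{rogozin1961estimate}. First, observe that only the coordinates of $\bm{y}$ in the support of $\bm{z}$ matter. Let $S=\{i : z_i\neq 0\}$, so that $|S|=k$. Then
\[
\bm{y}^T\bm{z}=\sum_{i\in S} z_i y_i,
\]
which is a sum of $k$ independent random variables $X_i := z_i y_i$. Rogozin's theorem in its concentration-function form states that for independent $X_1,\dots,X_k$ and any $\lambda_i \le L$,
\[
Q\Big(\sum_i X_i; L\Big) \le \frac{C\,L}{\sqrt{\sum_i \lambda_i^2\bigl(1-Q(X_i;\lambda_i)\bigr)}},
\]
where $Q(X;\lambda)=\sup_a \mathbb{P}[a\le X\le a+\lambda]$ is the L\'evy concentration function. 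Since $\sup_a\mathbb{P}[\bm{y}^T\bm{z}=a]\le Q(\bm{y}^T\bm{z};L)$ for every $L>0$, it suffices to make a good choice of the $\lambda_i$ and $L$.

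The key step is the choice of scales. The atoms $\mathbb{P}[X_i=a]$ are governed by $|\xi|_\infty$, because $z_i\ne0$ gives $\mathbb{P}[X_i=a]=\mathbb{P}[y_i=a/z_i]\le|\xi|_\infty$. However, $Q(X_i;\lambda)$ also counts nearby atoms, which must be controlled. Since $\xi$ is discrete, the scaled support $z_i\cdot\mathrm{supp}(\xi)$ is countable. The intent is therefore to pick $\lambda_i=\lambda$, uniform in $i$, so small that $Q(X_i;\lambda)$ is close to the largest atom, i.e. $1-Q(X_i;\lambda)\ge \tfrac12(1-|\xi|_\infty)$ for every $i\in S$, and then to take $L=\lambda$. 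The bound becomes $C\lambda/\sqrt{k\lambda^2\cdot\tfrac12(1-|\xi|_\infty)}$, in which $\lambda$ cancels, giving the claim with constant $\sqrt2\,C$.

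The main obstacle is justifying that a small $\lambda$ exists. This needs care if $\xi$ has infinitely many atoms accumulating somewhere, and in particular near its largest atom. The approach is to use continuity from above of probability measures. For fixed $a$, $\mathbb{P}[a\le X_i\le a+\lambda]\to\mathbb{P}[X_i=a]$ as $\lambda\downarrow0$, but $\sup_a$ need not commute with the limit in general. For a purely atomic law, however, $Q(X;\lambda)\downarrow\max_a\mathbb{P}[X=a]$. To see this, take any $\varepsilon>0$ and a finite set $F$ of atoms carrying mass $\ge1-\varepsilon$. Choose $\lambda$ smaller than the minimal gap in $F$; then any interval of length $\lambda$ contains at most one point of $F$, so $Q(X;\lambda)\le|\xi|_\infty+\varepsilon$. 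Taking $\varepsilon=\tfrac12(1-|\xi|_\infty)$ and the minimum of the resulting $\lambda$ over the finitely many $i\in S$ completes the argument. In the paper's main setting $\xi$ has finite support, so this step is simpler still.
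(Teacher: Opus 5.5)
Your proposal is correct and follows the paper's route: the paper simply cites Rogozin's concentration inequality without writing out the deduction, and your argument supplies exactly the missing step. Taking a common small scale $\lambda_i = L = \lambda$ makes $\lambda$ cancel, and the finite-atom-set argument gives $Q(z_i\xi;\lambda)\le|\xi|_\infty+\varepsilon$, which passes from the concentration function to point probabilities with constant $\sqrt{2}\,C$. The degenerate cases $k=0$ or $|\xi|_\infty=1$ are trivial, since the right-hand side is then infinite.
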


Combining Lemma \ref{lm:sparse_rare} and Corollary \ref{cor:rogozin}, we can obtain a gentler decay of the probability of all leading principal minors being non-zero.

\begin{lemma}\label{lm:n_moderate}
Let $A \in \mathbb{R}^{n \times n}$ be a fixed non-singular matrix, and $\bm{x},\bm{y} \in \mathbb{R}^{n}$ and $\alpha \in \mathbb{R}$ have iid entries sampled from a discrete random variable $\xi$. Then there exists some $n_0 \in \mathbb{N}$ such that
\[\mathbb{P}\left[ \det  \begin{pmatrix} A & \bm{x} \\ \bm{y}^T & \alpha \end{pmatrix} = 0\right] \le  \frac{1}{n^{1/3}}  \qquad \text{for all } n \ge n_0.\]
\end{lemma}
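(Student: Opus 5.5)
Since $A$ is non-singular, the Schur complement gives $\det\begin{pmatrix} A & \bm{x} \\ \bm{y}^T & \alpha \end{pmatrix} = \det(A)\,(\alpha - \bm{y}^T A^{-1}\bm{x})$. The determinant therefore vanishes exactly when $\bm{y}^T \bm{z} = \alpha$, where $\bm{z} := A^{-1}\bm{x}$. Because $\bm{x}$, $\bm{y}$ and $\alpha$ are mutually independent, we may condition on $\bm{x}$ (and hence on $\bm{z}$) and split according to $\operatorname{nnz}(\bm{z})$, using a threshold $k = k(n)$ to be chosen.

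\emph{Sparse case.} By Lemma \ref{lm:sparse_rare},
\[\mathbb{P}[\operatorname{nnz}(\bm{z}) \le k] \le \exp\{k\log n + (n-k)\log|\xi|_\infty\}.\]
Since $|\xi|_\infty < 1$, we have $\log|\xi|_\infty = -c_\xi$ with $c_\xi > 0$. We need this bound to be $o(n^{-1/3})$.

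\emph{Dense case.} Condition on $\bm{x}$ with $\operatorname{nnz}(\bm{z}) > k$. Since $\bm{y}$ is independent of $(\bm{x},\alpha)$, we can further condition on $\alpha$. Corollary \ref{cor:rogozin} then gives
\[\mathbb{P}[\bm{y}^T\bm{z} = \alpha \mid \bm{x}, \alpha] \le \sup_a \mathbb{P}[\bm{y}^T \bm{z} = a] \le \frac{C}{\sqrt{k(1-|\xi|_\infty)}}.\]
Putting the two cases together,
\[\mathbb{P}[\det = 0] \le \exp\{k\log n - (n-k)c_\xi\} + \frac{C}{\sqrt{k(1-|\xi|_\infty)}}.\]

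\emph{Choice of $k$.} Take $k = \lfloor n^{3/4} \rfloor$, or anything of the form $n/\log^2 n$; this is at least $3$ for large $n$. Then the sparse term is at most $\exp\{n^{3/4}\log n - c_\xi n/2\}$, which decays superpolynomially. The dense term is $O_\xi(n^{-3/8})$. Since $3/8 > 1/3$, the sum is at most $\tfrac12 n^{-1/3} + \tfrac12 n^{-1/3}$ for all $n \ge n_0$, where $n_0$ depends only on $\xi$ through $|\xi|_\infty$ and the absolute constant $C$. This proves the lemma.

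The only delicate point is balancing $k$. It must be $o(n/\log n)$ so that $k\log n$ is dominated by the linear term $(n-k)c_\xi$. It must also grow faster than $n^{2/3}$ so that the Rogozin term beats $n^{-1/3}$. Any power $n^{\beta}$ with $2/3 < \beta < 1$ satisfies both requirements. The rest is routine; in particular, $n_0$ is independent of $A$ because all the bounds used are uniform over non-singular $A$.
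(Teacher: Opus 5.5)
Your proposal is correct and follows the paper's proof. Both split according to whether $\operatorname{nnz}(A^{-1}\bm{x})\le k$, use the union bound over coordinate subspaces for the sparse case, and use Rogozin's inequality (Corollary~\ref{cor:rogozin}) for the dense case; the differences are cosmetic. You take $k=\lfloor n^{3/4}\rfloor$ instead of the paper's $k=\lfloor n\log|\xi|_\infty^{-1}/(2\log(n|\xi|_\infty^{-1}))\rfloor$, and you absorb $\alpha$ by conditioning on it rather than appending $-1$ to $A^{-1}\bm{x}$.
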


\begin{proof}
We break our analysis into two cases depending on the size of $\mathrm{nnz}(A^{-1} \bm{x})$. By Lemma \ref{lm:sparse_rare}, $\mathrm{nnz}(A^{-1} \bm{x}) \le k$ with probability at most $\exp\{k \log n + (n-k) \log |\xi|_{\infty} \}$ (for $k \ge 3$). By Corollary $\ref{cor:rogozin}$, if $\mathrm{nnz}(A^{-1} \bm{x}) > k$, then $\begin{pmatrix} \bm{y}^T & \alpha \end{pmatrix} \begin{pmatrix} A^{-1} \bm{x} \\ -1 \end{pmatrix} = 0$ with probability at most $C ((k+2)(1-|\xi|_{\infty}))^{-1/2}$ for some fixed constant $C$. Altogether, we have that, for $k = \lfloor (n \log |\xi|^{-1}_{\infty})/(2\log(n|\xi|^{-1}_{\infty})) \rfloor$,
\begin{align*}\mathbb{P}\left[ \det  \begin{pmatrix} A & \bm{x} \\ \bm{y}^T & \alpha \end{pmatrix} = 0\right] &\le \exp\{k \log n + (n-k) \log |\xi|_{\infty} \} + \frac{C}{\sqrt{(k+2)(1-|\xi|_{\infty})}}\\
&\le \exp\{ -\tfrac{1}{2} n \log |\xi|^{-1}_{\infty}\} + \frac{C \sqrt{2 \log(n |\xi|^{-1}_\infty)}}{\sqrt{(1-|\xi|_{\infty}) \, n \log |\xi|^{-1}_\infty}}.\end{align*}
For $n$ sufficiently large, this quantity is at most $1/n^{1/3}$.
\end{proof}

\subsection{Bounding $n$ large} To treat $n$ sufficiently large and produce bounds on singular values, we recall the following corollary of a result of Jain, Sah, and Sawhney.

\begin{corollary}[Corollary of {\cite[Theorem 1.4]{jain2021singularity}}]\label{cor:jss} Let $\xi$ be a discrete random variable taking finitely many values. There exists a constant $C_\xi > 0$ such that for any fixed $\epsilon > 0$ and for all sufficiently large $n$ and all $t \geq 0$,
\[
\mathbb{P}[\sigma_{\min}(M_n(\xi)) \leq t/\sqrt{n}] \leq C_\xi t + (1+\epsilon)^n |\xi|_\infty^n.
\]
\end{corollary}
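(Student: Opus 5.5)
The plan is to obtain the estimate directly from \cite[Theorem 1.4]{jain2021singularity}, specialized to iid entries, followed by an elementary rescaling of the error parameter. As I recall it, that theorem says the following. For a discrete distribution $\xi$ with finite support that is not a point mass (here guaranteed by $|\xi|_\infty<1$), and for every $\epsilon'>0$, there exists a constant $C=C(\xi,\epsilon')$ such that
\[
\mathbb{P}\big[\sigma_{\min}(M_n(\xi)) \le t/\sqrt{n}\big] \le C t + 2\big(|\xi|_\infty+\epsilon'\big)^n
\]
for all $t\ge 0$ and all sufficiently large $n$. The first step is to check that the hypotheses match ours: the entries are iid copies of $\xi$, the support is finite, and the atom size is $|\xi|_\infty<1$. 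Nothing further is needed from that paper.

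The second step converts the additive slack $\epsilon'$ into the multiplicative slack $(1+\epsilon)$ appearing in the statement. Given $\epsilon>0$, I would choose $\epsilon' = \tfrac{\epsilon}{2}|\xi|_\infty$. This choice is legitimate because $|\xi|_\infty>0$ for any discrete $\xi$. Then
\[
2\big(|\xi|_\infty+\epsilon'\big)^n = 2\big(1+\tfrac{\epsilon}{2}\big)^n|\xi|_\infty^n \le (1+\epsilon)^n|\xi|_\infty^n,
\]
where the inequality holds as soon as $\big(\tfrac{1+\epsilon}{1+\epsilon/2}\big)^n\ge 2$, i.e.\ for all $n$ beyond an explicit threshold depending only on $\epsilon$. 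Combining this with the previous step yields the displayed bound for all sufficiently large $n$.

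The main subtlety is the order of quantifiers on the constant. In Jain--Sah--Sawhney, the constant in front of $t$ is allowed to depend on $\epsilon'$, and hence here on $\epsilon$. So the honest reading of the corollary is that $C_\xi$ may depend on the fixed $\epsilon$ as well as on $\xi$. For the application to Theorem \ref{thm:main}, this is harmless: there one fixes a single $\epsilon$ with $(1+\epsilon)|\xi|_\infty<1$, so that the second term is exponentially small, and then $C_\xi$ is a genuine constant. I would therefore state the corollary with this understanding, or equivalently fix such an $\epsilon=\epsilon(\xi)$ once and for all, which makes $C_\xi$ depend only on $\xi$.

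Beyond this quantifier issue, the only point I would double-check is the exact normalization in \cite[Theorem 1.4]{jain2021singularity}: whether the threshold is $t/\sqrt{n}$ and whether the additive term is $2(|\xi|_\infty+\epsilon')^n$ or merely $(|\xi|_\infty+\epsilon')^n$. Either form is absorbed by the same rescaling argument.
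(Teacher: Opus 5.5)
Your route is the paper's route. The paper gives no argument beyond invoking \cite[Theorem 1.4]{jain2021singularity}, and your conversion of an additive slack into the factor $(1+\epsilon)^n$ via $\epsilon'=\tfrac{\epsilon}{2}|\xi|_\infty$ is correct. (The corollary does not assume $|\xi|_\infty<1$, but the point-mass case is trivial: the right-hand side is then at least $1$.)

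The issue you flag yourself is a genuine gap: as written, your argument proves a weaker statement than the one displayed. The corollary fixes $C_\xi$ \emph{before} $\epsilon$, and only the threshold on $n$ may depend on $\epsilon$. You obtain instead a constant $C(\xi,\epsilon)$.

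No elementary manipulation bridges the two, whether monotonicity in $t$ or combining the bounds for several values of $\epsilon$. Write $p=|\xi|_\infty$ and consider
\[
f(t)=\inf_{\eta>0}\bigl(t/\eta+(1+\eta)^n p^n\bigr).
\]
This function satisfies your conclusion with $C=1/\eta$ for every $\eta$. At $t=\epsilon^2 n(1+\epsilon)^{n-1}p^n$ the infimum is attained at $\eta=\epsilon$, so
\[
f(t)=\epsilon n(1+\epsilon)^{n-1}p^n+(1+\epsilon)^n p^n .
\]
This exceeds $Ct+(1+\epsilon)^n p^n$ whenever $\epsilon<1/C$. So a single constant working for all $\epsilon$ cannot be recovered from $\epsilon$-dependent ones. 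The uniformity has to be supplied by the cited theorem itself, in the form in which the corollary quotes it; if you quote it that way, nothing beyond a change of notation is needed.

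If the source really only gives an $\epsilon$-dependent constant, as you recall, then the corollary as printed overstates its citation and your reformulation is the accurate one. Either way, your remark about the application is right. In the proof of Theorem \ref{thm:main}, $\epsilon$ is fixed first so that $(1+\epsilon)|\xi|_\infty<1$, and only afterwards are $C_\xi$, $n_1$ and $\hat c$ chosen. So the weaker, $\epsilon$-dependent form is all that is actually used.
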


\subsection{From singular values to growth factor}

The standard technique for estimating the growth factor of a random matrix is through estimates for the smallest singular values of leading sub-matrices. The below lemma does not exactly appear anywhere in the literature, and so we provide a short proof for completeness.

\begin{lemma}\label{lm:growth}
Let $A \in \mathbb{R}^{n \times n}$. Then
\[\mathrm{growth}(A) \le 1 + \|A\|_{\max} \max_{k \in [n-1]} k \, \sigma_{\min}^{-1}(A_{[k],[k]}).\]
\end{lemma}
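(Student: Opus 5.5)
The plan is to bound the entries of $L$ and $U$ separately, writing each as a quantity built from a leading principal block, its inverse, and a few entries of $A$. We may assume every $A_{[k],[k]}$ with $k \in [n-1]$ is non-singular. Otherwise some $\sigma_{\min}(A_{[k],[k]}) = 0$, the right-hand side is $+\infty$, and there is nothing to prove. Under this assumption the LU factorization without pivoting exists and is unique, and its entries are given by the standard Schur complement formulas. Throughout, we use $\|A_{[k],[k]}^{-1}\|_2 = \sigma_{\min}^{-1}(A_{[k],[k]})$ and the fact that any row or column segment of $A$ of length $k$ has Euclidean norm at most $\sqrt{k}\,\|A\|_{\max}$.

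\emph{Entries of $U$.} For $j \ge k$, the entry $U_{kj}$ is the $(k,j)$ entry of the Schur complement of $A_{[k-1],[k-1]}$ in $A$:
\[ U_{kj} = a_{kj} - A_{k,[k-1]}\, A_{[k-1],[k-1]}^{-1}\, A_{[k-1],j}. \]
Applying Cauchy--Schwarz and the operator norm gives
\[ |U_{kj}| \le \|A\|_{\max} + (k-1)\,\|A\|_{\max}^2\, \sigma_{\min}^{-1}(A_{[k-1],[k-1]}). \]
For $k = 1$ we simply have $U_{1j} = a_{1j}$. Dividing by $\|A\|_{\max}$ yields $\|U\|_{\max}/\|A\|_{\max} \le 1 + \|A\|_{\max} \max_{m \in [n-1]} m\, \sigma_{\min}^{-1}(A_{[m],[m]})$, since $k-1$ ranges over $[n-1]$ whenever $k \ge 2$.

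\emph{Entries of $L$.} The diagonal entries are $1$. For $i > k$ with $k \in [n-1]$, the entry $L_{ik}$ is the ratio of two Schur complement entries with respect to $A_{[k-1],[k-1]}$, namely the $(i,k)$ entry over the $(k,k)$ entry. Equivalently,
\[ L_{ik} = \frac{\det A_{[k-1]\cup\{i\},[k]}}{\det A_{[k],[k]}}. \]
By Cramer's rule, this is exactly the $k$-th entry of the row vector $\bm{w}^T = A_{i,[k]}\, A_{[k],[k]}^{-1}$, because solving $\bm{w}^T A_{[k],[k]} = A_{i,[k]}$ replaces row $k$ of $A_{[k],[k]}$ by row $i$. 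Hence
\[ |L_{ik}| \le \|\bm{w}\|_2 \le \|A_{i,[k]}\|_2\, \sigma_{\min}^{-1}(A_{[k],[k]}) \le \sqrt{k}\,\|A\|_{\max}\, \sigma_{\min}^{-1}(A_{[k],[k]}) \le k\,\|A\|_{\max}\, \sigma_{\min}^{-1}(A_{[k],[k]}), \]
with $k \in [n-1]$. Combining this with $1$ for the diagonal gives the same bound for $\|L\|_{\max}$. Taking the maximum of the two bounds proves the lemma.

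The main obstacle is the step for $L$: correctly identifying $L_{ik}$ as a single entry of $A_{i,[k]}A_{[k],[k]}^{-1}$ through the determinant-ratio/Cramer's rule identity. The $U$ bound is a direct Schur complement computation.
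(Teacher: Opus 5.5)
Your proposal is correct and follows essentially the same route as the paper: you bound $U$ via the Schur complement formula using Cauchy--Schwarz and the operator norm, and you bound $L$ by identifying $L_{ik}=A_{i,[k]}A_{[k],[k]}^{-1}\bm{e}_k$, which gives the $\sqrt{k}$ bound. You obtain that identity through Cramer's rule where the paper uses cofactor expansion, and you also handle the singular case explicitly, which the paper leaves implicit.
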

\begin{proof}
Let $A_{S,T}$ denote the submatrix of $A$ indexed by the rows $S \subset [n]$ and $T \subset [n]$, and $A = LU$ be the LU factorization of $A$, given by
\[L_{ij} = \begin{cases} 
\displaystyle{\frac{\det(A_{[j-1]\cup\{i\},[j]})}{\det(A_{[j],[j]})}} & i \ge j  \\
0 & i<j 
\end{cases} \qquad \text{and} \qquad U_{ij} = \begin{cases} 
A_{1j} & i = 1 \\
 A_{ij} - A_{i,[i-1]} A_{[i-1],[i-1]}^{-1} A_{[i-1],j} & 1<i \le j \\
0 & i >j
\end{cases}.\]
We may easily bound the largest magnitude entry of $U$ using the smallest singular value
\[\|U\|_{\max} \le \max_{k \in [n-1]} \|A\|_{\max} + k \|A\|_{\max}^2 \, \sigma_{\min}^{-1}(A_{[k],[k]}). \]
For an entry $L_{ij}$, $i>j$, we have
\[L_{ij} = \frac{\det(A_{[j-1]\cup i,[j]})}{\det(A_{[j],[j]})} = \sum_{k =1 }^j A_{ik} (-1)^{j+k} \frac{\det(A_{[j-1],[j]\backslash k})}{\det(A_{[j],[j]})} = \sum_{k=1}^j A_{ik} [A_{[j],[j]}]^{-1}_{kj} = A_{i,[j]} A_{[j],[j]}^{-1} \bm{e}_j,\]
implying that
\[\|L\|_{\max} \le \max \left\{ \|A\|_{\max} \max_{k \in [n-1]} \sqrt{k} \sigma_{\min}^{-1}(A_{[k],[k]}),1 \right\}. \]
From here, our desired result follows immediately
\[\mathrm{growth}(A) = \max\left\{ \|L\|_{\max}, \frac{\|U\|_{\max}}{\|A\|_{\max}} \right\} \le 1 + \|A\|_{\max} \max_{k \in [n-1]} k \, \sigma_{\min}^{-1}(A_{[k],[k]}). \]
\end{proof}

\subsection{Proof of Theorem \ref{thm:main}}
Using Lemmas \ref{lm:simple} and \ref{lm:n_moderate} and Corollary \ref{cor:jss}, we are now prepared to prove Theorem \ref{thm:main}.

\begin{proof}[Proof of Theorem \ref{thm:main}]
By Lemmas \ref{lm:simple} and \ref{lm:n_moderate}, a lower bound for the probability of having all non-vanishing leading principal minors for $n> n_0$ is given by
\[(1-|\xi|_\infty)^{n_0} \prod_{k = n_0+1}^{n} \left(1 - \frac{1}{k^{1/3}}\right) \ge (1-|\xi|_\infty)^{n_0} \exp\{-2 \, n^{2/3}\}.\]
Choose $0 < \delta' < \delta$. Since $|\xi|_\infty < 1$, there exists an $\epsilon > 0$ such that $\gamma := (1+\epsilon)|\xi|_\infty < 1$. For some sufficiently large $n_1 > n_0$ and sufficiently small constant $\hat c$, Corollary \ref{cor:jss} is valid for this $\epsilon$ and $t = \hat c \, k^{-1-\delta'}$, and we have 
\[
\sum_{k = n_1}^\infty \gamma^{k}  + \sum_{k = n_1}^\infty \frac{\hat c \,  C_\xi }{k^{1+\delta'}} <  (1-|\xi|_\infty)^{n_0} \exp\{-2 \, {n_1}^{2/3}\}
\]
giving a lower bound of
\[
(1-|\xi|_\infty)^{n_0} \exp\{-2 \, {n_1}^{2/3}\} - \sum_{k = n_1}^\infty \gamma^{k} -  \sum_{k = n_1}^\infty \frac{\hat c \,  C_\xi }{k^{1+\delta'}} >0
\]
for the probability that all leading principal minors are non-zero and the smallest singular value of all the $k\times k$, $k \ge n_1$, leading principal minors are all at least $\hat c/k^{\frac{3}{2}+\delta'}$. Since $\xi$ is finitely supported, $\|A\|_{\max} \le M$ for some constant $M$. Applying Lemma \ref{lm:growth} gives $\mathrm{growth}(A) \le 1 + \frac{M}{\hat c} n^{\frac{5}{2}+\delta'}$. Taking $n$ large enough so that $1 + \frac{M}{\hat c} n^{\frac{5}{2}+\delta'} \le n^{\frac{5}{2}+\delta}$ and $n^{\frac{3}{2} + \delta'}$ is larger than the inverse of any of the smallest singular values of the leading principal minors of length at most $n_1$ completes the proof.
\end{proof}

\section{LU Factorization of Discrete Random Matrices with Anti-Concentrated Entries}\label{sec:anti_concentrated}

Here we provide a proof of Theorem \ref{thm:p_small}. Again, our technique consists of considering small $(n = 1,2)$, medium $n = O(\log 1/p)$, and large $n$ separately.

\subsection{Bounding $n =1,2$}

Analysis of dimension $n = 1$ is obvious, as $\mathbb{P}\left[\Delta_1(M_1(p)) = 0 \right] = \mathbb{P}\left[\xi_{11} = 0 \right] \le p$. However, already when $n = 2$, some work is required. First, we prove the following helpful lemmas regarding the collision probability $\sum_{\alpha} \mathbb{P}\left[ \xi = \alpha \right]^2$ for two i.i.d. copies of a discrete random variable $\xi$.

\begin{lemma}\label{lm:collision_prob}
Let $X, X',Y,Y'$ be independent discrete random variables taking finitely many values, with $X,\,X'$ and $Y,Y'$ each identically distributed. Then
\[\sum_{\alpha} \mathbb{P}\left[X+Y = \alpha\right]^2 \le \sqrt{\Big(\sum_{\beta} \mathbb{P}\left[X+X' = \beta\right]^2 \Big) \Big( \sum_{\gamma} \mathbb{P}\left[Y+Y' = \gamma\right]^2\Big)}\]
and $\sum_{\beta} \mathbb{P}\left[X+X' = \beta\right]^2$ is a convex function of the weights $\mathbb{P}[X= \alpha]$, $\alpha \in \mathrm{supp}(X)$.
\end{lemma}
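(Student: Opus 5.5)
The natural route is Fourier analysis on the finite abelian structure generated by the supports. For a finitely supported random variable $Z$, Parseval gives $\sum_\alpha \mathbb{P}[Z=\alpha]^2 = \lim_{T\to\infty}\frac{1}{2T}\int_{-T}^{T} |\varphi_Z(t)|^2\,dt$, where $\varphi_Z(t)=\mathbb{E}e^{itZ}$. This holds because $|\varphi_Z(t)|^2=\sum_{\alpha,\alpha'}p_\alpha p_{\alpha'}e^{it(\alpha-\alpha')}$, and the time average of $e^{its}$ is $\mathbf{1}[s=0]$. Everything is a finite sum, so there are no convergence issues.

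\textbf{The inequality.} Apply this identity with $Z=X+Y$, so $\varphi_Z=\varphi_X\varphi_Y$ and the left-hand side equals the time average of $|\varphi_X|^2|\varphi_Y|^2$. Applying the identity instead to $X+X'$ gives $\sum_\beta \mathbb{P}[X+X'=\beta]^2 = \text{avg}\,|\varphi_X|^4$, since $\varphi_{X+X'}=\varphi_X^2$. Likewise $\sum_\gamma \mathbb{P}[Y+Y'=\gamma]^2=\text{avg}\,|\varphi_Y|^4$. Cauchy--Schwarz for the time average (over $[-T,T]$, then let $T\to\infty$) now gives
\[\text{avg}\,|\varphi_X|^2|\varphi_Y|^2\le \sqrt{\text{avg}\,|\varphi_X|^4\cdot \text{avg}\,|\varphi_Y|^4},\]
which is the claimed bound.

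\textbf{Convexity.} With $X$ supported on a fixed finite set and weights $w_\alpha=\mathbb{P}[X=\alpha]$, we have $\varphi_X(t)=\sum_\alpha w_\alpha e^{it\alpha}$, which is linear in $w$. The map $w\mapsto |\varphi_X(t)|^4$ is the composition of the convex, nondecreasing function $s\mapsto s^2$ on $s\ge0$ with the convex function $w\mapsto|\varphi_X(t)|^2$ (a squared modulus of a linear map), so it is convex for each $t$. Averaging over $t$ and taking the limit preserves convexity. Hence $\sum_\beta\mathbb{P}[X+X'=\beta]^2=\text{avg}\,|\varphi_X|^4$ is convex in $w$.

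The main point of care is justifying the limiting time-average identity and that Cauchy--Schwarz passes to the limit. For finite support, the time average over $[-T,T]$ of a trigonometric polynomial converges to its constant coefficient, and the Cauchy--Schwarz inequality holds for each $T$, so passing to the limit is routine. Alternatively, to avoid Fourier analysis, one can work directly with quadruples and expand the left side as $\mathbb{P}[X+Y=\tilde X+\tilde Y]=\mathbb{P}[X-\tilde X=\tilde Y-Y]$ with independent copies, then apply Cauchy--Schwarz to $\sum_s \mathbb{P}[X-\tilde X=s]\,\mathbb{P}[Y-\tilde Y=s]$. This gives the bound with $X-X'$ collision sums, namely $\sum_s \mathbb{P}[X-\tilde X=s]^2=\mathbb{P}[X+\tilde X'=\tilde X+X']$. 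That probability equals $\sum_\beta \mathbb{P}[X+X'=\beta]^2$, so the elementary route also works and can be presented instead.
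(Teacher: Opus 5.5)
Your proposal is correct and follows essentially the paper's argument: Parseval with the convolution theorem, Cauchy--Schwarz applied to $|\varphi_X|^2|\varphi_Y|^2$, and convexity from $|\cdot|^4$ composed with a map linear in the weights, then integrated. The only difference is technical: you use the mean value $\lim_{T\to\infty}\frac{1}{2T}\int_{-T}^{T}$ on $\mathbb{R}$ directly, which avoids the paper's preliminary step of moving the supports into $\mathbb{Z}$ via a Freiman isomorphism and using Fourier series on $[0,2\pi]$ (note that your elementary quadruple-counting alternative proves only the inequality, so you still need the Fourier representation for convexity).
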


\begin{proof}
Here, we are concerned only with probabilities involving pairwise sums of $X,X',Y,Y'$, and so, by the Frieman isomorphism lemma \cite[Lemma 5.25]{tao2006additive}, we may assume that $X,X',Y,Y'$ are all supported on $\mathbb{Z}$.
By Parseval's theorem and the discrete convolution theorem \cite[Section 2.9]{oppenheim1999discrete},
\begin{align*}
    \sum_{\alpha} \mathbb{P}\left[X+Y = \alpha\right]^2 &= \frac{1}{2 \pi} \int_{0}^{2 \pi} \Big| \sum_{\alpha} \mathbb{P}\left[X+Y = \alpha\right] e^{i \alpha \theta} \Big|^2 \, d \theta \\
    &= \frac{1}{2 \pi} \int_{0}^{2 \pi} \Big| \sum_{\beta} \mathbb{P}\left[X= \beta \right] e^{i \beta \theta} \Big|^2  \Big| \sum_{\gamma} \mathbb{P}\left[Y = \gamma \right] e^{i \gamma \theta} \Big|^2 \, d \theta \\
    &\le \sqrt{\Big( \frac{1}{2 \pi} \int_{0}^{2 \pi} \Big| \sum_{\beta} \mathbb{P}\left[X= \beta \right] e^{i \beta \theta} \Big|^4 \, d \theta \Big) \Big(\frac{1}{2 \pi} \int_{0}^{2 \pi}  \Big| \sum_{\gamma} \mathbb{P}\left[Y = \gamma \right] e^{i \gamma \theta} \Big|^4 \, d \theta  \Big)} \\
    &= \sqrt{\Big( \frac{1}{2 \pi} \int_{0}^{2 \pi} \Big| \sum_{\delta} \mathbb{P}\left[X + X' = \delta \right] e^{i \delta \theta} \Big|^2 \, d \theta \Big) \Big(\frac{1}{2 \pi} \int_{0}^{2 \pi}  \Big| \sum_{\epsilon} \mathbb{P}\left[Y + Y' = \epsilon \right] e^{i \epsilon \theta} \Big|^2 \, d \theta  \Big)} \\
    &= \sqrt{\Big(\sum_{\delta} \mathbb{P}\left[X+X' = \delta\right]^2 \Big) \Big( \sum_{\epsilon} \mathbb{P}\left[Y+Y' = \epsilon\right]^2\Big)}.
\end{align*}
Using the same argument,
\[\sum_{\delta} \mathbb{P}\left[X+X' = \delta\right]^2 = \frac{1}{2 \pi} \int_{0}^{2 \pi} \Big| \sum_{\beta} \mathbb{P}\left[X= \beta \right] e^{i \beta \theta} \Big|^4 \, d \theta.\]
From here, convexity follows quickly, as $\sum_{\beta} \mathbb{P}\left[X= \beta \right] e^{i \beta \theta}$ is a linear function of $\mathbb{P}\left[X= \beta \right]$, $|\cdot|^4$ is convex, the convex function of a linear function is convex, and integrals of convex functions are convex.
\end{proof}

\begin{lemma}\label{lm:grid}
Let $s_1<\ldots<s_n$ and $C_{ij} = \left| \{ (k,\ell) \, | \, s_i + s_j = s_k + s_{\ell}\}\right|$. Then
\[\phi(m):= \left| \{ (i,j) \, | \, C_{ij} \ge m \} \right| \le n^2 - m(m-1).\]
In addition, if $X$ and $X'$ are i.i.d. discrete random variables supported on $\le n$ points with $\max_{\alpha} \mathbb{P}\left[ X = \alpha \right] \le p$, then
\[\sum_{\alpha} \mathbb{P}\left[ X + X' = \alpha \right]^2 \le \frac{(2n^2+1)n p^4}{3}.\]
\end{lemma}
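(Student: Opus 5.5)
The plan is to bound each $C_{ij}$ by an index-based quantity, read off $\phi(m)$ by counting index pairs, and then reduce the collision probability to $\sum_{i,j}C_{ij}$. Throughout, pairs are ordered and indices lie in $[n]$.

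\textbf{Step 1: two elementary bounds on $C_{ij}$.} For a fixed sum, $k$ determines $\ell$ because the $s$'s are distinct, so $C_{ij}\le n$. For the second bound, suppose $s_k+s_\ell=s_i+s_j$. We cannot have both $k>i$ and $\ell>j$, since then $s_k+s_\ell>s_i+s_j$. Hence $k\le i$ or $\ell\le j$.
\begin{itemize}
\item The case $k\le i$ contributes at most $i$ pairs, since $k$ determines $\ell$.
\item The case $\ell\le j$ contributes at most $j$ pairs.
\item The pair $(i,j)$ itself lies in both cases.
\end{itemize}
Therefore $C_{ij}\le i+j-1$. Applying the same argument to the reversed sequence $-s_n<\dots<-s_1$ gives $C_{ij}\le (n+1-i)+(n+1-j)-1$.

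\textbf{Step 2: the bound on $\phi(m)$.} Let $1\le m\le n$. Every pair with $i+j\le m$ has $C_{ij}\le m-1<m$. There are $\sum_{t=2}^{m}(t-1)=m(m-1)/2$ such pairs.

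Symmetrically, every pair with $i+j\ge 2n+2-m$ has $C_{ij}<m$, and there are also $m(m-1)/2$ such pairs. The two regions are disjoint because $m<2n+2-m$ when $m\le n$. So at least $m(m-1)$ pairs have $C_{ij}<m$, which gives $\phi(m)\le n^2-m(m-1)$.

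For $m>n$ we simply have $\phi(m)=0$ by the bound $C_{ij}\le n$. This is the regime used in Step 3. I would remark that the stated inequality is intended for $m\le n$, since its right-hand side is negative for $m>n$.

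\textbf{Step 3: the collision estimate.} Let $s_1<\dots<s_{n'}$ be the support of $X$, with $n'\le n$ and weights $p_i\le p$. Expanding,
\[
\sum_\alpha \mathbb{P}[X+X'=\alpha]^2=\sum_{(i,j)}\ \sum_{(k,\ell):\,s_k+s_\ell=s_i+s_j} p_ip_jp_kp_\ell\le p^4\sum_{(i,j)}C_{ij}.
\]
Write $\sum_{(i,j)}C_{ij}=\sum_{m\ge1}\phi(m)$, where the sum effectively runs over $m\le n'$. Step 2 then gives
\[
\sum_{(i,j)}C_{ij}\le \sum_{m=1}^{n'}\bigl(n'^2-m(m-1)\bigr)=n'^3-\frac{(n'+1)n'(n'-1)}{3}=\frac{n'(2n'^2+1)}{3}.
\]
This is increasing in $n'$, so it is at most $n(2n^2+1)/3$, which yields the claimed bound. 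The convexity part of Lemma~\ref{lm:collision_prob} is not needed for this crude estimate.

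The main obstacle is Step 1, specifically the inequality $C_{ij}\le i+j-1$ obtained from the observation that $k>i$ and $\ell>j$ cannot hold simultaneously. After that, the argument is just counting and summing.
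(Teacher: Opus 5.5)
Your proof is correct and follows essentially the same route as the paper. Your bound $C_{ij}\le\min\{i+j-1,\,2n+1-i-j\}$ is exactly the paper's condition $|(n-j)-(i-1)|\le n-m$, you count the same $m(m-1)$ excluded pairs, and your expansion $\sum_\alpha \mathbb{P}[X+X'=\alpha]^2\le p^4\sum_{(i,j)}C_{ij}=p^4\sum_m\phi(m)$ is a cleaner form of the paper's grouping by $\phi(m)-\phi(m+1)$. Your remark that the $\phi(m)$ inequality only holds for $m\le n$ (with $\phi(m)=0$ beyond) is also right, and it matches how the paper actually uses the bound.
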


\begin{proof}
If $s_i \le s_j$, $s_{k} \le s_\ell$, $i \ne k$, and $s_i + s_j = s_k + s_{\ell}$, then either $i>k$ and $j<\ell$, or $i <k$ and $j > \ell$. Therefore, in order for $C_{ij} \ge m$, for $i\le j$, we must have
\[ \left| (n - j) - (i-1) \right| \le n -m. \]
Exactly $m(m-1)$ pairs fail to satisfy this condition for $m \le n$, and every pair fails to satisfy this condition for $m >n$. 

Now, consider $(X,X')$, supported on $\{s_1,\ldots,s_n\}^2$. We have $\mathbb{P}[(X,X') = (s_i,s_j)]\le p^2$ and
\[\sum_{\alpha} \mathbb{P}\left[ X + X' = \alpha \right]^2 \le \sum_{m=1}^n \frac{\phi(m)-\phi(m+1)}{m}\left(m p^2\right)^2 = p^4 \sum_{m=1}^n \phi(m). \]
By our above bound on $\phi(m)$,
\[\sum_{\alpha} \mathbb{P}\left[ X + X' = \alpha \right]^2 \le p^4 \sum_{m=1}^n n^2 - m(m-1)=\frac{(2n^2+1)n p^4}{3}.\]
\end{proof}

We are now prepared to provide a tight estimate for the case $n = 2$.

\begin{lemma}\label{lm:p_small_2}
\[\mathbb{P}\left[ \Delta_2(M_n(p)) = 0 \, | \, \Delta_1(M_n(p)) \ne 0 \right] \le \frac{2}{3} p +O(p^2)\]
\end{lemma}

\begin{proof}
Conditioning on $\Delta_1(M_n(p)) \ne 0$ is equivalent to replacing $\xi_{11}$ by a new random variable with $\xi'_{11}$ satisfying $|\xi'_{11}|_\infty \le p/(1-p) \le p + O(p^2)$. Therefore, we may safely ignore the conditioning in the lemma statement. In addition, we may also assume that the entries $\xi_{ij}$ are finitely supported, at negligible cost. The probability that any entry is zero and causes a vanishing minor is bounded by $O(p^2)$, allowing us to condition on all entries being non-zero. Let $\eta_{ij} = \log |\xi_{ij}|$. Because ignoring signs strictly upper bounds the probability of a multiplicative collision, we can apply Cauchy-Schwarz to obtain,
\begin{align*}
    \mathbb{P}\left[ \Delta_2(M_n(p)) = 0 \right] &= \sum_{\alpha} \mathbb{P}\left[\eta_{11}+\eta_{22} = \alpha \right] \mathbb{P}\left[\eta_{12}+\eta_{21} = \alpha \right] \\
    &\le \left( \sum_{\alpha} \mathbb{P}\left[\eta_{11}+\eta_{22} = \alpha\right]^2\right)^{1/2} \left( \sum_{\alpha} \mathbb{P}\left[\eta_{12}+\eta_{21} = \alpha\right]^2\right)^{1/2}, 
\end{align*} 
and so it suffices to bound $\sum_{\alpha} \mathbb{P}\left[\eta_{11}+\eta_{22} = \alpha\right]^2$. By Lemma \ref{lm:collision_prob}, we may instead bound the quantity $\sum_{\alpha} \mathbb{P}\left[\eta_{11}+\hat \eta_{11} = \alpha\right]^2$, where $\eta_{11}$ and $\hat \eta_{11}$ are independent and identically distributed. In addition, also by Lemma \ref{lm:collision_prob}, $\sum_{\alpha} \mathbb{P}\left[\eta_{11}+\hat \eta_{11} = \alpha\right]^2$ is a convex function of the weights $\mathbb{P}[\eta_{11} = \beta]$. Therefore, it is maximized at a boundary point, i.e., the weights of all support points equal either $p$ or $0$, save for a single support point, which has weight $1-\lfloor 1/p\rfloor p$. Applying Lemma \ref{lm:grid} with $\lceil 1/p \rceil$ support points completes the proof
\[\sum_{\alpha} \mathbb{P}\left[\eta_{11}+\hat \eta_{11} = \alpha\right]^2 \le \frac{(2\lceil 1/p \rceil^2+1)\lceil 1/p \rceil p^4}{3} = \frac{2}{3} p + O(p^2).\]
\end{proof}

\subsection{Bounding $n$ moderate} When $n = O(\log 1/p)$ and greater than $2$, we may make use of the following generalization of the Elkes-Szab{\'o} theorem for the intersection of semi-algebraic varieties and grids to higher dimensions.

\begin{theorem}[{\cite[Theorem A]{chernikov2024model}}]\label{thm:semi_alg}
Let $s \ge 4$, $Q \subset \R^{s}$ be semi-algebraic, of description complexity $D$, and such that the projection of $Q$ to any $s-1$ coordinates is finite-to-one. Then exactly one of the following holds:
\begin{enumerate}
    \item There exists a constant $c$ depending only on $s,D$ such that: for any $n \in \mathbb{N}$ and finite $S_i \subset \R$ with $|S_i| = n$ for $i \in [s]$ we have $|Q \cap (S_1 \times \ldots \times S_s)| \le c n^{s-4/3}$. 
    \item There exists open sets $U_i \subset \mathbb{R}$, $i \in [s]$, an open set $V \subset \mathbb{R}$ containing $0$, and analytic bijections with analytic inverses $\pi_i:U_i \rightarrow V$ such that
   $\pi_1(x_1) + \ldots + \pi_s(x_s) = 0 \, \Leftrightarrow \, Q(x_1,\ldots,x_s)$
    for all $x_i \in U_i$, $i \in [s]$.
\end{enumerate}
\end{theorem}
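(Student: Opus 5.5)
Since this is \cite[Theorem A]{chernikov2024model}, the paper will simply cite it. The following is the route I would take to prove it from scratch. It combines an incidence bound in the o-minimal setting with a group-configuration argument.

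\emph{Set-up.} The hypothesis that every projection of $Q$ to $s-1$ coordinates is finite-to-one forces $\dim Q \le s-1$. Assume first that $\dim Q = s-1$; otherwise I expect the count to be much smaller and case (1) to hold trivially. Split the coordinates as $(x_1,x_2)$ and $(x_3,\ldots,x_s)$. For each $\bm{b} = (x_3,\ldots,x_s) \in S_3\times\cdots\times S_s$, the fiber $Q_{\bm{b}} \subset \R^2$ is a curve of complexity bounded in terms of $D$. Then $|Q\cap(S_1\times\cdots\times S_s)|$ is the number of incidences between $N=n^2$ grid points of $S_1\times S_2$ and $M=n^{s-2}$ curves.

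\emph{Non-degenerate case.} Suppose the curve family behaves like pseudo-lines: two distinct fibers $Q_{\bm{b}}, Q_{\bm{b}'}$ meet in $O_{D}(1)$ points, and no curve is repeated more than $O(1)$ times. The Pach--Sharir form of Szemer\'edi--Trotter, which holds for semi-algebraic families via cuttings, then gives
\[
I \lesssim N^{2/3}M^{2/3} + N + M = n^{2s/3} + n^2 + n^{s-2}.
\]
This is at most $n^{s-4/3}$ exactly when $s \ge 4$, which explains the threshold $s\ge 4$ and the exponent $4/3$. More robustly, I would apply a Zarankiewicz-type bound for semi-algebraic graphs that are $K_{k,k}$-free (Fox--Pach--Sheffer--Suk--Zahl). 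This only needs the incidence graph to avoid large complete bipartite subgraphs, and it gives the same exponent. So case (1) holds unless the incidence graph contains $K_{k,k}$ for arbitrarily large grids, i.e.\ unless many $\bm{b}$'s share infinitely many points of their fibers. By o-minimal cell decomposition this means the fibers coincide along a common arc.

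\emph{Degenerate case (the main obstacle).} Here I must extract the additive structure in (2). If fibers coincide generically along arcs, define on a box an equivalence relation $\bm{b}\sim\bm{b}'$ iff $Q_{\bm{b}}$ and $Q_{\bm{b}'}$ agree near a generic point. Since fibers are curves, the quotient is locally one-dimensional, so $(x_3,\ldots,x_s)$ enters $Q$ only through a single definable parameter $t=\tau(x_3,\ldots,x_s)$. Repeating the argument with every other splitting of the coordinates, one obtains a ternary relation among $x_1$, $x_2$, and $t$ in which each coordinate is generically determined by the other two. These relations compose consistently, and that consistency is exactly a Zilber--Hrushovski group configuration in the o-minimal structure. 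By the group configuration theorem for o-minimal structures (Peterzil--Starchenko, Eleftheriou--Peterzil--Ramakrishnan), the configuration yields a one-dimensional definable local group acting locally on each coordinate. Any one-dimensional local Lie group is locally analytically isomorphic to $(\R,+)$. Transporting along this isomorphism and inducting on the splitting produces analytic bijections $\pi_i\colon U_i\to V$ with
\[
Q(x_1,\ldots,x_s) \iff \pi_1(x_1)+\cdots+\pi_s(x_s)=0 .
\]
I expect the hardest parts to be verifying the configuration's non-degeneracy conditions (genericity and the dimension counts) and checking that the local isomorphisms obtained from different splittings are compatible, so that the additive decomposition is global on $U_1\times\cdots\times U_s$.

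\emph{Exclusivity.} Finally, (1) and (2) cannot both hold. Take $S_i=\pi_i^{-1}(\{1,\ldots,n\}\cdot\varepsilon)$ for small $\varepsilon>0$. Then the number of solutions of $\pi_1(x_1)+\cdots+\pi_s(x_s)=0$ on $S_1\times\cdots\times S_s$ is $\gtrsim n^{s-1}$, which exceeds $c\,n^{s-4/3}$ for large $n$.
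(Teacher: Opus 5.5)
The paper gives no proof of this statement; it is quoted from \cite{chernikov2024model}, so your sketch has to stand on its own. The two-branch architecture is sensible: an incidence or Zarankiewicz bound when the fibres are in general position, a group configuration otherwise. But the criterion you use to separate the branches, which is the real content of the theorem, is wrong. You argue that (1) holds unless the incidence graph for the splitting $\{1,2\}\,|\,\{3,\dots,s\}$ contains large $K_{k,k}$, and that in that case (2) follows. Two things fail.

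First, large $K_{k,k}$ only shows that \emph{some} fibres share an arc. This can happen over a lower-dimensional set of parameters. You must delete the pairs $(a,b)\in Q$ with $a$ interior to an arc shared by $Q_b$ and some $Q_{b'}$, $b'\neq b$, bound what remains, and analyse the deleted set separately. Instead you silently pass from ``some fibres coincide'' to ``fibres coincide generically''.

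Second, and more seriously, even generic degeneracy of one splitting does not give (2). Take $s=4$ and $Q=\{x_1+x_2=x_3x_4,\ x_3,x_4>0\}$, which is fibre-algebraic. Every fibre over $(x_3,x_4)$ is a line $x_1+x_2=t$ with $t=x_3x_4$, so your one-parameter reduction exists. Yet (2) fails. It would give $\pi_1(x_3x_4-x_2)+\pi_2(x_2)+\pi_3(x_3)+\pi_4(x_4)=0$ on an open set. Applying $\partial_{x_2}\partial_{x_3}$ gives $\pi_1''\equiv 0$, and then $\partial_{x_3}\partial_{x_4}$ kills the slope of $\pi_1$. Here (1) holds only because the splitting $\{1,3\}\,|\,\{2,4\}$ is non-degenerate: the lines $x_1=x_4x_3-x_2$ are pairwise distinct, and Szemer\'edi--Trotter gives $O(n^{8/3})$. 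So you must prove two things: (1) holds on any full-dimensional piece of $Q$ where \emph{some} splitting is non-degenerate, and the group configuration may be invoked only on a common open piece where \emph{every} splitting is degenerate. Your phrase ``repeating the argument with every other splitting'' presupposes this rather than proving it.

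For $s\ge 5$ there is a further gap. ``Since fibres are curves, the quotient is locally one-dimensional'' is a non sequitur: the dimension of each curve says nothing about the dimension of the family of distinct curves. Consider $Q=\{x_2=x_1(x_3+x_4)+x_5,\ x_1,x_3,x_4>0\}$, which is fibre-algebraic. Its fibres over $(x_3,x_4,x_5)$ are lines, each repeated along a one-dimensional set of parameters, but the family of distinct lines is $2$-dimensional. Condition (2) fails: $\partial_{x_5}$ then $\partial_{x_1}$ forces $\pi_2$ affine, and then $\partial_{x_1}\partial_{x_3}$ gives a contradiction. In fact every pair/triple splitting of this $Q$ has repeated fibres, so no splitting of $Q$ itself is $K_{k,k}$-free even after discarding shared arcs, although (1) holds.

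This intermediate case needs its own count. Suppose the distinct fibre germs form a $d$-dimensional family. Each germ is hit by $O(n^{s-2-d})$ grid parameters. A dyadic, multiplicity-weighted Pach--Sharir bound with $d$ degrees of freedom then gives $O(n^{s-2+d/(2d-1)}+n^{s-2}\log n)$, which is $O(n^{s-4/3})$ exactly when $d\ge2$. Only $d=1$ in every splitting should go to the group configuration.

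The same point corrects your non-degenerate branch. For $s\ge 5$, two points lie on an $(s-4)$-dimensional family of fibres, so the $N^{2/3}M^{2/3}$ bound is not available. The case $d=s-2$ of the estimate above still gives $n^{s-4/3}$.

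Finally, in the exclusivity argument, if every $\pi_i(x_i)\in\varepsilon\{1,\dots,n\}$ then the sum is positive and there are no solutions. Use sets symmetric about $0$, with $\varepsilon\lesssim 1/n$ so that they lie in $V$.
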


In particular, we aim to apply the above theorem to the intersection of the set of singular $3\times 3$ matrices and a grid $ \prod_{i,j=1}^3 S_{ij} \subset \mathbb{R}^{3 \times 3}$. First, we prove the following result regarding a $C^2$ function whose zero level set locally matches that of a linear function.

\begin{lemma}\label{lm:linear}
Let $f$ be $C^2$ in an open neighborhood of $0 \in \mathbb{R}^n$, $g$ be a non-zero linear function on $\mathbb{R}^n$, $f(0) = g(0) = 0$, $\nabla f(0) \ne 0$, and, in an open neighborhood of $0$, $f(x) = 0$ if and only if $g(x) = 0$. Then there exists a $C^1$ function in an open neighborhood of $0$, $u$, with $f(x) = g(x) u(x)$ and $u(0) \ne 0$.
\end{lemma}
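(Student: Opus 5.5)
The plan is to reduce to the case where $g$ is a coordinate function and then apply Hadamard's lemma along that coordinate. After a linear change of variables, which preserves the $C^2$ regularity of $f$ and the linearity of $g$, we may assume $g(x) = x_1$. The hypothesis then says that, near $0$, $f(x)=0$ exactly when $x_1=0$. In particular $f(0,x')=0$ for all small $x'=(x_2,\dots,x_n)$. Differentiating in $x'$ gives $\partial_{x_j} f(0)=0$ for $j\ge 2$. Since $\nabla f(0)\neq 0$, it follows that $\partial_{x_1} f(0)\neq 0$.

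Next, define $u$ via the integral form of Hadamard's lemma on a small ball (which is convex):
\[ f(x) = f(x) - f(0,x') = \int_0^1 \frac{d}{dt} f(tx_1,x')\,dt = x_1\int_0^1 \partial_{x_1} f(tx_1,x')\,dt. \]
Set $u(x) := \int_0^1 \partial_{x_1} f(tx_1,x')\,dt$. Then $f = g\,u$ on the neighborhood, and $u(0)=\partial_{x_1}f(0)\neq 0$.

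It remains to show $u$ is $C^1$. Since $f$ is $C^2$, the integrand $\partial_{x_1} f(tx_1,x')$ is $C^1$ in $(t,x)$. On a compact neighborhood its derivatives in $x$ are continuous and hence bounded, so differentiation under the integral sign applies and $u\in C^1$. This is the only place the $C^2$ hypothesis is used. It is also why $u$ is only claimed to be $C^1$: for $x_1\ne0$ we have $u=f/x_1$, which is as smooth as $f$, but across the hyperplane $x_1=0$ the integral formula loses one derivative.

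I expect no serious obstacle. The one point needing care is that the zero-set hypothesis is used in only one direction: $g(x)=0$ implies $f(x)=0$, which yields $f(0,x')\equiv0$. The converse direction, together with $\nabla f(0)\neq0$, is automatically consistent and is not needed beyond ensuring $u(0)\neq0$. Finally, after undoing the linear change of coordinates, $u\circ T^{-1}$ remains $C^1$ with nonzero value at the origin.
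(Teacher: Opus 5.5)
Your proposal is correct and follows essentially the same route as the paper. Both make a linear change of coordinates so that $g$ becomes a coordinate function, then apply Hadamard's lemma along that coordinate to write $f=g\,u$ with $u(x)=\int_0^1 \partial_{x_1} f(tx_1,x')\,dt$, which is $C^1$ because $f$ is $C^2$. Your justification of $u(0)=\partial_{x_1}f(0)\neq 0$ (the tangential derivatives vanish because $f(0,x')\equiv 0$, so $\nabla f(0)\neq 0$ forces the normal derivative to be non-zero) is a slightly more explicit version of the paper's tangent-space remark.
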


\begin{proof}
Because $g$ is linear, there is a change of basis $\Phi$ of $x$ such that, for $\alpha \in \mathbb{R}$ and $y \in \mathbb{R}^{n-1}$, $\Phi(\alpha,y) = x$ and $\alpha = g(x)$. Let $F(\alpha,y) = f(\Phi(\alpha,y))$. Near the origin, we have $F(0,y) = 0$, $F(0,0) = 0$, $\nabla F(0,0) \ne 0$, $F^{-1}(0) = \{\alpha = 0\}$, and 
\[F(\alpha,y) = F(\alpha,y) - F(0,y) = \int_{0}^\alpha \frac{\partial F}{\partial \alpha} (\beta,y) \, d \beta = \alpha \int_{0}^1 \frac{\partial F}{\partial \alpha} (\tau \alpha, y) \, d \tau. \]
Let $w(\alpha,y) = \int_{0}^1 \frac{\partial F}{\partial \alpha} (\tau \alpha, y) \, d \tau$ and note that, because $F$ is $C^2$ near the origin, $w$ is also $C^1$ near the origin. Letting $u(x) = w(g(x),\Phi^{-1}(x)|_{y})$, we have
\[f(x) = F(g(x),\Phi^{-1}(x)|_{y}) = g(x) u(x). \]
Finally, we note that
\[u(0) = \int_{0}^1\frac{\partial F}{\partial \alpha} (0, 0) \, d \tau = \frac{\partial F}{\partial \alpha} (0, 0)\]
and the tangent space of the zero level set at the origin is $\{ (\alpha, y) \, | \, \alpha = 0\}$, therefore, $\frac{\partial F}{\partial \alpha} (0, 0) \ne 0$.
\end{proof}

\begin{lemma}\label{lm:det_grid}
Let $\mathcal{S}^* = \{A \in \mathbb{R}^{3 \times 3} : \det(A) = 0 \text{ and no } 2 \times 2 \text{ minor is zero}\}$. There exists a constant $c$ such that, for any $S_{ij} \subset \mathbb{R}$ with $|S_{ij}| = n$ for $i,j  \in [3]$, we have $\big|\mathcal{S}^* \cap \big(\prod_{i,j=1}^3 S_{ij} \big) \big| \le c n^{23/3}$.
\end{lemma}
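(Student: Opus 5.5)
We would prove Lemma \ref{lm:det_grid} by applying Theorem \ref{thm:semi_alg} with $s=9$ to $Q=\mathcal{S}^*\subset\mathbb{R}^{9}$, and then ruling out the second alternative. Since $9-4/3=23/3$, the first alternative is exactly the bound we want.

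The hypotheses of Theorem \ref{thm:semi_alg} are routine to check. $\mathcal{S}^*$ is semi-algebraic of bounded description complexity: it is cut out by one polynomial equation $\det A=0$ and nine polynomial inequations (the $2\times 2$ minors are non-zero). For the finite-to-one condition, fix all entries except $a_{ij}$. Cofactor expansion gives $\det A = a_{ij}\,C_{ij} + R$, where $C_{ij}$ is, up to sign, the complementary $2\times 2$ minor and $R$ does not involve $a_{ij}$. On $\mathcal{S}^*$ we have $C_{ij}\neq 0$, so $a_{ij}$ is uniquely determined. Hence the projection to any $8$ coordinates is injective.

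The main step is showing that alternative (2) cannot hold. Suppose there are open intervals $U_{ij}$ and analytic bijections $\pi_{ij}$ with $\sum_{i,j}\pi_{ij}(a_{ij})=0$ if and only if $A\in\mathcal{S}^*$ on $\prod U_{ij}$. Since the relation is nonempty near some point $A_0\in\mathcal{S}^*$, the $2\times2$ minors are nonzero there, so locally $\mathcal{S}^*=\{\det=0\}$.

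We then pass to linear coordinates. Set $b_{ij}=\pi_{ij}(a_{ij})$; each $\pi_{ij}$ is an analytic diffeomorphism, so we may write $a_{ij}=h_{ij}(b_{ij})$. Near $A_0$, the zero set of $f(b):=\det(h_{ij}(b_{ij}))$ coincides with the zero set of the linear function $g(b)=\sum b_{ij}$. Also $\nabla f\neq0$, because $\partial\det/\partial a_{ij}=\pm C_{ij}\neq0$ and $h_{ij}'\neq 0$. Lemma \ref{lm:linear} then gives $f=g\,u$ with $u$ of class $C^1$ and $u(0)\neq0$.

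We extract a contradiction from mixed partial derivatives. From $f=gu$ we get $\partial_{b_{ij}}f = u + g\,\partial_{b_{ij}}u$, so on $\{g=0\}$ all nine first partials of $f$ are equal. Concretely, $\pm C_{ij}(A)\,h_{ij}'(b_{ij})$ is independent of $(i,j)$ on the hypersurface. Take two entries in the same row, $(1,1)$ and $(1,2)$. The ratio $C_{11}/C_{12}$ must then equal $-h_{12}'(b_{12})/h_{11}'(b_{11})$, a function of $a_{11},a_{12}$ alone, on an open piece of $\{\det=0\}$. But $C_{11}$ and $C_{12}$ are built from rows $2$ and $3$ only. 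Along the singular hypersurface, fix $a_{11},a_{12}$ and solve for $a_{13}$ via the finite-to-one property. We may then vary the entries of rows $2,3$ freely in an open set while $C_{11}/C_{12}$ changes; for instance, perturb $a_{22}$, which appears in $C_{11}=a_{22}a_{33}-a_{23}a_{32}$ but not in $C_{12}$. This contradicts the ratio depending only on $a_{11},a_{12}$, so alternative (2) fails.

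This last step is where we expect the main obstacle. We have to make precise that the perturbation stays in an open subset of the hypersurface inside $\prod U_{ij}$, and that the identity holds on all of that neighbourhood rather than at a single point. If needed we can simplify the analysis: instead of the functional-equation bookkeeping, use the equal-gradient consequence, which says that after normalising by the $h'$ factors the normal vector of $\{\det=0\}$ is constant, and compare this with the explicit, non-separable cofactor gradient.
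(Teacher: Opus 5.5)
Your proposal is correct. The setup matches the paper: Theorem \ref{thm:semi_alg} with $s=9$, finite-to-one via the nonvanishing complementary minor, and Lemma \ref{lm:linear} to get $f=gu$. But you rule out alternative (2) by a genuinely different mechanism.

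The paper works at a single point with \emph{second} derivatives. From $f=gu$ it gets $\partial^2_{B_{ij}B_{k\ell}}\det(\pi^{-1}(B))(0)=\partial_{B_{ij}}u(0)+\partial_{B_{k\ell}}u(0)$ and uses that these mixed partials vanish for pairs in a common row or column. The odd cycles (three entries per row) then force $\nabla u(0)=0$, so all mixed partials vanish, contradicting $\hat A\neq 0$.

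You use only \emph{first} derivatives, but along an open piece of the hypersurface. The identity $\partial_{b_{ij}}f=u$ on $\{g=0\}$ says the normal to $\{f=0\}$ in $b$-coordinates is always parallel to $(1,\ldots,1)$. This yields the separable identity $C_{11}/C_{12}=\psi(a_{11},a_{12})$, which is incompatible with the cofactors depending only on rows $2,3$.

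What your route buys:
\begin{itemize}
\item Lemma \ref{lm:linear} is not really needed, since two coinciding regular zero sets already have parallel gradients.
\item No second-order information on $u$ is used.
\item The role of dimension $3$ is transparent: $a_{13}$ is the free entry that absorbs the constraint. In the $2\times 2$ case, fixing $a_{11},a_{12}$ on $\{a_{11}a_{22}=a_{12}a_{21}\}$ pins $a_{22}/a_{21}=a_{12}/a_{11}$, which is exactly the logarithmic example in the Remark after the lemma.
\item Your base point $A_0=(\pi_{ij}^{-1}(0))$ is more direct than the paper's dimension count for $\hat A$, since every point of $\mathcal{S}^*$ already has rank $2$ and all cofactors nonzero.
\end{itemize}
The paper's route, in exchange, is a compact one-point Hessian computation.

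The obstacle you flag closes without difficulty. Since $\det$ is affine in $a_{13}$ with coefficient $\pm C_{13}\neq 0$ near $A_0$, the hypersurface is locally the graph of a continuous function $a_{13}=\varphi(\text{other eight entries})$. So with $a_{11},a_{12}$ fixed and rows $2,3$ in a small ball around those of $A_0$, the resulting matrix stays inside $\prod U_{ij}$ and inside the neighborhood $N$ on which $f=gu$. Moreover, $\partial_{b_{ij}}f=u$ holds at every point of $\{g=0\}\cap N$, not only at $A_0$. Then $C_{11}/C_{12}$ would be constant on an open subset of $\mathbb{R}^6$. But $\partial_{a_{22}}(C_{11}/C_{12})=\pm a_{33}/C_{12}$ cannot vanish identically on an open set, which gives the contradiction.
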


\begin{proof}
We apply Theorem \ref{thm:semi_alg} to the semi-algebraic set $\mathcal{S}^*$. The determinant is linear in any entry, and because no $2 \times 2$ minor is zero for matrices in $\mathcal{S}^*$, fixing any 8 entries of $A \in \mathcal{S}^*$ leaves a non-trivial linear equation for the 9th entry, which has one solution. Thus, the projection of $\mathcal{S}^*$ to any 8 coordinates is at finite-to-one, satisfying the hypothesis of Theorem \ref{thm:semi_alg}.

It suffices to show that $\mathcal{S}^*$ does not satisfy property (2) of Theorem \ref{thm:semi_alg}. Let us suppose property (2) holds. Note that there must exist a matrix $\hat A \in U_1 \times \ldots \times U_9$ with $\rank( \hat A) = 2$ and no vanishing $2 \times 2$ minors, as the subset of matrices violating these conditions has dimension at most 7, while $\mathcal{S}^*$ has dimension 8. Let $B = \pi(A)$, with, without loss of generality, $\pi(\hat A) = 0$. For $B \in V$, $f(B) := \sum_{i,j = 1}^3 B_{ij} =0$ if and only if $\det(\pi^{-1}(B)) = 0$. Because $\rank( \hat A) = 2$, $\nabla \det(\pi^{-1}(0)) \ne 0$, and, by Lemma \ref{lm:linear}, $\det(\pi^{-1}(B)) = f(B) u(B)$ for some $C^1$ function $u$ with $u(0) \ne 0$. Computing the mixed second derivatives of both sides, we find that
\[\frac{\partial^2 \det(\pi^{-1}(B))}{\partial B_{ij} \partial B_{k\ell}} (0) = \frac{\partial u}{\partial B_{ij}}(0) + \frac{\partial u}{\partial B_{k \ell}}(0) \qquad \text{for} \quad (i,j) \ne (k,\ell).\]
However, by the linearity of the determinant in each row and column, we have $\frac{\partial^2 \det(\pi^{-1}(B))}{\partial B_{ij} \partial B_{\ell k}} (0) = 0$ when $(i,j)$ and $(k,\ell)$ are in the same row or column. The resulting system of equations in derivatives of $u$ implies that $\frac{\partial u}{\partial B_{ij}}(0) = 0$ for all $(i,j)$, and therefore all mixed derivatives of $\det(\pi^{-1})$ vanish at $0$. However, this is a contradiction, as $\hat A$ has rank two, and so $\frac{\partial^2 \det(\hat A)}{\partial A_{ij} \partial A_{k,\ell}}$ is non-zero for some $(i,j) \ne (k,\ell)$. Because $\pi_{ij}'(0) \ne 0$ for all $(i,j)$, the chain rule implies that the mixed derivatives of $\det(\pi^{-1}(B))$ cannot all vanish.
\end{proof}

\begin{remark}
The proof of Lemma \ref{lm:det_grid} crucially requires that $n \ge 3$. Note that, by Lemma \ref{lm:p_small_2}, an analogous statement is not true for $n = 2$. In particular, while the mixed derivatives in each row and column do vanish, because there are only two entries in each row and column, this is not sufficient to conclude that all first derivatives vanish. For example, for an entry-wise positive matrix in  $\mathbb{R}^{2 \times 2}$, logarithmic bijections recognize singularity:
\[\det(A) = 0 \; \text{ for } A \in (0,\infty)^{2 \times 2} \; \Longleftrightarrow \; \log(A_{11}) - \log (A_{12}) - \log (A_{21}) + \log(A_{22}) = 0.\]
Now, taking 
\[B = \begin{pmatrix} \phantom{-}\log(A_{11}) & - \log (A_{12}) \\ - \log(A_{21}) & \phantom{-}\log(A_{22}) \end{pmatrix}\]
and $\hat A = \bm{1} \bm{1}^T$, the mixed derivatives of $\det (\pi^{-1}(B)) = e^{B_{11} + B_{22}} - e^{-(B_{12} + B_{21})}$ along a row or column do vanish, but the first derivatives of
\[u(B) = \frac{e^{B_{11} + B_{22}} - e^{-(B_{12} +B_{21})}}{B_{11} + B_{12} + B_{21} + B_{22}},\]
appropriately defined at indefinite points,
do not vanish at $B = 0$: $\partial u/\partial B_{11} (0) = \partial u/\partial B_{22} (0) = 1/2$ and $\partial u/\partial B_{12} (0) = \partial u/\partial B_{21} (0) = -1/2$.
\end{remark}

Using the above lemma, we can now bound $\mathbb{P}[\Delta_n(M_n(p)) = 0\, |\, \Delta_{n-3}(M_n(p)) \ne 0]$.

\begin{lemma}\label{lm:p_small_med_n}
Let $n \ge 3$. Then $\mathbb{P}[\Delta_n(M_n(p)) = 0\, |\, \Delta_{n-3}(M_n(p)) \ne 0] = O(p^{27/26})$.
\end{lemma}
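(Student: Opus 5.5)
We reduce to a $3\times 3$ problem by a Schur complement. Write $A=M_n(p)$ in block form with $A_{11}$ the leading $(n-3)\times(n-3)$ block. Conditional on $\Delta_{n-3}\ne 0$, we have
\[
\Delta_n = \Delta_{n-3}\cdot\det\big(A_{22}-A_{21}A_{11}^{-1}A_{12}\big),
\]
where $A_{22}$ is the bottom-right $3\times 3$ block. We condition on $A_{11},A_{12},A_{21}$. Then the Schur complement is $S=A_{22}-R$ with $R$ a fixed $3\times 3$ matrix, and the nine entries $S_{ij}=\xi_{ij}-R_{ij}$ are independent. Each has $|S_{ij}|_\infty\le p$, since shifting does not change atom sizes. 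So it suffices to bound $\mathbb{P}[\det S=0]$ for a $3\times3$ matrix with independent entries, each of max atom at most $p$, uniformly over the shift.

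Next we split according to whether some $2\times 2$ minor of $S$ vanishes. Fix one of the nine $2\times2$ minors. It vanishes only if, conditioned on the other three of its entries, a single entry hits one value (when its cofactor is nonzero), or else a smaller event occurs; either way each step costs a factor $p$. If the minor vanishes, we still need $\det S=0$. Expanding along the complementary entry, $\det S$ is linear in that entry $S_{k\ell}$ with coefficient equal to that minor, which is zero. So this route needs a finer argument: we take a second, disjoint-entry condition. Concretely, if the minor on rows $\{1,2\}$, columns $\{1,2\}$ vanishes, then $\det S=0$ still requires either all $2\times 2$ minors containing row/column $3$ to degenerate, or a linear equation in $S_{33}$ with coefficient $0$. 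In that case we instead expand in $S_{31}$, whose coefficient is the minor on rows $\{1,2\}$, columns $\{2,3\}$. This gives two independent events, each of probability $\le p$ up to lower-order cases, for a total of $O(p^2)$. Some care is needed to enumerate the subcases (e.g.\ a whole row of minors vanishing), but each degenerate chain forces at least two independent atom hits, yielding $O(p^2)$ in all cases.

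The main case is $S\in\mathcal{S}^*$, and here we use Lemma~\ref{lm:det_grid}. First we truncate: $S_{ij}$ takes at most $m$ values of probability at least $p^{\beta}$, and the rest has mass... This is where we pick parameters. The simplest route, as in Lemma~\ref{lm:p_small_2}, is convexity. The probability $\sum_{A\in\mathcal{S}^*}\prod_{ij}\mathbb{P}[S_{ij}=A_{ij}]$ is multilinear in the weight vectors of each entry. So for each entry separately, the maximum over weight vectors with atoms $\le p$ is attained at an extreme point: $\lfloor 1/p\rfloor$ atoms of weight $p$, plus one remainder atom. Each $S_{ij}$ is then supported on $N=\lceil 1/p\rceil$ points, each of weight $\le p$. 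Lemma~\ref{lm:det_grid} bounds the count of singular grid points by $cN^{23/3}$, so the probability is at most
\[
c\,N^{23/3}p^9=O\big(p^{9-23/3}\big)=O\big(p^{4/3}\big).
\]
The finite-support requirement of the lemma is met by approximating with finitely many atoms at negligible cost. Since $4/3>27/26$, this gives the claim. The stated exponent $27/26$ suggests the authors lose something in the degenerate cases or in the truncation; we would settle for whatever exponent exceeds $1$.

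The main obstacle is the careful case analysis of vanishing $2\times2$ minors. We must ensure that every way $\det S=0$ can occur outside $\mathcal{S}^*$ costs two independent factors of $p$. The multilinear extreme-point reduction for the generic case must also be checked to be legitimate: the count must be of points in a product of sets of size $N$, which holds after padding.
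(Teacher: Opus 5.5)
Your argument is correct. It differs from the paper's only in the generic case $S\in\mathcal{S}^*$.

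\textbf{Shared steps.} The Schur-complement reduction and the $O(p^2)$ bound for configurations with a vanishing $2\times 2$ minor match the paper's. Your expansion in $S_{31}$ or $S_{32}$ when the minor on rows $\{1,2\}$, columns $\{1,2\}$ vanishes is actually more explicit than the paper's sketch. The remaining subcase you flag, where the top $2\times 3$ block has rank at most one, is $O(p^2)$: two vanishing minors sharing a row already force two independent atom hits.

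\textbf{The paper's route in the generic case.} The paper truncates. If some entry takes a ``rare'' value (an atom of mass at most $p^{27/26}$), this costs $O(p^{27/26})$, by linearity of $\det$ in that entry with nonzero cofactor. Otherwise each entry lives on fewer than $p^{-27/26}$ values, and Lemma~\ref{lm:det_grid} gives $c\lceil p^{-27/26}\rceil^{23/3}p^9=O(p^{27/26})$. The exponent $27/26$ is exactly the one that balances these two terms.

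\textbf{Your route.} You use that $\mathbb{P}[S\in\mathcal{S}^*]$ is linear in each entry's law separately, and move the nine laws one at a time to extreme points. This is the same device as the convexity step of Lemma~\ref{lm:p_small_2}, but easier, since linearity is immediate. It is legitimate, and you do not even need the finite-support approximation. For fixed other laws, the functional is $w\mapsto\sum_a w(a)h(a)$ with $h\ge 0$ and $\sum_a h(a)\le 1$. The latter holds because on $\mathcal{S}^*$ each entry is determined by the other eight, its cofactor being a nonzero $2\times 2$ minor. So putting mass $p$ on the $\lfloor 1/p\rfloor$ largest values of $h$, and the remainder on the next one, does at least as well.

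\textbf{Comparison.} Your route avoids the truncation loss and gives $O(p^{4/3})$. This would improve the error term in Theorem~\ref{thm:p_small} to $O(p^{4/3}\log(1/p))$. The paper's truncation avoids the extreme-point step but pays for it in the exponent.
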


\begin{proof}
Let $A = M_n(p)$ be written in block notation as
\begin{align*}
A &=  \begin{pmatrix} B & \bm{x} & \bm{y} & \bm{z} \\ \bm{u}^T & \xi_{11} & \xi_{12} & \xi_{13} \\ \bm{v}^T & \xi_{21} & \xi_{22} & \xi_{23} \\ \bm{w}^T & \xi_{31} & \xi_{32} & \xi_{33} \end{pmatrix} \\
&= \begin{pmatrix} I &  &  &  \\ \bm{u}^T B^{-1} & 1 &  & \\ \bm{v}^T B^{-1} &  & 1 & \\ \bm{w}^T B^{-1} &  & & 1\end{pmatrix} \begin{pmatrix} B & \bm{x} & \bm{y} & \bm{z} \\  & \xi_{11} - \bm{u}^T B^{-1} \bm{x} & \xi_{12} - \bm{u}^T B^{-1} \bm{y} & \xi_{13}- \bm{u}^T B^{-1} \bm{z} \\ & \xi_{21} - \bm{v}^T B^{-1} \bm{x}& \xi_{22} - \bm{v}^T B^{-1} \bm{y}& \xi_{23} - \bm{v}^T B^{-1} \bm{z} \\ & \xi_{31} - \bm{w}^T B^{-1} \bm{x}& \xi_{32} - \bm{w}^T B^{-1} \bm{y}& \xi_{33} - \bm{w}^T B^{-1} \bm{z} \end{pmatrix},
\end{align*}
where $B \in \mathbb{R}^{(n-3)\times(n-3)}$ and $\bm{u},\bm{v},\bm{w},\bm{x},\bm{y},\bm{z} \in \mathbb{R}^{n-3}$. Let us condition on a (non-singular) realization of $B$ and of the vectors $\bm{u},\bm{v},\bm{w},\bm{x},\bm{y},\bm{z}$, and consider the matrix
\[C = \begin{pmatrix}  \xi_{11} - \bm{u}^T B^{-1} \bm{x} & \xi_{12} - \bm{u}^T B^{-1} \bm{y} & \xi_{13}- \bm{u}^T B^{-1} \bm{z} \\  \xi_{21} - \bm{v}^T B^{-1} \bm{x}& \xi_{22} - \bm{v}^T B^{-1} \bm{y}& \xi_{23} - \bm{v}^T B^{-1} \bm{z} \\  \xi_{31} - \bm{w}^T B^{-1} \bm{x}& \xi_{32} - \bm{w}^T B^{-1} \bm{y}& \xi_{33} - \bm{w}^T B^{-1} \bm{z} \end{pmatrix}\]
Then, after conditioning, $C = M_3(p)$ and $\det(A)$ is non-zero if and only if $\det(C)$ is non-zero. Therefore, we may restrict ourselves to the unconditional case of $n = 3$ and the matrix $C$. We decompose our analysis into a variety of cases that, together, bound $\mathbb{P}[\det(C) = 0]$. We have
\[\mathbb{P}[C_{ij} = C_{\ell k} = 0 \text{ for } (i,j) \ne (\ell,k)] = O(p^2)\]
and
\[\mathbb{P}[C_{11} C_{22} = C_{12} C_{21} \ne 0 \text{ and } C_{12} C_{23} = C_{22} C_{13} \ne 0] = O(p^2),\]
and so the probability that $C$ has a vanishing minor of order two and $\det(C) = 0$ is $O(p^2)$. Now, we may restrict ourselves to realizations of $C$ with no vanishing minors of order two, and further divide our analysis based on whether any entry of $C$ equals a ``rare" value. Using the same argument from Lemma \ref{lm:simple},
\[\mathbb{P} \left[\det(C) = 0, \, C_{11}C_{22} \ne C_{12} C_{21},\, C_{33} = \alpha \text{ for } \alpha \text{ s.t. } \mathbb{P}[C_{33} = \alpha] \le p^{27/26} \right] = O(p^{27/26}).\]
If no entry of $C$ equals a ``rare" value, then we may simply use Lemma \ref{lm:det_grid} to obtain
\begin{align*}
\mathbb{P} \left[ \det(C) = 0, \, C_{ij} = \alpha_{ij}\text{ for } \alpha_{ij} \text{ s.t. } \mathbb{P}[C_{ij} = \alpha_{ij}]> p^{27/26} \text{ for } i,j = 1,2,3\right] &\le \left\lceil \frac{1}{p^{27/26}} \right\rceil^{23/3} p^{9} \\
&= O(p^{27/26}).
\end{align*}
\end{proof}

\subsection{Bounding $n$ large} When $n$ is sufficiently large relative to $\log 1/p$, we may simply make use of the following corollary of a result of Bourgain, Vu, and Wood.

\begin{corollary}[Corollary of {\cite[Theorem 1.4]{bourgain2010singularity}}]\label{cor:p_small}
There exists fixed constants $\delta \in (0,1)$ and $n_0 \in \mathbb{N}$ such that $\mathbb{P} \left[ \Delta_n(M_n(p)) = 0 \right] \le \delta^n$
for all $p \le 1/2$ and $n \ge n_0$.
\end{corollary}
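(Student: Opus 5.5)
The plan is to deduce the bound directly from \cite[Theorem 1.4]{bourgain2010singularity}, with no new probabilistic argument. That theorem, as I recall it, applies to $n \times n$ matrices whose entries are independent but not necessarily identically distributed. Each entry is only required to satisfy $\sup_a \mathbb{P}[\xi_{ij} = a] \le p$, and the conclusion is $\mathbb{P}[\det = 0] \le (\sqrt{p} + o_n(1))^n$. The key point is that the $o_n(1)$ term depends only on $p$, not on the individual distributions of the $\xi_{ij}$. The first step is to check this carefully against the source, since Corollary \ref{cor:p_small} must hold uniformly over all matrices $M_n(p)$, and these have no common distribution.

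Second, remove the dependence on $p$. The hypothesis $|\xi_{ij}|_\infty \le p$ with $p \le 1/2$ implies $|\xi_{ij}|_\infty \le 1/2$. So every $M_n(p)$ with $p \le 1/2$ is an instance of the class treated by Bourgain--Vu--Wood with parameter $1/2$. Applying the theorem once, with $p = 1/2$, gives
\[
\mathbb{P}[\Delta_n(M_n(p)) = 0] \le \bigl(2^{-1/2} + o_n(1)\bigr)^n .
\]
Now fix any $\delta \in (2^{-1/2}, 1)$, for instance $\delta = 0.75$. Choose $n_0$ so large that the $o_n(1)$ term is at most $\delta - 2^{-1/2}$ for all $n \ge n_0$; this yields $\delta^n$.

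Third, address the possibility that entries take infinitely many values, which is allowed in the definition of $M_n(p)$. If the cited theorem assumes finite support, I would approximate each $\xi_{ij}$ by a finitely supported variable. Concretely, condition on the event that every entry lies in a large finite set $F_{ij}$ carrying probability at least $1 - \eta$. The conditioned variables have atoms of mass at most $p/(1-\eta)$, which is still below some fixed $p' < 1$ when $p \le 1/2$ and $\eta$ is small, for instance $p' = 0.55$. The theorem with parameter $p'$ then gives $(\sqrt{p'} + o(1))^n$. Letting $\eta \to 0$, the error term $1 - (1-\eta)^{n^2}$ vanishes for each fixed $n$, so the same exponential bound holds. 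Any $\delta \in (\sqrt{0.55}, 1)$ then works.

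I expect the main obstacle to be the first step: confirming that the cited result is genuinely uniform over non-identically distributed entries and over all distributions obeying the atom bound. If it only gives a distribution-dependent $o(1)$, the uniformity over all $p \le 1/2$ claimed in Corollary \ref{cor:p_small} would not follow directly.
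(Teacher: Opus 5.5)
Your proposal is correct and is essentially the paper's own justification. The paper offers nothing beyond the citation of \cite[Theorem 1.4]{bourgain2010singularity}; your steps (apply that bound once with atom bound $1/2$, take any $\delta \in (2^{-1/2},1)$, and condition on finite sets to handle infinitely supported entries) make explicit what the citation leaves implicit, and the point you flag, that the $o_n(1)$ must depend only on the atom bound and not on the individual, non-identical, possibly large-support distributions, is exactly what the paper tacitly takes from the cited theorem, so your argument is complete to the same extent the paper's is.
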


\subsection{Proof of Theorem \ref{thm:p_small}} Using Lemmas \ref{lm:p_small_2} and \ref{lm:p_small_med_n}, and Corollary \ref{cor:p_small}, we are now prepared to prove Theorem \ref{thm:p_small}.

\begin{proof}[Proof of Theorem \ref{thm:p_small}]
Let $\ell = \max\left\{n_0, \left\lceil \frac{2 \log p}{\log \delta} \right\rceil \right\}$, where $n_0$ and $\delta$ are the constants of Corollary \ref{cor:p_small}. We have
\begin{align*}
\mathbb{P}\left[ \Delta_k(M_n(p)) = 0 \text{ for some } k \in [n]\right] &\le \mathbb{P} \left[ \Delta_1(M_n(p)) = 0 \right] + \mathbb{P}\left[ \Delta_2(M_n(p)) = 0 \, | \, \Delta_1(M_n(p)) \ne 0 \right]\\
&\quad + \sum_{k=3}^\ell \mathbb{P}\left[ \Delta_k(M_n(p)) = 0 \, | \, \Delta_{k-3}(M_n(p)) \ne 0 \right] \\
&\quad + \sum_{k =\ell+1}^n \mathbb{P} \left[ \Delta_k(M_n(p)) = 0 \right].
\end{align*}
Clearly, the first term is at most $p$. By Lemma \ref{lm:p_small_2}, the second term is at most $\frac{2}{3}p + O(p^2)$. By Lemma \ref{lm:p_small_med_n}, the third term is at most $O(\ell p^{27/26})$. Finally, by Corollary \ref{cor:p_small}, and noting that our choice of $\ell$ gives $\delta^\ell \le p^2$,
\[\sum_{k = \ell+1}^n \mathbb{P} \left[ \Delta_k(M_n(p)) = 0 \right] \le \sum_{k = \ell+1}^n \delta^k \le \frac{\delta^{\ell+1}}{1-\delta} \le \frac{\delta^\ell}{1-\delta} \le \frac{p^2}{1-\delta} = O(p^2).\]

To verify tightness, note $\mathbb{P}[\Delta_1(M_n(\xi)) = 0] = p + O(p^2)$. Given $\Delta_1(M_n(\xi)) \ne 0$, $\Delta_2(M_n(\xi)) = 0$ is dominated by the collision $\xi_{11}\xi_{22} = \xi_{12}\xi_{21}$ among non-zero entries. Taking base-$2$ logarithms reduces this directly to the uniform additive collision problem of \cref{lm:p_small_2}, giving a probability of $\frac{2}{3}p + O(p^2)$. Since higher-order minors contribute at most $O(p^{27/26})$, the probability of a singular leading principal minor is exactly $\frac{5}{3}p + O(p^{27/26})$.
\end{proof}

\section{Exact Counting of Strongly Non-Singular $\{0,1\}$-Matrices}
\label{sec:sns}

Here we detail an algorithm for exactly counting the number of $n \times n$ strongly non-singular $\{0,1\}$-matrices for $n = 1,2,\ldots,9$ via enumeration. The associated code and computational results are available at the GitHub repository \cite{OrellanaMateo2026Repo}. The naive enumeration of $n \times n$ strongly non-singular $\{0,1\}$-matrices is computationally intractable for $n \ge 9$, as the search space size $2^{n^2}$ grows super-exponentially. To address this, we implement a counting algorithm based on identifying equivalence classes of matrices under row and column permutations. This approach reduces the space sufficiently to compute exact counts up to $n=9$.

\subsection{Equivalence Classes and Recursive Construction}

We define an equivalence relation on the set of $n \times n$ binary matrices $\{0,1\}^{n \times n}$. Two matrices $A, B \in \{0,1\}^{n \times n}$ are equivalent, denoted $A \sim B$, if there exist permutation matrices $P$ and $Q$ such that $A = P B Q$ or $A = P B^T Q$. Let 
\[\Omega_n := \{ A \in \{0,1\}^{n \times n} | \det(\Delta_k(A)) \neq 0 \; \forall k \in [n] \}\]
be the set of $n\times n$ strongly non-singular $\{0,1\}$-matrices. Given the set of equivalence classes $\Omega_n/\sim$ and their sizes, we detail an algorithm to construct $\Omega_{n+1}/\sim$ and their sizes.

We say a matrix $C$ is an extension of $A \in \Omega_n$ if 
\[
C = \begin{pmatrix} A & \mathbf{x} \\ \mathbf{y}^T & \alpha \end{pmatrix}, \quad \text{where } \mathbf{x}, \mathbf{y} \in \{0,1\}^n, \alpha \in \{0,1\}.
\]
The matrix $C$ is strongly non-singular if and only if $A \in \Omega_n$ and $\det(C) = \alpha \cdot \det(A) -  \mathbf{y}^T \operatorname{adj}(A) \mathbf{x} \neq 0$. When this is the case, we say $C$ is a child of $A$, and denote the set of strongly non-singular extensions of a matrix $A$ by $\mathrm{Children}(A)$.

\begin{lemma}[Isomorphism Preservation]
Let $A, B \in \Omega_n$ and $A \sim B$. There exists a bijection $\phi: \mathrm{Children}(A) \to \mathrm{Children}(B)$ such that, for every child $C$ of $A$, $C \sim \phi(C)$.
\end{lemma}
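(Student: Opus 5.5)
The plan is to build $\phi$ directly from the permutations that witness $A \sim B$, by extending them to $(n+1)\times(n+1)$ permutations that fix the last index. The one subtle point is that a general row/column permutation does \emph{not} preserve leading principal minors, so strong non-singularity of $\phi(C)$ cannot simply be transported from $C$. The way around this is to use only two facts: the full determinant is preserved up to sign, and the leading $n\times n$ block of $\phi(C)$ is exactly $B$, which lies in $\Omega_n$ by hypothesis.

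First I would treat the case $B = PAQ$ with $P,Q$ permutation matrices. Set $\hat P = \begin{pmatrix} P & 0 \\ 0 & 1\end{pmatrix}$ and $\hat Q = \begin{pmatrix} Q & 0 \\ 0 & 1\end{pmatrix}$, and define
\[
\phi(C) = \hat P C \hat Q = \begin{pmatrix} PAQ & P\mathbf{x} \\ \mathbf{y}^T Q & \alpha \end{pmatrix} = \begin{pmatrix} B & P\mathbf{x} \\ (Q^T\mathbf{y})^T & \alpha \end{pmatrix}
\]
for $C = \begin{pmatrix} A & \mathbf{x} \\ \mathbf{y}^T & \alpha\end{pmatrix}$. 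Since $P\mathbf{x}, Q^T\mathbf{y} \in \{0,1\}^n$ and $\alpha\in\{0,1\}$, the matrix $\phi(C)$ is an extension of $B$. Its leading $k\times k$ minors for $k\le n$ are those of $B$, which are non-zero because $B\in\Omega_n$. Its full determinant is $\det(\hat P)\det(C)\det(\hat Q) = \pm\det(C) \neq 0$. Hence $\phi(C)\in\mathrm{Children}(B)$, and by construction $C\sim\phi(C)$ via the permutation matrices $\hat P,\hat Q$.

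Next I would treat the case $B = PA^TQ$ with $\phi(C) = \hat P C^T \hat Q$. Here
\[
C^T = \begin{pmatrix} A^T & \mathbf{y} \\ \mathbf{x}^T & \alpha\end{pmatrix},
\]
so $\phi(C)$ has leading block $PA^TQ = B$ and border vectors $P\mathbf{y}$ and $Q^T\mathbf{x}$. Its determinant is again $\pm\det(C)\neq 0$, so the same argument shows $\phi(C)\in\mathrm{Children}(B)$ and $C\sim\phi(C)$ by the transpose clause of the definition of $\sim$.

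Finally, I would prove bijectivity by exhibiting the inverse. In the first case the inverse is $D\mapsto \hat P^T D \hat Q^T$, which maps $\mathrm{Children}(B)$ into $\mathrm{Children}(A)$ by the symmetric argument, since $A = P^TBQ^T$. In the transpose case the inverse is $D\mapsto (\hat P^T D \hat Q^T)^T = \hat Q D^T \hat P$, using $A = QB^TP$. Both compositions are the identity by direct computation. I expect no genuine obstacle beyond stating clearly why a reordering of rows and columns need not preserve strong non-singularity in general, but does here because the permutations fix the last index and only the final determinant is new information.
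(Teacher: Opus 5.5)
Your proof is correct and follows essentially the same route as the paper: extend the witnessing permutations to fix the last index, note that the leading $n\times n$ block of the image is exactly $B$ and the full determinant changes only by a sign, and check the map is invertible. The differences are cosmetic. The paper verifies $\det(D)=\pm\det(C)$ through the Schur complement $\det(B)(\alpha-\mathbf{y}'^T B^{-1}\mathbf{x}')$ rather than by multiplicativity of the determinant, and it only sketches the transpose case via the map $(\mathbf{x},\mathbf{y},\alpha)\mapsto(Q\mathbf{y},P^T\mathbf{x},\alpha)$, which you write out in full.
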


\begin{proof}
By definition, there exist permutation matrices $P$ and $Q$ such that either $A = P B Q$ or $A = P B^T Q$. Without loss of generality, suppose $A = P BQ$ (the case $A = P B^T Q$ is very similar) and let $C$ be an extension of $A$:
\[
C = \begin{pmatrix} A & \mathbf{x} \\ \mathbf{y}^T & \alpha \end{pmatrix}.
\]

We define the corresponding extension of $B$ as $D = \begin{pmatrix} B & \mathbf{x}' \\ \mathbf{y}'^T & \alpha \end{pmatrix}$, where $\mathbf{x}' = P^T \mathbf{x}$ and $\mathbf{y}' = Q \mathbf{y}$. It suffices to show that $C$ is a child if and only if $D$ is a child. Note that $\det(A) = \det(P)\det(B)\det(Q) = \pm \det(B)$, and so
\begin{align*}
\det(D) &= \det(B)(\alpha - \mathbf{y}'^T B^{-1} \mathbf{x}') \\
&= \det(B)(\alpha - (Q\mathbf{y})^T (P^T A Q^T)^{-1} (P^T \mathbf{x})) \\
&= \det(B)(\alpha - \mathbf{y}^T Q^T (Q A^{-1} P) P^T \mathbf{x}) \\
&= \det(B)(\alpha - \mathbf{y}^T A^{-1} \mathbf{x}) = \pm \det(C).
\end{align*}
Thus, $D \in \Omega_{n+1}$ if and only if $C \in \Omega_{n+1}$. In addition, for extended permutations $\mathcal{P} = \begin{pmatrix} P & 0 \\ 0 & 1 \end{pmatrix}$ and $\mathcal{Q} = \begin{pmatrix} Q & 0 \\ 0 & 1 \end{pmatrix}$,
\[
\mathcal{P} D \mathcal{Q} = \begin{pmatrix} P & 0 \\ 0 & 1 \end{pmatrix} \begin{pmatrix} B & P^T \mathbf{x} \\ \mathbf{y}^T Q^T & \alpha \end{pmatrix} \begin{pmatrix} Q & 0 \\ 0 & 1 \end{pmatrix} 
= \begin{pmatrix} P B Q & P P^T \mathbf{x} \\ \mathbf{y}^T Q^T Q & \alpha \end{pmatrix} 
= \begin{pmatrix} A & \mathbf{x} \\ \mathbf{y}^T & \alpha \end{pmatrix} = C.
\]
Thus $C \sim D$, and the map $(\mathbf{x}, \mathbf{y}, \alpha) \mapsto (P^T\mathbf{x}, Q\mathbf{y}, \alpha)$ is a bijection between the extension spaces. The case $A = P B^T Q$ is similar, with corresponding bijection $(\mathbf{x}, \mathbf{y}, \alpha) \mapsto (Q\mathbf{y}, P^T\mathbf{x}, \alpha)$.
\end{proof}

This lemma allows us to process only one representative per equivalence class. Let $[A]$ be the image of $A$ in the map $\Omega_n \to \Omega_n / \sim$. If a representative $A$ has count $N_A$ (the size of its orbit times the number of previous parents), and produces a child $C$ with multiplicity $k$ (number of valid $(\mathbf{x}, \mathbf{y}, \alpha)$ extensions reducing to $C$), we add $k \cdot N_A$ to the count of the class of $C$. Thus we can construct $\Omega_{n+1} / \sim$ from $\Omega_n / \sim$. We detail this procedure in Algorithm \ref{alg:generation}.

\begin{algorithm}[t]
\caption{Generation of Strongly Non-Singular Representatives of Dimension $n+1$}
\label{alg:generation}
\begin{algorithmic}[1]
\Require $\Omega_n/\sim$: Set of canonical representatives of size $n \times n$ and their orbit counts $N_A$.
\Ensure $\Omega_{n+1}/\sim$: Set of canonical representatives of size $(n+1) \times (n+1)$ and counts.
\State Initialize an empty hash map $\mathcal{H}$ to store $\Omega_{n+1}/\sim$
\For{each $(A, N_A) \in \Omega_n/\sim$}
    \For{each extension $\mathbf{x}, \mathbf{y} \in \{0,1\}^n$ and $\alpha \in \{0,1\}$}
        \State $C \gets \begin{pmatrix} A & \mathbf{x} \\ \mathbf{y}^T & \alpha \end{pmatrix}$
        \If{$\det(C) \neq 0$} \Comment{Computed via Bareiss/Faddeev-Leverrier}
            \State $K \gets \Call{GetCanonical}{C}$
            \State $\mathcal{H}[K] \gets \mathcal{H}[K] + N_A$ \Comment{Aggregate counts for the equivalence class}
        \EndIf
    \EndFor
\EndFor
\State \Return $\mathcal{H}$
\end{algorithmic}
\end{algorithm}

To efficiently store and compare the equivalence classes, we require a canonical form function $\mathcal{K}: \{0,1\}^{n \times n} \to \{0,1\}^{n \times n}$ such that $A \sim B \iff \mathcal{K}(A) = \mathcal{K}(B)$. We compute this by mapping the matrix equivalence problem to the graph isomorphism problem, utilizing the individualization-refinement paradigm \cite{McKay1981, McKay2014}.

\subsection{Bipartite Graph Mapping}\label{sub:bipartite}

Let $A \in \{0,1\}^{n \times n}$ be a binary matrix. We associate $A$ with a bipartite graph $\Gamma(A) = (V, E)$, where the vertex set $V = R \cup C$ consists of row indices $R = \{r_1, \dots, r_n\}$ and column indices $C = \{c_1, \dots, c_n\}$. The edge set is defined by $E = \{ (r_i, c_j) \mid A_{ij} = 1 \}$. 

The equivalence relation $A = P B Q$ (under row and column permutations) corresponds to the isomorphism of $\Gamma(A)$ and $\Gamma(B)$ under permutations that preserve the ordered partition $(R, C)$. To account for the transpose case $A = P B^T Q$, we must also consider isomorphisms that map $R \to C$ and $C \to R$. 

We define the canonical form $\mathcal{K}(A)$ as the lexicographically minimal adjacency matrix obtained by reading the bits of the bipartite graph sequentially. To handle the transpose, we compute the canonical labeling twice: once starting with the ordered vertex partition $(R, C)$, and once with $(C, R)$. The global canonical form is the lexicographical minimum of these two results.

\begin{algorithm}[t]
\caption{Canonical Form via Individualization-Refinement}
\label{alg:canonical}
\begin{algorithmic}[1]
\Procedure{GetCanonical}{$C$}
    \State $\Gamma \gets \text{BipartiteGraph}(C)$ \Comment{Vertices $V = R \cup C$ (rows and columns)}
    \State $K_{RC} \gets \Call{SearchTree}{\Gamma, (R, C), \varnothing, 0}$
    \State $K_{CR} \gets \Call{SearchTree}{\Gamma, (C, R), \varnothing, 0}$
    \State \Return $\min(K_{RC}, K_{CR})$
\EndProcedure
\Statex
\Procedure{SearchTree}{$\Gamma, \pi, G, \text{depth}$}
    \State $\pi_{\text{eq}} \gets \Call{1-WL}{\Gamma, \pi}$
    \If{$\pi_{\text{eq}}$ is completely discrete}
        \State $M \gets \text{AdjacencyMatrix}(\Gamma, \pi_{\text{eq}})$
        \If{$M < M_{\text{best}}$}
            \State $M_{\text{best}} \gets M$
            \State $\pi_{\text{best}} \gets \pi_{\text{eq}}$
        \ElsIf{$M == M_{\text{best}}$}
            \State $\gamma \gets \text{ExtractAutomorphism}(\pi_{\text{eq}}, \pi_{\text{best}})$
            \State $G \gets \langle G, \gamma \rangle$ \Comment{Add new generator to the automorphism group}
        \EndIf
        \State \Return
    \EndIf
    \State $W \gets \text{First non-trivial cell in } \pi_{\text{eq}}$
    \State $\mathcal{O} \gets \text{Orbits}(W, G^{(\text{depth})})$ \Comment{Group vertices by current stabilizer subgroup}
    \For{each representative $v \in \mathcal{O}$} 
        \State $\pi' \gets \text{Individualize}(\pi_{\text{eq}}, v)$ \Comment{Split $v$ into its own singleton cell}
        \State \Call{SearchTree}{$\Gamma, \pi', G, \text{depth} + 1$}
    \EndFor
    \State \Return $M_{\text{best}}$
\EndProcedure
\end{algorithmic}
\end{algorithm}

\subsection{Partition Refinement (1-WL)}\label{sub:partitionrefinement}

To reduce the search space, we apply the 1-dimensional Weisfeiler-Leman (1-WL) algorithm \cite{weisfeiler1968reduction}, also known as color refinement, to transform an ordered partition $\pi$ of the vertex set into an equitable partition $\mathcal{R}(\Gamma, \pi)$.

\begin{definition}[Equitable Partition \cite{McKay1981}]
A partition $\pi$ is equitable with respect to a graph $\Gamma$ if for all cells $V_i, V_j \in \pi$ and all vertices $u, v \in V_i$, the number of neighbors of $u$ in $V_j$ equals the number of neighbors of $v$ in $V_j$.
\end{definition}

The refinement procedure $\mathcal{R}$ iteratively splits cells based on vertex degrees relative to other cells until the partition is equitable. It is a well-established result that this refinement procedure is a canonical invariant; that is, any isomorphism mapping $\Gamma(A)$ to $\Gamma(B)$ maps $\mathcal{R}(\Gamma(A), \pi)$ to $\mathcal{R}(\Gamma(B), \pi^\gamma)$ \cite{McKay1981}.

\subsection{Search Tree and Automorphism Pruning}\label{sub:searchtree}

Because equitable partitions are not necessarily discrete, refinement alone cannot distinguish all vertices. We disambiguate symmetries using a backtracking search tree $T(\Gamma, \pi)$. Each node $\nu$ in the tree represents a partition. Child nodes are generated by fixing a vertex $v$ from the first non-trivial cell and re-applying the refinement procedure $\mathcal{R}$.

To make this search computationally tractable, we employ the automorphism pruning strategy utilized by the \texttt{nauty} and \texttt{Traces} software packages \cite{McKay2014}. During the traversal of the search tree, whenever a leaf node yields an adjacency matrix identical to the current lexicographical best, an automorphism of the graph is discovered. We maintain a generating set for the automorphism group $G = \operatorname{Aut}(\Gamma)$ discovered during the search.

\begin{lemma}[Pruning Lemma \cite{McKay2014}]\label{lem:pruning-lemma}
Let $\nu_1$ and $\nu_2$ be nodes in the search tree $T$, where $\nu_1$ is visited before $\nu_2$ in the lexicographical search order. If there exists a discovered automorphism $\gamma \in G$ such that $\nu_2 = \nu_1^\gamma$, then the subtree rooted at $\nu_2$ contains no canonical candidates lexicographically smaller than those in the subtree of $\nu_1$, and may be safely discarded.
\end{lemma}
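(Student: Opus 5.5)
The plan is to make precise what the search tree looks like and how automorphisms act on it, and then show that $\gamma$ carries the whole subtree at $\nu_1$ onto the whole subtree at $\nu_2$ while preserving the adjacency matrix read off at each leaf. Recall that a node $\nu$ is identified with the sequence of individualized vertices $(v_1,\ldots,v_k)$ along the path from the root. Its partition is $\pi_\nu$, obtained from the root partition ((R,C) or (C,R)) by alternately applying the refinement $\mathcal{R}$ and individualizing $v_1,\ldots,v_k$. For $\gamma\in\mathrm{Aut}(\Gamma)$ preserving the root partition, I take $\nu^\gamma=(v_1^\gamma,\ldots,v_k^\gamma)$. Since $G$ is generated by automorphisms extracted from pairs of leaves in the same search run, every discovered $\gamma$ fixes the ordered root partition. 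So the hypothesis is exactly that the path to $\nu_2$ is the $\gamma$-image of the path to $\nu_1$.

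The first step is an equivariance statement, proved by induction on depth: $\pi_{\nu^\gamma}=\pi_\nu^\gamma$. The base case is that $\gamma$ fixes the root partition. The inductive step uses two facts. First, individualization commutes with relabeling. Second, $\mathcal{R}$ is canonical, i.e.\ $\mathcal{R}(\Gamma^\gamma,\pi^\gamma)=\mathcal{R}(\Gamma,\pi)^\gamma$, which the paper cites from \cite{McKay1981}; here $\Gamma^\gamma=\Gamma$ because $\gamma$ is an automorphism. It follows that the first non-trivial cell of $\pi_{\nu^\gamma}$ is the $\gamma$-image of the first non-trivial cell of $\pi_\nu$, since cells are ordered and $\gamma$ preserves the order. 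Hence $\nu\mapsto\nu^\gamma$ is a bijection from the children of $\nu$ onto the children of $\nu^\gamma$, and by induction it maps the subtree $T(\nu_1)$ bijectively onto $T(\nu_2)$, taking leaves to leaves.

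The second step shows that leaf values are invariant. If $\lambda$ is a leaf with discrete partition $\pi_\lambda$, then $\pi_\lambda$ labels the vertices $1,\ldots,2n$, and $\pi_{\lambda^\gamma}=\pi_\lambda^\gamma$ labels the vertex $u^\gamma$ by the label $\pi_\lambda$ gave $u$. Since $(u,w)\in E$ if and only if $(u^\gamma,w^\gamma)\in E$, the two leaves produce the same adjacency matrix $M$. So the multiset of canonical candidates in $T(\nu_2)$ equals that in $T(\nu_1)$, and in particular $T(\nu_2)$ has no candidate lexicographically smaller than the minimum over $T(\nu_1)$. Because $\nu_1$ is explored first, that minimum has already been compared against $M_{\mathrm{best}}$, so skipping $T(\nu_2)$ leaves $M_{\mathrm{best}}$, and hence $\mathcal{K}$, unchanged.

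The main obstacle I expect is the bookkeeping for automorphisms discovered \emph{during} the search. The statement only needs $\gamma\in G$ to be a genuine automorphism, which it is whenever two leaves give identical $M$. The real subtlety is the algorithm's use of orbits of the pointwise stabilizer $G^{(\mathrm{depth})}$. To justify pruning siblings $v,w$ in the same orbit, I would apply the argument with $\nu_1=(\ldots,v)$ and $\nu_2=(\ldots,w)=\nu_1^\gamma$, where $\gamma$ fixes the common prefix so that the prefixes match. This restriction to the stabilizer is precisely what guarantees the relation $\nu_2=\nu_1^\gamma$ required in the hypothesis.
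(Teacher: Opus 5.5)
The paper gives no argument of its own here, only the citation to McKay--Piperno, and your proof is the standard argument behind that citation, so it is correct: label-invariance of 1-WL, of individualization and of the first-non-trivial-cell selector gives a $\gamma$-induced isomorphism $T(\nu_1)\to T(\nu_2)$ that preserves leaf certificates. The one step to tighten is ``that minimum has already been compared against $M_{\mathrm{best}}$'', because parts of $T(\nu_1)$ may themselves have been pruned on account of nodes outside $T(\nu_1)$; a clean fix is to observe that the first leaf in search order attaining the global minimum can never lie in a pruned $T(\nu_2)$, since its $\gamma^{-1}$-image would be a leaf of $T(\nu_1)$ with the same certificate, and $T(\nu_1)$ entirely precedes $T(\nu_2)$ because $\nu_1\neq\nu_2$ have equal depth.
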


\subsection{Group-Theoretic Computations}\label{sub:group}

The efficiency of Lemma \ref{lem:pruning-lemma} relies on the ability to quickly compute orbits of the stabilizer subgroup without enumerating all group elements. Following standard computational group theory practices \cite{dixon1996permutation}, we represent the automorphism group $G$ using a stabilizer chain. 

When branching on a cell $W \in \pi$ at depth $k$ in the search tree, we compute the orbits of $W$ under the current stabilizer subgroup $G^{(k)}$. This is done efficiently using a Union-Find data structure on the Schreier generator graph \cite[Section 3.6]{dixon1996permutation}. By branching on only one representative vertex from each orbit, we drastically prune symmetric branches of the search tree. New automorphisms discovered at the leaves are integrated into the stabilizer chain in polynomial time.

\subsection{Computing Canonical Matrices}

Combining the ingredients of Subsections \ref{sub:bipartite}, \ref{sub:partitionrefinement}, \ref{sub:searchtree}, and \ref{sub:group}, we describe our procedure for computing canonical matrices in Algorithm \ref{alg:canonical}.

\begin{theorem}
The algorithm correctly computes the canonical form $\mathcal{K}(A)$, returning the exact lexicographical minimum of the equivalence class of $A$ under row permutations, column permutations, and transposition.
\end{theorem}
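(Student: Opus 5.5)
The plan is to prove two inequalities between the returned matrix $\min(K_{RC},K_{CR})$ and the true minimum $\mathcal{K}^\ast(A)$, the lexicographic minimum of the adjacency matrices of all $B\sim A$. For the inequality $\mathcal{K}^\ast(A)\le \min(K_{RC},K_{CR})$, I use that every leaf of \textsc{SearchTree} is a discrete ordered partition of $R\cup C$. When the initial partition is $(R,C)$, this partition lists all rows before all columns. Reading the adjacency matrix in this order therefore gives $PAQ$ for some permutation matrices $P,Q$. When the initial partition is $(C,R)$, the roles are swapped and the leaf gives $PA^TQ$. Either way, every matrix $M$ ever compared lies in the class of $A$, so the returned minimum is at least $\mathcal{K}^\ast(A)$.

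For the reverse inequality, I first reduce to the two fixed-side searches. The class of $A$ is the union of $\{PAQ\}$ and $\{PA^TQ\}$, so it suffices to show that $K_{RC}=\min_{P,Q} PAQ$ and $K_{CR}=\min_{P,Q}PA^TQ$. I then handle the automorphism pruning. By Lemma \ref{lem:pruning-lemma}, a subtree rooted at $\nu_2=\nu_1^\gamma$ with $\gamma\in G\le \operatorname{Aut}(\Gamma)$ is discarded only if $\gamma$ maps its leaves bijectively onto leaves of the subtree at $\nu_1$. This uses the fact, quoted in Subsection \ref{sub:partitionrefinement}, that refinement is equivariant under isomorphisms. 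Since $\gamma$ is an automorphism, corresponding leaves produce identical adjacency matrices. Hence the minimum over the pruned tree equals the minimum over the full tree $T(\Gamma,\pi)$, and pruning can be ignored.

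The main obstacle is the last step: showing that the minimum over the leaves of the full tree equals $\min_{P,Q}PAQ$. In other words, some leaf must realise the globally lex-minimal labeling $\lambda^\ast$. The danger is that refinement imposes a cell order which $\lambda^\ast$ might violate. My approach is to fix the convention by which $\mathcal{R}$ orders the pieces of a split cell. The convention is that pieces are sorted by neighbour count into the splitting cell, with smaller counts placed first; this matches zeros preceding ones in the row-major bit reading. With this convention fixed, I argue by induction along a root-to-leaf path. The inductive claim is that if $\lambda^\ast$ is consistent with the current ordered partition (each cell occupies a contiguous block of positions in $\lambda^\ast$, in the cell order), then it remains consistent after one refinement split. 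To prove it, suppose a vertex with more neighbours in the splitting cell were placed before one with fewer. Comparing the corresponding rows restricted to the splitting block, I want to show that swapping them produces a lexicographically smaller or equal matrix, contradicting minimality (choosing $\lambda^\ast$ first among ties). At an individualization step, I branch on the vertex that $\lambda^\ast$ places first in the target cell. By the inductive hypothesis it belongs to that cell, and it is a branch in the full tree. Following this path ends at a discrete leaf equal to $\lambda^\ast$.

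If the swap argument fails because refinement counts interact with earlier blocks in a way the bit order does not see, my fallback is to adjust the bit-reading order used to define ``lexicographic''. The idea is to read cells in refinement order and compare degree vectors before bits, so that the consistency induction holds by construction. The same orbit argument then finishes the proof of the statement.
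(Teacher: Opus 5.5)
\textbf{The gap: your third step is false, and so is the statement as written.}

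Your first two steps are sound. Every leaf of the $(R,C)$ search reads off some $PAQ$, and every leaf of the $(C,R)$ search reads off some $PA^TQ$. Equivariance of refinement shows that orbit pruning does not change the minimum over the leaves.

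The third step cannot be closed, because in general no leaf realises the minimum of the class. Take the strongly non-singular matrix
\[
A=\begin{pmatrix}1&0&1&0\\0&1&0&1\\0&1&1&1\\1&0&1&1\end{pmatrix}.
\]
\begin{itemize}
\item Its row sums and column sums are both $2,2,3,3$. Each degree-$2$ row (column) meets one degree-$2$ and one degree-$3$ column (row). Each degree-$3$ row (column) meets one of degree $2$ and two of degree $3$.
\item So the degree partition is already equitable, and 1-WL stops there. The two degree-$2$ rows therefore form a single cell at the root of the $(R,C)$ tree.
\item Cells of an ordered partition are contiguous and are only ever split in place. Hence the two degree-$2$ rows occupy consecutive positions in every leaf, whatever splitter order or piece-ordering convention is used. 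Likewise, the two degree-$2$ columns are consecutive rows of every leaf of the $(C,R)$ tree.
\end{itemize}

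Now compute the true minimum. Since $A$ is a row permutation of a symmetric matrix, the class is just $\{PAQ\}$. In row-major bit order, the first row must be $0011$, coming from a degree-$2$ row. The unique best second row is then the degree-$3$ row containing it, which gives $0111$. The remaining two rows are forced:
\[
M^\ast=\begin{pmatrix}0&0&1&1\\0&1&1&1\\1&1&0&0\\1&1&0&1\end{pmatrix}.
\]
Its row degrees are $2,3,2,3$, and so are its column degrees. So no leaf of either search equals $M^\ast$, and the algorithm returns a strictly larger matrix.

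In particular, your swap claim fails at the very first split, whichever of $R$ or $C$ is split first. Rows $2$ and $3$ of $M^\ast$ are out of order by degree, and exchanging them makes the matrix larger. Choosing $\lambda^\ast$ first among ties cannot help, since the interleaving is a property of the matrix $M^\ast$ itself. Your fallback of redefining the order does not rescue the statement; it replaces it with a different one.

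\textbf{How this relates to the paper, and what you can prove instead.}

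The paper's proof rests on the same unjustified step. It asserts that the tree covers all of $S_n\times S_n$ and that refinement eliminates no valid isomorphisms. But refinement does discard labelings, and the pruning theorem only preserves the minimal \emph{leaf*}. The example shows that the minimal leaf and the class minimum can differ.

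What is true, and all that Algorithm~\ref{alg:generation} needs, is a canonical-form property. Define $\mathcal{K}(A):=\min(K_{RC},K_{CR})$, the minimum over the leaves of the refinement-restricted trees. Then $\mathcal{K}(A)=\mathcal{K}(B)$ if and only if $A\sim B$. This follows from your first two steps together with isomorphism-invariance of refinement and of the target-cell rule:
\begin{itemize}
\item If $B=PAQ$, the induced isomorphism $\Gamma(A)\to\Gamma(B)$ carries each search tree of $A$ onto the corresponding tree of $B$ with identical leaf matrices.
\item If $B=PA^TQ$, it exchanges the $(R,C)$ and $(C,R)$ searches, leaving the minimum of the two unchanged.
\item Conversely, $\mathcal{K}(A)\sim A$ by your first step.
\end{itemize}
That is the statement you should prove.
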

\begin{proof}
The backtracking search tree exhaustively covers the permutation space $S_n \times S_n$. The 1-WL refinement restricts this space without eliminating valid isomorphisms \cite{McKay1981}. Furthermore, the orbit pruning strategy is mathematically guaranteed to preserve at least one path to the lexicographically minimal leaf \cite[Theorem 2.15]{McKay2014}. By executing this search for both initial partitions $(R, C)$ and $(C, R)$ and taking the minimum, the algorithm correctly accounts for the transpose equivalence, ensuring the true global minimum of the equivalence class is found.
\end{proof}

\subsection{Exact Counts for $n \le 9$}\label{sub:exact}

Tables \ref{tab:exact_counts} and \ref{tab:exact_counts_9x9} present the exact number of strongly non-singular $n \times n$ binary matrices, $N_{n,k}$, parameterized by the number of non-zero entries $k$, for dimensions $n \in \{1, \dots, 9\}$. The full dataset in JSON format is hosted at \cite{OrellanaMateo2026Repo}. We note that, by computing a representative for each equivalence class and the count of each class, one may compute any statistics about the set, not just non-zero counts. The interested user can do so simply by running the publicly available code detailed in this section and available at \cite{OrellanaMateo2026Repo}.

\begin{table}[htbp]
    \centering
    {\footnotesize
    \begin{tabular}{@{}rrr@{\hspace{5em}}rrr@{\hspace{5em}}rrr@{}}
        \toprule
       \textbf{$n$} & \textbf{$k$} & \textbf{Count} &  \textbf{$n$} &  \textbf{$k$} & \textbf{Count} &  \textbf{$n$} & \textbf{$k$} & \textbf{Count} \\ 
        \midrule
        1 & 1 & 1 & 6& 21 & 270526947& 8& 10 & 3514  \\
        2 & 2 & 1 & 6& 22 & 188703604& 8& 11 & 97244  \\
         2 & 3 & 3 & 6& 23 & 115926816 & 8& 12 & 1986607   \\
        3& 3 & 1 & 6& 24& 58314572 & 8& 13 & 31504564  \\
        3& 4 & 9 & 6& 25& 25124264 & 8& 14 & 395478728   \\
        3& 5 & 29 & 6& 26& 8586990 & 8& 15 & 3943148852  \\
        3& 6 & 19 & 6& 27& 2411996 & 8& 16 & 31369251177   \\
        3& 7 & 10 & 6& 28& 484830 & 8& 17 & 203223435056  \\
        4& 4 & 1 & 6& 29& 72648 & 8& 18 & 1084492529346  \\
        4& 5 & 18 & 6& 30 & 6484 & 8& 19 & 4890576176528  \\
        4& 6 & 143 & 6& 31 & 396 & 8& 20 & 18805194537373   \\
        4& 7 & 592 & 7& 7 & 1& 8& 21 & 63047260972722  \\
        4& 8 & 1061 & 7& 8 & 63& 8& 22 & 185050195830538  \\
        4& 9 & 1380 & 7& 9 & 1960& 8& 23 & 484312155800218  \\
        4& 10 & 961 & 7& 10 & 39830 & 8& 24 & 1130001175634995   \\
        4& 11 & 644 & 7& 11 & 585347 & 8& 25 & 2388141629501388   \\
        4& 12 & 174 & 7& 12 & 6458571& 8& 26 & 4557722772711510  \\
        4& 13 & 34 & 7& 13 & 53744776 & 8& 27 & 7957465424434324  \\
        5 & 5& 1& 7& 14 & 337996072& 8& 28 & 12664141178519307  \\
        5 & 6& 30& 7& 15 & 1675103574& 8& 29 & 18570778015470134  \\
        5 & 7& 425 & 7& 16 & 6614988082& 8& 30 & 24957086783562282  \\
        5&  8& 3640 & 7& 17 & 21743417216& 8& 31 & 31061367927548198  \\
        5& 9& 19512 & 7&  18 & 59637219644 & 8& 32 & 35567140229358595  \\
        5& 10& 62018 & 7& 19 & 141545976180& 8& 33 & 37814938253601916  \\
        5& 11& 144352 & 7& 20 & 288911603858& 8& 34 & 37074021913720550   \\
        5& 12& 232426 & 7& 21 & 521858329294 & 8& 35 & 33779665724027360  \\
        5& 13& 308685 & 7& 22 & 827024639200 & 8& 36 & 28393427281756235  \\
        5& 14& 306022  & 7& 23 & 1176966346333 & 8& 37 & 22180209800850350  \\
        5& 15& 246385  & 7& 24 & 1482186878151 & 8& 38 & 15968137418716642  \\
        5& 16& 160032  & 7& 25 & 1693609745344 & 8& 39 & 10660173316494306  \\
        5& 17& 81122  & 7& 26 & 1724441624018 & 8& 40 & 6535322648553720  \\
        5& 18& 29238 & 7& 27 &1594098556619 & 8& 41 & 3700038252393912  \\
        5& 19& 8074 & 7& 28 & 1317019173583 & 8& 42 & 1911489482762620  \\
        5& 20& 1154 & 7& 29 & 988559392212 & 8& 43 & 905530224323564  \\
        5& 21& 116 & 7& 30 & 661909038532 & 8& 44 & 388067409532546  \\
        6& 6& 1 & 7 & 31 & 400500045128 & 8& 45 & 150995283933710  \\
        6& 7& 45 & 7& 32 & 214045604384 & 8& 46 & 52554074390450  \\
        6& 8& 985 & 7& 33 & 101997188760 & 8& 47 & 16368776680422  \\
        6& 9 & 13745 &7& 34 & 42257609448 & 8& 48 & 4468830973592 \\
        6& 10 & 132987 & 7& 35 & 15337468014 & 8& 49 & 1067165274292 \\
        6& 11& 900789 & 7& 36 & 4735733876 & 8& 50 & 217193007336 \\
        6& 12& 4226503 & 7& 37 & 1234197450 & 8& 51 & 37312365232  \\
        6& 13& 15051553 & 7& 38 & 261591224 & 8& 52 & 5233480948 \\
        6& 14& 40635867 & 7& 39 & 44391892 & 8& 53 & 591886560 \\
        6& 15& 90221539 & 7& 40 & 5601004 & 8& 54 & 51707652 \\
        6& 16& 161124379 & 7& 41 & 536708 & 8& 55 & 3482368 \\
        6& 17& 247776435 & 7& 42 & 32972 & 8& 56 & 156888 \\
        6& 18& 313666787 & 7& 43 & 1352 & 8& 57 & 4616 \\
        6& 19& 351245123 & 8& 8 & 1& &  &   \\
        6& 20& 329076355 & 8& 9 & 84& &  &   \\
        \bottomrule
    \end{tabular}
   }
   \vspace{3 mm}
    \caption{The exact values of $N_{n,k}$ for $n \in \{1,\ldots,8\}$}
    \label{tab:exact_counts}
\end{table}

\begin{table}[t]
    \centering
    {\footnotesize
    \begin{tabular}{@{}rr@{\hspace{5em}}rr@{\hspace{5em}}rr@{}}
        \toprule
        \textbf{$k$} & \textbf{Count} & \textbf{$k$} & \textbf{Count} & \textbf{$k$} & \textbf{Count} \\ 
        \midrule
        9 & 1 & 31 & 96719476982407744092 & 53 & 83333465510680978826 \\
        10 & 108 & 32 & 186503724910597960072 & 54 & 40330856830106979690 \\
        11 & 5838 & 33 & 335422734825487477468 & 55 & 18193563381572837212 \\
        12 & 210252 & 34 & 562215751430485988940 & 56 & 7606822625464081816 \\
        13 & 5652213 & 35 & 882789394906258039156 & 57 & 2945819219466818342 \\
        14 & 120044988 & 36 & 1296704229926641416276 & 58 & 1049968464418967194 \\
        15 & 2072228096 & 37 & 1790188101242490383464 & 59 & 343639220977425084 \\
        16 & 29407465584 & 38 & 2318475316973368470040 & 60 & 102439451887121968 \\
        17 & 344097864320 & 39 & 2828243308508161667208 & 61 & 27695978794734600 \\
        18 & 3333236753396 & 40 & 3243012884297962327568 & 62 & 6722470912110816 \\
        19 & 27016651978056 & 41 & 3507731778243932969769 & 63 & 1455146825624512 \\
        20 & 184889958771428 & 42 & 3570249044441509661900 & 64 & 277089814762980 \\
        21 & 1085361104090754 & 43 & 3430496716635991967290 & 65 & 45966172670268 \\
        22 & 5517513084920674 & 44 & 3103130069080774429632 & 66 & 6526518924368 \\
        23 & 24656256324744968 & 45 & 2649888111684432089913 & 67 & 783702562020 \\
        24 & 97542620062118552 & 46 & 2130133085694111554620 & 68 & 77872212440 \\
        25 & 345944420050724384 & 47 & 1615543159590023404340 & 69 & 6345678472 \\
        26 & 1104797971319879050 & 48 & 1152109135371983825440 & 70 & 409749896 \\
        27 & 3209822652195236944 & 49 & 774135602738705469776 & 71 & 20609608 \\
        28 & 8504988786860401576 & 50 & 488154989553138214988 & 72 & 711944 \\
        29 & 20726783710640352922 & 51 & 289298664178677485460 & 73 & 15760 \\
        30 & 46495824680544165784 & 52 & 160449071628568146100 & & \\
        \bottomrule
    \end{tabular}
   }
      \vspace{3 mm}
    \caption{The exact values of $N_{n,k}$ for $n =9$}
    \label{tab:exact_counts_9x9}
\end{table}

\section{An Upper Bound for Strongly Non-Singular $\{0,1\}$-Matrices}\label{sec:experiments}

Recall $N_n(p) =  \mathbb{P}\left[M_n(\mathrm{Ber}(p)) \text{ is strongly non-singular}\right]$ and, using Tables \ref{tab:exact_counts} and \ref{tab:exact_counts_9x9}, is exactly computable for $n \le 9$ (see Figure \ref{fig:exact_Nnp}). In particular, 
\[
N_n(p)= \sum_{k=0}^{n^2} N_{n,k} p^k (1-p)^{n^2-k},
\]
where $N_{n,k}$ is the number of $n\times n$ strongly non-singular $\{0,1\}$-matrices with exactly $k$ ones. For example, the total number of $9\times9$ strongly non-singular $\{0,1\}$-matrices is
\[
\sum_{k=9}^{73} N_{9,k} = 36,646,054,311,185,413,881,216,
\]
and therefore 
\[N_{9}(1/2) = \frac{36,646,054,311,185,413,881,216}{2^{81}} \approx 0.015156.\]
Here, using the data $N_{n,k}$ for $n = 9$ from Table \ref{tab:exact_counts_9x9}, we produce an upper bound for $N_n(p)$ for $n > 9$.

\begin{theorem}\label{thm:upper}
Let $n > 9$ and $p \in (0,1)$. Define $q = 1-p$, $\alpha = \min\{p,q\}$, and $\beta = \max\{p,q\}$. Then
\[N_n(p) \le \sum_{w=9}^{73} N_{9,w} p^{w}q^{81-w} \prod_{k \in T_p} \left[1 - \big(2q^k -q^{2k-1} +2 \big((p^2 + q^2) \alpha^{k-10} - q^k \beta^{k-10} \big) g(w) - \beta^{2k-19} g(w)^2\big) \right],\]
where $g(w) = 9 p^{\frac{w}{9}} q^{9-\frac{w}{9}}$ and
\[T_p = \left\{ k \in \mathbb{N} \,:\, 10\le k \le n,\; (p^2 + q^2) \alpha^{k-10} \ge q^k \beta^{k-10} + (p + 8 \alpha) \beta^{2k-11} \right\}.\]
\end{theorem}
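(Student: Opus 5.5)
The plan is to write strong non-singularity of $M_n := M_n(\mathrm{Ber}(p))$ as a chain of conditional events and to lower bound, at each step $k \in T_p$, the probability that the newly added row and column create an \emph{elementary} singularity. Let $A_k$ be the leading $k \times k$ submatrix. Then
\[
N_n(p) = \sum_{A \in \Omega_9} \mathbb{P}[A_9 = A] \prod_{k=10}^{n} \mathbb{P}\big[\Delta_k \ne 0 \,\big|\, A_{k-1} \in \Omega_{k-1},\, A_9 = A\big].
\]
For $k \notin T_p$ I bound the factor by $1$. Grouping the $9\times 9$ bases by their number $w$ of ones gives the prefactor $N_{9,w}p^wq^{81-w}$. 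It then suffices to show that, for every fixed $A_{k-1} \in \Omega_{k-1}$ extending a base with $w$ ones,
\[
\mathbb{P}[\Delta_k = 0 \mid A_{k-1}] \ge 2q^k - q^{2k-1} + 2\big((p^2+q^2)\alpha^{k-10} - q^k\beta^{k-10}\big)g(w) - \beta^{2k-19}g(w)^2 .
\]
The key point is that, given $A_{k-1}$, the new column $\bm{x}\in\{0,1\}^{k-1}$, new row $\bm{y}$ and corner $\gamma$ are fresh independent $\mathrm{Ber}(p)$ entries. So the conditioning on earlier minors only affects the \emph{fixed} rows of $A_{k-1}$, and any bound that is uniform over those rows survives the conditioning.

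The singular events I use are the following. $Z_r$: row $k$ of $A_k$ is zero, with probability $q^k$. $Z_c$: column $k$ is zero, also $q^k$, and $\mathbb{P}[Z_r\cap Z_c]=q^{2k-1}$. $D^r_i$ for $i\in[9]$: row $k$ of $A_k$ equals row $i$ of $A_k$. Symmetrically $D^c_j$ for columns. For $D^r_i$, the first $9$ entries of $\bm{y}$ must match row $i$ of the base, which has $r_i$ ones; this happens with probability $p^{r_i}q^{9-r_i}$. Entries $10,\dots,k-1$ must match fixed bits, each with probability at least $\alpha$. The last requirement is $\gamma = x_i$, two fresh independent bits agreeing, with probability $p^2+q^2$. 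Hence
\[
\mathbb{P}[D^r_i] \ge (p^2+q^2)\alpha^{k-10}p^{r_i}q^{9-r_i}.
\]
Summing over $i$ and applying AM--GM with $\prod_i p^{r_i}q^{9-r_i}=p^wq^{81-w}$ gives $\sum_i \mathbb{P}[D^r_i] \ge (p^2+q^2)\alpha^{k-10}g(w)$, and the same bound holds for columns. Bonferroni then gives
\[
\mathbb{P}\Big[Z_r\cup Z_c\cup \textstyle\bigcup_i D^r_i\cup\bigcup_j D^c_j\Big] \ge \mathbb{P}[Z_r\cup Z_c] + \sum\mathbb{P}[D] - \sum\mathbb{P}[Z\cap D] - \sum\mathbb{P}[D\cap D'] .
\]

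Next I upper bound the overlap terms, replacing every unconstrained factor by $\beta$. The event $Z_r \cap D^r_i$ forces $\bm{y}=0$, which has probability $q^{k-1}$, together with row $i$ of the base being zero in its first $9$ coordinates. Since the base is strongly non-singular such rows are rare, but I simply charge $q^k\beta^{k-10}$ times the base-matching weight and sum to get $q^k\beta^{k-10}g(w)$. This is where I must be careful that the $g(w)$ obtained from AM--GM is used consistently as an upper bound and as a lower bound; I would try to arrange the pairing so that the same sum $\sum_i p^{r_i}q^{9-r_i}$ appears in both, and only afterwards replace it by $g(w)$. Cross terms such as $Z_r\cap D^c_j$ also need bounding, and I would absorb them there as well. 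For $D\cap D'$, the pairs row/row, row/column and column/column each fix roughly $2k-19$ free coordinates beyond the two base matchings, which gives the bound $\beta^{2k-19}g(w)^2$.

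Writing $S=\sum_i p^{r_i}q^{9-r_i}$, the resulting lower bound is a quadratic in $S$ that is increasing in $S$ precisely when the linear coefficient dominates. The membership condition defining $T_p$, namely $(p^2+q^2)\alpha^{k-10} \ge q^k\beta^{k-10}+(p+8\alpha)\beta^{2k-11}$, is what I expect guarantees this monotonicity on the admissible range of $S$ (with $S\le 9$ or similar). That monotonicity is what licenses replacing $S$ by its minimum $g(w)$.

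I expect the main obstacle to be exactly this step: bookkeeping the overlap terms so that they are genuinely dominated by $q^k\beta^{k-10}S$ and $\beta^{2k-19}S^2$, and verifying that the $T_p$ condition yields monotonicity in $S$. The $(p+8\alpha)\beta^{2k-11}$ term suggests that the derivative of the $S^2$ term should be bounded using $S\le p+8\alpha$ times a power of $\beta$, possibly because a strongly non-singular base has at most one row that can match heavily. I would first try bounding $\partial_S$ of the quadratic on $[g(w),\,\cdot\,]$ using such a bound on $S$.
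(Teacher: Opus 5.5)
Your framework is the paper's: condition on the $9\times 9$ base, lower bound the probability of a zero or repeated new row/column by second-order inclusion--exclusion, pass to $g(w)$ by AM--GM (the paper's Jensen step), and use $T_p$ for monotonicity. Conditioning on $A_{k-1}\in\Omega_{k-1}$ instead of the paper's $\mathcal E_{10}^c,\dots,\mathcal E_{k-1}^c$ is fine. However, the overlap bookkeeping you flag as the main obstacle does not give the stated inequality as you wrote it.

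\textbf{Same-type overlaps are empty.} The base is non-singular, so its rows are non-zero and pairwise distinct. Hence $Z_r,D^r_1,\dots,D^r_9$ are pairwise disjoint, and likewise $Z_c,D^c_1,\dots,D^c_9$. For example, $Z_r\cap D^r_i$ forces row $i$ of the base to vanish; it is impossible, not rare. The only non-zero pairwise intersections are therefore row--column ones. Writing $R=\sum_i p^{r_i}q^{9-r_i}$ and $C=\sum_j p^{c_j}q^{9-c_j}$, these give
\[
\mathbb{P}[Z_r\cap Z_c]=q^{2k-1},\qquad \sum_j\mathbb{P}[Z_r\cap D^c_j]+\sum_i\mathbb{P}[D^r_i\cap Z_c]\le q^k\beta^{k-10}(C+R),\qquad \sum_{i,j}\mathbb{P}[D^r_i\cap D^c_j]\le \beta^{2k-19}RC .
\]
Your plan instead charges $q^k\beta^{k-10}R$ to $Z_r\cap D^r_i$ and then also absorbs the cross terms $Z_r\cap D^c_j$. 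That subtracts $2q^k\beta^{k-10}(R+C)$. You also put row/row and column/column pairs into the $\beta^{2k-19}$ term. Neither matches the target bound.

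\textbf{Two sums, not one.} Since $R\neq C$ in general, the lower bound is the bilinear function
\[
F_k(R,C)=2q^k-q^{2k-1}+a_k(R+C)-\beta^{2k-19}RC,\qquad a_k=(p^2+q^2)\alpha^{k-10}-q^k\beta^{k-10},
\]
not a quadratic in a single $S$. You may replace $R,C$ by $g(w)$ only after establishing monotonicity.

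\textbf{The monotonicity needs an upper bound on $R,C$ that you only guessed.} Since $\partial_x F_k=a_k-\beta^{2k-19}y$, you need $a_k\ge \beta^{2k-19}\sup C$. The crude bound $C\le 9\beta^9$ (or ``$S\le 9$'') requires $a_k\ge 9\beta^{2k-10}$. This is strictly stronger than the $T_p$ condition when $p\ne q$, so it does not cover all $k\in T_p$. The bound matching $T_p$ exactly is
\[
R,C\le W:=(p+8\alpha)\beta^8,\qquad\text{since}\qquad \beta^{2k-19}W=(p+8\alpha)\beta^{2k-11}.
\]
It holds for the following reasons.
\begin{itemize}
\item If $p\le q$, then $t\mapsto p^tq^{9-t}$ is decreasing and every row of the base is non-zero, so $R\le 9pq^8$.
\item If $p\ge q$, the map is increasing, and distinct rows allow at most one all-ones row, so $R\le p^9+8p^8q$.
\item Both values equal $W$, and the same argument applies to $C$.
\end{itemize}
Hence for $k\in T_p$, $F_k$ is non-decreasing in each variable on $[0,W]^2$. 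Combined with $g(w)\le R,C\le W$, this yields $F_k(R,C)\ge F_k(g(w),g(w))$, which is exactly the factor in the theorem.
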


\begin{proof}
Let $M:= M_n(\mathrm{Ber}(p))$ and let $M_k$ denote the leading $k \times k$ submatrix of $M$. To compute an upper bound for $N_n(p)$ for $n > 9$, we condition on the realization of $M_9$, and utilize data regarding its non-zero entries. We produce an upper bound for the weaker condition that $M_9$ is strongly non-singular, $M$ has no zero columns or zero rows, and that none of the first nine rows and columns of $M$ are repeated. Let $S$ be the set of all $9\times 9$ strongly non-singular matrices, $\bm{r}_k$ and $\bm{c}_k$ be the $k^{th}$ row and $k^{th}$ column, respectively, of $M_k$, and \[\mathcal{E}_k = \{ \bm{r}_k = \bm{0}\} \cup \{ \bm{c}_k = \bm{0}\}\cup \bigcup_{i=1}^9 \{ \bm{r}_k = \bm{r}_i\} \cup \bigcup_{i=1}^9 \{ \bm{c}_k = \bm{c}_i\}. \]

For a fixed $A\in S$, strong non-singularity implies $\mathcal E_k^c$ for every $k\ge10$. Therefore,
\[
\mathbb{P}\left[M\text{ is strongly non-singular}\mid M_9=A\right] \le \prod_{k=10}^n \left(1-\mathbb{P}\left[\mathcal E_k \mid M_9=A,\mathcal E_{10}^c,\ldots,\mathcal E_{k-1}^c \right] \right)
\]
Consequently,
\[
N_n(p) \le \sum_{A \in S} \mathbb{P}[M_9 = A] \prod_{k = 10}^n \left(
1- \mathbb{P}\left[ \mathcal{E}_k \mid M_9 = A, \mathcal{E}_{10}^c,\ldots, \mathcal{E}_{k-1}^c \right] \right)
\]
and so it suffices to lower bound
\[
\mathbb{P}\left[ \mathcal{E}_k \mid M_9 = A, \mathcal{E}_{10}^c,\ldots, \mathcal{E}_{k-1}^c \right]
\]
We do so by inclusion-exclusion, and note that no three of the events $\{ \bm{r}_k = \bm{0}\}$, $\{ \bm{c}_k = \bm{0}\}$, $\{ \bm{r}_k = \bm{r}_i\}$, $i = 1,\ldots, 9$, and  $\{ \bm{c}_k = \bm{c}_i\}$, $i = 1,\ldots,9$ can occur simultaneously, as $A$ has distinct non-zero rows and columns, and $\bm{c}_k$ and $\bm{r}_k$ can each take only one value. In addition, events $\{\bm{r}_k = \bm{r}_i\}$ and $\{\bm{r}_k = \bm{r}_j\}$ are disjoint for $i,j \in \{1,\ldots,9\}$, $i \ne j$, as $A$ has independent rows (and similarly for column events). Let $ \mathcal{\hat E}_{k}$ be the event $M_9 = A$ and $\mathcal{E}_{10}^c,\ldots,\mathcal{E}_{k}^c$. Let $\rho_i$ and $\sigma_i$ denote the number of non-zero entries in row and column $i$ of $A$, respectively. Then
\begin{align*}
\mathbb{P}\left[\bm{r}_k = \bm{0} \, | \, \mathcal{\hat E}_{k-1}\right] &= \mathbb{P}\left[\bm{c}_k = \bm{0} \, | \, \mathcal{\hat E}_{k-1}\right] = q^k, \\
\mathbb{P}\left[\bm{r}_k = \bm{r}_i \, | \, \mathcal{\hat E}_{k-1}\right] &\ge (p^2 + q^2) \alpha^{k-10} p^{\rho_i} q^{9-\rho_i}, \\
\mathbb{P}\left[\bm{c}_k = \bm{c}_i \, | \, \mathcal{\hat E}_{k-1}\right] &\ge (p^2 + q^2) \alpha^{k-10} p^{\sigma_i} q^{9-\sigma_i},\\
\mathbb{P}\left[\bm{r}_k = \bm{0}, \bm{c}_k = \bm{0} \, | \, \mathcal{\hat E}_{k-1}\right] &= q^{2k-1}, \\
\mathbb{P}\left[\bm{r}_k = \bm{0}, \bm{c}_k = \bm{c}_i \, | \, \mathcal{\hat E}_{k-1}\right] &\le q^{k} \beta^{k-10} p^{\sigma_i} q^{9-\sigma_i}, \\
\mathbb{P}\left[\bm{r}_k = \bm{r}_i, \bm{c}_k = \bm{0} \, | \, \mathcal{\hat E}_{k-1}\right] &\le q^{k} \beta^{k-10} p^{\rho_i} q^{9-\rho_i}, \\
\mathbb{P}\left[\bm{r}_k = \bm{r}_i, \bm{c}_k = \bm{c}_j \, | \, \mathcal{\hat E}_{k-1}\right] &\le \beta^{2k-19} p^{\rho_i + \sigma_j} q^{18-\rho_i -\sigma_j},
\end{align*}
where our inequalities are the result of the following analysis:
\begin{itemize}
\item $\{\bm{r}_k = \bm{r}_i \}$ ($\{\bm{c}_k = \bm{c}_i \}$ is similar): At step $k$, the target row $\bm{r}_i$ consists of the fixed $i^{th}$ row of $A$, a continuation in the $10^{th}$ to $(k-1)^{th}$ entries, and the new entry $M_{i,k}$. The new row $\bm{r}_k$ matches $\bm{r}_i$ if the first nine entries of $\bm{r}_k$ match the $i^{th}$ row of $A$, a probability $p^{\rho_i} q^{9-\rho_i}$ event, the middle $k-10$ entries match, at least an $\alpha^{k-10}$ probability event, and the last entry matches, a probability $p^2 + q^2$ event.
\item  $\{\bm{r}_k = \bm{0}, \bm{c}_k = \bm{c}_i\}$ ($\{\bm{r}_k = \bm{r}_i, \bm{c}_k = \bm{0}\}$ is similar): The event $\{\bm{r}_k = \bm{0}\}$ has probability $q^{k}$, and, conditioning on $M_{k,i} = M_{kk} = 0$, the probability of $\{\bm{c}_k = \bm{c}_i\}$ is at most $p^{\sigma_i}q^{9-\sigma_i} \beta^{k-10}$.
\item$\{\bm{r}_k = \bm{r}_i, \bm{c}_k = \bm{c}_j\}$: Both events holding requires $M_{k,k} = A_{i,j}$, a probability at most $\beta$ event. The first nine entries of $\bm{r}_k$ and $\bm{c}_k$ match $ \bm{r}_i$ and $ \bm{c}_j$ with probability $p^{\rho_i + \sigma_j} q^{18-\rho_i-\sigma_j}$. The remaining $k-10$ entries of $\bm{r}_k$ and $\bm{c}_k$ match $ \bm{r}_i$ and $ \bm{c}_j$ with probability at most $\beta^{2(k-10)}$.
\end{itemize}
Now, let $R(\rho) = \sum_{i=1}^9 p^{\rho_i} q^{9-\rho_i}$ and $C(\sigma) = \sum_{i=1}^9 p^{\sigma_i} q^{9-\sigma_i}$. We obtain the lower bound
\[
    \mathbb{P}[ \mathcal{E}_k \, | \, \mathcal{\hat E}_{k-1}] \ge 2q^k +(p^2 + q^2) \alpha^{k-10} \left(R(\rho)+C(\sigma)\right) -q^{2k-1}  - q^k \beta^{k-10}\left(R(\rho) +C(\sigma)\right) - \beta^{2k-19} R(\rho) C(\sigma)
\]

Note that $R(\rho)$ and $C(\sigma)$ are both upper bounded by $W:=(p + 8 \alpha) \beta^8$. To see this, let $f(t)=p^tq^{9-t}$. If $p\le q$, then $f$ is decreasing, and since every row of $A$ is nonzero, $\rho_i\ge1$ for all $i$, so
\[R(\rho)=\sum_{i=1}^9 f(\rho_i)\le 9f(1)=9pq^8=(p+8\alpha)\beta^8.\]
If $p\ge q$, then $f$ is increasing, and since the rows of $A$ are pairwise distinct, at most one row can have $\rho_i=9$ while the remaining eight rows have $\rho_i\le8$, so
\[R(\rho)\le f(9)+8f(8)=p^9+8p^8q=(p+8\alpha)\beta^8.\]
The same argument applies to $C(\sigma)$. Let
\[F_k(x,y):=2q^k-q^{2k-1}+\big((p^2+q^2)\alpha^{k-10}-q^k\beta^{k-10}\big)(x+y)-\beta^{2k-19}xy.\]
The lower bound above says
\[\mathbb{P}\left[\mathcal E_k\mid \mathcal{\hat E}_{k-1}\right] \ge F_k(R(\rho),C(\sigma)).\]
If $k\in T_p$, then the defining inequality for $T_p$ gives
\[(p^2+q^2)\alpha^{k-10}-q^k\beta^{k-10} \ge (p+8\alpha)\beta^{2k-11}=\beta^{2k-19}W.\]
Thus, for $0\le x,y\le W$,
\begin{align*}
    \frac{\partial F_k}{\partial x}(x,y)&=\big((p^2+q^2)\alpha^{k-10}-q^k\beta^{k-10}\big)-\beta^{2k-19}y\ge0, \\
    \frac{\partial F_k}{\partial y}(x,y)&=\big((p^2+q^2)\alpha^{k-10}-q^k\beta^{k-10}\big)-\beta^{2k-19}x\ge0.
\end{align*}
Hence $F_k$ is non-decreasing in each variable on $[0,W]^2$.

Now let $w:=\mathrm{nnz}(A)$. The function $f(r)=p^r q^{9-r}$ is convex, and $\rho_1+\cdots+\rho_9=w$; therefore Jensen's inequality gives $R(\rho)\ge g(w)$. Similarly, since $\sigma_1+\cdots+\sigma_9=w$, we have $C(\sigma)\ge g(w)$. Because $R(\rho),C(\sigma)\le W$ and $F_k$ is nondecreasing in each variable for $k\in T_p$, we obtain
\[\mathbb{P}\left[\mathcal E_k\mid \mathcal{\hat E}_{k-1}\right] \ge F_k(g(w),g(w)),\]
i.e.
\[\mathbb{P}\left[\mathcal E_k\mid \mathcal{\hat E}_{k-1}\right] \ge 2q^k -q^{2k-1} +2\big((p^2 + q^2) \alpha^{k-10} - q^k \beta^{k-10}\big) g(w) -\beta^{2k-19}g(w)^2.\]
Since each factor $1-\mathbb{P}[\mathcal E_k\mid \mathcal{\hat E}_{k-1}]$ lies in $[0,1]$, omitting the factors with $k\notin T_p$ can only increase the product. Grouping the matrices $A\in S$ by their number $w$ of non-zero entries gives the stated bound.\end{proof}

We plot the upper bound of Theorem \ref{thm:upper}  and experimental results for $n = 30$ in Figure \ref{fig:upper_exp}. The above bound simplifies significantly for $p = 1/2$, the case of greatest interest.

\begin{corollary}\label{cor:1/2}
Let $n >9$. Then
\[N_n(1/2) \le \frac{N_9}{2^{81}} \prod_{k = 10}^n \left(1 - \frac{20}{2^{k}} +\frac{200}{2^{2k}}\right).\]
\end{corollary}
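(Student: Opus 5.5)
Corollary \ref{cor:1/2} is the specialization of Theorem \ref{thm:upper} to $p=q=\alpha=\beta=1/2$, so the plan is to substitute and simplify each ingredient in turn.

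\emph{Step 1: the function $g$.} With $p=q=1/2$,
\[
g(w)=9\cdot 2^{-w/9}2^{-(9-w/9)}=9\cdot 2^{-9},
\]
which does not depend on $w$. Hence the product over $k\in T_{1/2}$ can be pulled out of the sum over $w$. The remaining sum is
\[
\sum_{w} N_{9,w}\,2^{-81}=\frac{N_9}{2^{81}}.
\]

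\emph{Step 2: the bracket in Theorem \ref{thm:upper}.} We have $p^2+q^2=1/2$, so
\[
(p^2+q^2)\alpha^{k-10}=2^{-(k-9)}=\frac{2^9}{2^k}, \qquad q^k\beta^{k-10}=2^{-(2k-10)}=\frac{2^{10}}{2^{2k}}.
\]
For the other terms,
\[
2q^k=\frac{2}{2^k}, \qquad q^{2k-1}=\frac{2}{2^{2k}}, \qquad \beta^{2k-19}=\frac{2^{19}}{2^{2k}}.
\]
Multiplying by $g=9\cdot 2^{-9}$:
\begin{itemize}
\item the term $2\,(2^9/2^k)\,g$ becomes $18/2^k$;
\item the term $-2\,(2^{10}/2^{2k})\,g$ becomes $-36/2^{2k}$;
\item the term $-\beta^{2k-19}g^2=-(2^{19}/2^{2k})\cdot 81\cdot 2^{-18}$ becomes $-162/2^{2k}$.
\end{itemize}
Summing all contributions,
\[
\frac{20}{2^k}-\frac{2+36+162}{2^{2k}}=\frac{20}{2^k}-\frac{200}{2^{2k}},
\]
and so each factor is $1-\frac{20}{2^k}+\frac{200}{2^{2k}}$, as claimed.

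\emph{Step 3: the index set $T_{1/2}$.} This is the only point needing a genuine check, namely that $T_{1/2}=\{10,\dots,n\}$. With $p+8\alpha=9/2$ and $\beta^{2k-11}=2^{11}/2^{2k}$, the defining inequality of $T_p$ reads
\[
\frac{2^9}{2^k}\ge\frac{2^{10}}{2^{2k}}+\frac{9\cdot 2^{10}}{2^{2k}}=\frac{10\cdot 2^{10}}{2^{2k}},
\]
that is, $2^k\ge 20$. This holds for every $k\ge 10$.

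If instead $T_{1/2}$ had missed some indices, we could still fall back on the remark at the end of the proof of Theorem \ref{thm:upper}: omitting factors with values in $[0,1]$ only enlarges the bound. That remark does not, however, let us include extra factors. We must also confirm that each included factor lies in $[0,1]$. This holds because
\[
1-\frac{20}{2^k}+\frac{200}{2^{2k}}\in(0,1) \quad\text{for } k\ge 10,
\]
since $2^k>10$ there.
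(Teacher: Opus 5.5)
Your proposal is correct and follows the paper's proof: specialize Theorem~\ref{thm:upper} to $p=q=\alpha=\beta=1/2$, note that $g(w)=9\cdot 2^{-9}$ does not depend on $w$ (so the sum collapses to $N_9/2^{81}$), and simplify each factor to $1-20/2^k+200/2^{2k}$. Your check that the $T_p$ condition reduces to $2^k\ge 20$ fills in a step the paper only asserts, and your closing remark that each factor lies in $[0,1]$ is harmless but unnecessary, since $T_{1/2}=\{10,\dots,n\}$ exactly.
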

\begin{proof}
When $p = 1/2$, $g(w) = 9 \times 2^{-9}$ for every $w$, and $T_p = \{10,\ldots,n\}$. Because $p = q = \alpha = \beta = 1/2$, each factor in the product simplifies as
\[1 - \big(2q^k -q^{2k-1} +2 \big((p^2 + q^2) \alpha^{k-10} - q^k \beta^{k-10} \big) g(w) - \beta^{2k-19} g(w)^2\big)=1 - \frac{20}{2^{k}} +\frac{200}{2^{2k}}.\]
\end{proof}

For $n = 30$ (the dimension used in Figure \ref{fig:upper_exp}), Corollary \ref{cor:1/2} gives the reported upper bound of approximately $1.45\ldots \%$.

\section*{Acknowledgements}

This project was conducted as part of the MIT Summer Research Program (MSRP) during the summer of 2025. The authors thank Louisa Thomas for improving the style of presentation, and Anna Brandenburger and Byron Chin for their insightful discussions. We would like to thank Mehtaab Sawhney for his suggestions regarding proof techniques for Corollary \ref{corr:main} and Dmitrii Zakharov for pointing us to Elkes-Szab{\'o} type theorems, which proved invaluable for proving Lemma \ref{lm:det_grid}. Dmitrii also suggested that an alternate technique, involving the Szemerédi–Trotter theorem, may be possible. This material is based upon work supported by the National Science Foundation under grant no. DMS-2513687.

{ \small 
	\bibliographystyle{plain}
	\bibliography{main.bib}
    
}

\end{document}